\newcommand{\cP}{\mathcal{P}}
\newcommand{\N}{\mathbb N}
\newcommand{\R}{\mathbb R}
\renewcommand{\E}{\mathbb E}
\DeclareMathOperator{\Tr}{Tr}
\DeclareMathOperator{\NC}{NC}
\DeclareMathOperator{\PS}{S}
\begin{document}
\title*{Corrections to Classical Matrix Ensemble Moments, Non-Crossing Annular Pairings, and Ribbon Graphs}
\titlerunning{Ribbon Graphs and Non-Crossing Annular Pairings}
\author{Anas A. Rahman, Daniel Munoz George, and James A. Mingo}
\institute{Anas A. Rahman \at Department of Mathematics, The University of Hong Kong, Pok Fu Lam, Hong Kong, \email{aarahman@hku.hk}
\and Daniel Munoz George \at Department of Mathematics, The University of Hong Kong, Pok Fu Lam, Hong Kong, \email{danielmg@hku.hk}
\and James A. Mingo \at Department of Mathematics and Statistics, Jeffery Hall, Queen's University, Kingston, ON K7L 3N6, Canada,  \email{james.mingo@queensu.ca}}
%
%
\maketitle

\abstract{
We elucidate a bijection between ribbon graphs on the real projective plane and non-crossing annular pairings that relate to the $1/N$ correction term of the GOE and LOE spectral moments. We also derive analogous objects for the $1/N^2$ correction terms of said moments and their equivalents for the GUE and LUE.
}

\section{Introduction}

Since its inception, random matrix theory has steadily gained attention due to its \textit{unreasonable effectiveness} in studying heavy atom spectra \cite{Wig55,Dys62b,Wig67}, wireless communication channels \cite{TV04}, random quantum states \cite{CN16}, log-gases \cite{Fo10}, and quantum transport \cite{Be97} (to give a non-exhaustive list), with a recent surge in interest stemming from advances in the fields of high-dimensional data and machine learning. The most famous result in the field is undoubtedly due to the pioneering work \cite{Wig55,Wig58} of Wigner, who showed that in the limit of large matrix size and with appropriate scaling, the limiting distribution of the eigenvalues of so-called Wigner matrices is the celebrated \textit{Wigner semicircle law},
\begin{equation}
\rho(x) = \frac{1}{2\pi}\sqrt{4-x^2}.
\end{equation}
To be precise, let $X$ be a real symmetric or complex Hermitian $N\times N$ random matrix with independently distributed entries $X_{ij}$ ($1\le i\le j\le N$)  of mean zero, identical variance for $i=j$, variance $\E|X_{ij}|^2=1$ for $i\ne j$, and all other moments having upper bound independent of $i,j$. Then, $X$ is said to be a Wigner matrix and it is precisely the eigenvalues of $\widetilde{X}:=\frac{1}{\sqrt{N}}X$ that follow the above semicircular distribution in the large $N$ limit.

The seminal result of Wigner recounted above was obtained through the \textit{method of moments}, where one uses the fact that the normalised moments
\begin{equation*}
\widetilde{m}_n=\int_{\R}x^n\rho_N(x)\,\mathrm{d}x\quad(n=0,1,2,\ldots)
\end{equation*}
of the density $\rho_N(x)$ of the eigenvalues of $\widetilde{X}$ fully characterise $\rho_N(x)$ when Carleman's condition \cite[Sec.~88]{Wal48} is satisfied. Wigner studied said moments by noting that they are are given by the averages with respect to the distribution of $\widetilde{X}$,
\begin{equation*}
\widetilde{m}_n=\frac{1}{N}\E\Tr\widetilde{X}^n,
\end{equation*}
which he then showed via combinatorial arguments to be given in the large $N$ limit by the $(n/2)\textsuperscript{th}$ Catalan number when $n$ is even (the moments vanish for odd $n$ by symmetry) --- these characterise the Wigner semicircle law, hence $\rho_N(x)$ converges weakly to $\rho(x)$. Since Wigner's application of the method of moments, moments of random matrices of the form $\E\Tr M^n$ have been interpreted as counts of graphs in a variety of ways, with the physics community largely using diagrammatic theories \textit{\`a la} Feynman to simplify matrix integrals \cite{Hoo74,BIPZ78,BIZ80,YK83,VWZ84}, while combinatorialists and algebraic geometers have dually used matrix integrals to enumerate graphs \cite{HZ86,Kon92,GJ97,GHJ01}. In particular, when matrix entries are simple sums and products of Gaussian variables, one has access to the Isserlis--Wick theorem \cite{Iss18,Wic50}, which can be used to show that the moments $\E\Tr M^n$ are given by genus expansions in $1/N$ whose coefficients are counts of ribbon graphs. We review this connection in \S\ref{s1.1} for the Gaussian and Laguerre orthogonal and unitary ensembles.

Moving onto another perspective, it has been shown in recent years that the infinitesimal moments, equivalently $1/N$ correction to the spectral moments, of the Gaussian orthogonal ensemble (GOE) enumerates a certain class of non-crossing annular pairings \cite{mingo2019}. In this paper we show this can be extended to the $N^{-2}$ terms in the genus expansion of the GOE and the GUE, despite the difficulty in enumeration of genus $1$ permutations (see Cori and Hetyei \cite{ch14}).  We expect this extension to be relevant to the problem of finding the correction to Fevrier's definition of higher order infinitesimal freeness; according to the definition of \cite{fev12}, independent GUEs are not asymptotically infinitesimally free of higher order as it can be seen from the tables of Harer and Zagier \cite{HZ86} that $\kappa_4'' = 1$, whereas infinitesimal freeness of second order requires that $\kappa_4''= 0$. Moreover, the recursion of \cite{HZ86} still lacks an elementary topological rationale. There is now evidence that  higher order freeness \cite{collins2007second} is necessary for both said rationale and the correction to Fevrier's higher order infinitesimal freeness, and this theory is based on non-crossing permutations on a multi-annulus. These are exactly the objects of consideration in the present paper. 

The goal of this paper is to highlight the relationship between ribbon graphs and non-crossing annular pairings in the context of low order corrections to the spectral moments of the Gaussian and Laguerre orthogonal and unitary ensembles. Thus, in the remainder of this introduction, we review the relevant theory of ribbon graphs and non-crossing annular pairings. Then, in \S\ref{s2.1}, we give a bijection between the ribbon graphs and non-crossing annular pairings enumerated by the infinitesimal moments of the GOE, while in \S\ref{s2.2} and \S\ref{s2.3}, we derive non-crossing annular pairings corresponding to the $1/N^2$ corrections to the spectral moments of the Gaussian unitary ensemble (GUE) and GOE, respectively. We give analogous arguments for the corresponding Laguerre ensembles (LOE and LUE) in Section \ref{s3}.

From the topological viewpoint, one motivation for studying these new classes of non-crossing annular pairings is that they give a tractable, planar classification of the corresponding ribbon graphs. In particular, we give planar descriptions of the ribbon graphs that can be embedded into the torus, real projective plane, and Klein bottle. These ribbon graphs are special because, by the classification theorem for closed surfaces, every other non-planar ribbon graph involved in the computation of the moments of interest can be constructed by gluing a finite number of torii to them --- we leave it to future work to understand how gluing torii to ribbon graphs should be interpreted in the setting of non-crossing annular pairings.

\subsection{Matrix Moments and Ribbon Graphs} \label{s1.1}

We begin by introducing the random matrix ensembles studied in this paper, which are constructed from the real and complex Ginibre ensembles.

\begin{definition}[GOE, GUE, LOE, and LUE]
The $M\times N$ real and complex \textit{Ginibre ensembles} \cite{Gin65} are respectively the sets of $M\times N$ real ($\beta=1$) and complex ($\beta=2$) matrices with probability density functions (p.d.f.s)
\begin{equation*}
P(G)=\pi^{-MN\beta/2}\exp\left(-\Tr\,G^\dagger G\right)=\prod_{i=1}^M\prod_{j=1}^N\pi^{-\beta/2}\exp\left(-|G_{i,j}|^2\right)
\end{equation*}
defined with respect to the Lebesgue measure
\begin{equation*}
\mathrm{d}G=\prod_{i=1}^M\prod_{j=1}^N\prod_{s=1}^\beta\mathrm{d}G_{i,j}^{(s)},
\end{equation*}
where we write $G_{i,j}^{(1)}=\mathrm{Re}(G_{i,j})$ and $G_{i,j}^{(2)}=\mathrm{Im}(G_{i,j})$. We say that
\begin{itemize}
\item $H=\tfrac{1}{2}(G^{\intercal}+G)$ belongs to the $N\times N$ \textit{Gaussian orthogonal ensemble} when $G$ is drawn from the $N\times N$ real Ginibre ensemble;
\item $H=\tfrac{1}{2}(G^\dagger+G)$ belongs to the $N\times N$ \textit{Gaussian unitary ensemble} when $G$ is drawn from the $N\times N$ complex Ginibre ensemble;
\item $W=G^{\intercal}G$ belongs to the $(M,N)$ \textit{Laguerre orthogonal ensemble} when $G$ is drawn from the $M\times N$ real Ginibre ensemble;
\item $W=G^\dagger G$ belongs to the $(M,N)$ \textit{Laguerre unitary ensemble} when $G$ is drawn from the $M\times N$ complex Ginibre ensemble.
\end{itemize}
\end{definition}

\begin{remark} \label{rmk1}
The entries of real Ginibre matrices are independent centred normal variables of variance $1/2$. For $G$ a complex Ginibre matrix, the entries are independent complex normal variables such that $\E[G_{i,j}^2]=0$, but $\E[G_{i,j}\overline{G}_{i,j}]=1$. 
\end{remark}

The key property of the ensembles defined above is that the entries of Ginibre matrices are Gaussian, so for $M$ drawn from one of said ensembles, the moments $\E\Tr M^n$ can be calculated through the Isserlis--Wick theorem \cite{Iss18,Wic50}.

\begin{theorem}[Isserlis--Wick]
For $n\in\N$, let $x_1,\ldots,x_n$ be centred normal variables. Then, $\E[x_1\cdots x_n]$ vanishes for $n$ odd, while for $n$ even, it decomposes as
\begin{equation}
\E[x_1\cdots x_n] = \sum_{\pi\in \cP_2(n)}\prod_{\{u,v\}\in\pi}\E[x_u x_v],
\end{equation}
where $\cP_2(n)$ denotes the set of pairings of $[n]:=\{1,\ldots,n\}$, equivalently the partitions $\pi$ of $[n]$ whose blocks $\{u,v\}\in\pi$ are all of size two (see \S\ref{s1.2}).
\end{theorem}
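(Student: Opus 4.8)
The plan is to reduce the statement to a computation with the joint moment generating function, then extract the pairings by a coefficient count (alternatively, one can set up a recursion via Gaussian integration by parts; see the end). Here the variables $x_1,\ldots,x_n$ are understood to be jointly Gaussian, as is the case for the matrix entries and their linear combinations considered in this paper. Write $\Sigma_{uv}:=\E[x_ux_v]$ for the covariance, and recall the standard fact that a centred jointly Gaussian vector $x=(x_1,\ldots,x_n)$ satisfies
\begin{equation*}
\E\!\left[e^{\langle t,x\rangle}\right]=\exp\!\left(\tfrac12\langle t,\Sigma t\rangle\right),\qquad \langle t,\Sigma t\rangle=\sum_{u,v}\Sigma_{uv}t_ut_v,
\end{equation*}
for all $t=(t_1,\ldots,t_n)\in\R^n$, valid whether or not $\Sigma$ is invertible (it is the characteristic function, analytically continued, or is obtained by a limiting argument from the nondegenerate case). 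Granting that all mixed moments are finite and that one may differentiate under the expectation — both justified by the Gaussian tail and dominated convergence — we get
\begin{equation*}
\E[x_1\cdots x_n]=\left.\frac{\partial^n}{\partial t_1\cdots\partial t_n}\right|_{t=0}\exp\!\left(\tfrac12\langle t,\Sigma t\rangle\right).
\end{equation*}

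Next I would expand the right-hand side as a power series. Writing $Q(t):=\langle t,\Sigma t\rangle$, a homogeneous quadratic, we have $\exp(Q(t)/2)=\sum_{k\ge0}\frac{1}{2^kk!}Q(t)^k$, with $Q(t)^k$ homogeneous of degree $2k$. Applying $\partial_{t_1}\cdots\partial_{t_n}$ and setting $t=0$ annihilates every homogeneous component except the one of degree exactly $n$ (lower-degree components are over-differentiated, higher-degree ones vanish at $t=0$), so only the term $k=n/2$ can survive; this is already impossible for $n$ odd, which gives the vanishing of odd moments (alternatively, from the symmetry $x\mapsto-x$ of the law). For $n$ even I would expand $Q(t)^{n/2}=\sum\Sigma_{u_1v_1}\cdots\Sigma_{u_{n/2}v_{n/2}}\,t_{u_1}t_{v_1}\cdots t_{u_{n/2}}t_{v_{n/2}}$ over all indices (the expansion of a product of $n/2$ copies of the sum) and extract the coefficient of the squarefree monomial $t_1t_2\cdots t_n$ — the only degree-$n$ monomial not killed by $\partial_{t_1}\cdots\partial_{t_n}$, on which that operator evaluates to $1$.

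The main step is the bookkeeping. A term of $Q(t)^{n/2}$ contributes to the coefficient of $t_1\cdots t_n$ precisely when the ordered list of ordered pairs $\big((u_1,v_1),\ldots,(u_{n/2},v_{n/2})\big)$ is an ordering-and-orientation of a genuine pair partition $\pi\in\cP_2(n)$; each fixed $\pi$ arises from exactly $2^{n/2}\,(n/2)!$ such lists ($2^{n/2}$ for the internal orientation of its $n/2$ blocks, $(n/2)!$ for their order), and each contributes $\prod_{\{u,v\}\in\pi}\Sigma_{uv}$ since $\Sigma$ is symmetric. Multiplying by the prefactor $\frac{1}{2^{n/2}(n/2)!}$ cancels this multiplicity exactly, leaving $\E[x_1\cdots x_n]=\sum_{\pi\in\cP_2(n)}\prod_{\{u,v\}\in\pi}\E[x_ux_v]$, as claimed. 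The only genuine obstacle is the analytic preamble — finiteness and smoothness of the moment generating function and the legitimacy of differentiating under the integral sign — dispatched by the standard Gaussian moment bounds; everything else is the multiplicity count $2^{n/2}(n/2)!$ just described.

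Alternatively, and closer to the inductive spirit of the rest of the paper, one can use the Gaussian integration-by-parts (Stein) identity $\E[x_1g(x)]=\sum_{j=1}^n\E[x_1x_j]\,\E[\partial_{x_j}g(x)]$, valid for centred jointly Gaussian $x$ and smooth $g$ of moderate growth. Taking $g(x)=x_2\cdots x_n$ yields
\begin{equation*}
\E[x_1x_2\cdots x_n]=\sum_{j=2}^n\E[x_1x_j]\,\E\!\left[x_2\cdots x_{j-1}x_{j+1}\cdots x_n\right],
\end{equation*}
which is exactly the recursion obtained by conditioning on the block of $\pi$ containing $1$, with base cases the empty product ($n=0$, value $1$) and $\E[x_1x_2]$ ($n=2$); an easy induction on $n$ — the odd case collapsing since $\cP_2(n)$ is then empty — closes the argument. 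Proving Stein's identity is itself a one-dimensional integration by parts after conditioning on the part of $\mathrm{span}(x_1,\ldots,x_n)$ orthogonal to $x_1$, so this route has the same analytic core as the first.
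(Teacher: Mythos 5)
Your proof is correct. Note, though, that the paper does not prove this statement at all: it quotes the Isserlis--Wick theorem as a classical result, citing Isserlis and Wick, and immediately puts it to use in the genus expansions, so there is no internal proof to compare against. Both of your routes are standard and complete: the moment-generating-function argument (with the multiplicity count $2^{n/2}(n/2)!$ cancelling the prefactor of the $k=n/2$ term, and the parity argument disposing of odd $n$) and the Stein/integration-by-parts recursion $\E[x_1\cdots x_n]=\sum_{j\ge2}\E[x_1x_j]\,\E[x_2\cdots\widehat{x_j}\cdots x_n]$, which is exactly the ``pair off the element $1$'' decomposition of $\cP_2(n)$ and is the form most closely aligned with how the theorem is actually deployed in the paper's moment computations. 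Your explicit remark that the $x_i$ must be \emph{jointly} Gaussian is a welcome precision the paper's statement leaves implicit; the only small addendum worth recording is that the paper also applies the formula to complex GUE and complex Ginibre entries, and the complex case follows from your real case by multilinearity of both sides after writing each entry as a linear combination of jointly Gaussian real variables.
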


We now give a brief overview of the ribbon graphs relevant to this paper;  refer to \cite[Sec.~3.3]{Rah22} and references therein for details. We begin with the GUE.

\subsubsection*{GUE Moments and Orientable Ribbon Graphs}
Fix $n\in\mathbb{N}$ and let $H$ be an $N\times N$ GUE matrix. As the entries of $H$ are centred normal variables, the Isserlis--Wick theorem says that $\E\Tr H^n$ vanishes for $n$ odd, while for $n$ even, we have that
\begin{align}
\E\Tr H^n&=\sum_{i_1,\ldots,i_n=1}^N\E\left[H_{i_1,i_2}H_{i_2,i_3}\cdots H_{i_n,i_1}\right] \nonumber
\\&=\sum_{i_1,\ldots,i_n=1}^N\sum_{\pi\in\cP_2(n)}\prod_{\{u,v\}\in\pi}\E\left[H_{i_u,i_{u+1}}H_{i_v,i_{v+1}}\right], \label{eq3}
\end{align}
where we define $i_{n+1}\equiv i_1$. Now, we have by Remark \ref{rmk1} that
\begin{equation} \label{eq4}
\E\left[H_{i_u,i_{u+1}}H_{i_v,i_{v+1}}\right]=\tfrac{1}{2}\chi_{i_u=i_{v+1}}\chi_{i_v=i_{u+1}},
\end{equation}
where the indicator function $\chi_A$ equals one when $A$ is true and vanishes otherwise. Thus, interchanging the order of summation in equation \eqref{eq3} above shows that
\begin{equation} \label{eq5}
\E\Tr H^n=2^{-n/2}\sum_{\pi\in\cP_2(n)}w(\pi),
\end{equation}
where we define the weight of a pairing $\pi$ to be
\begin{equation} \label{eq6}
w(\pi)=\sum_{i_1,\ldots,i_n=1}^N\prod_{\{u,v\}\in\pi}\chi_{i_u=i_{v+1}}\chi_{i_v=i_{u+1}}.
\end{equation}
The product of indicator functions in the above vanishes unless certain summation indices $i_1,\ldots,i_n$ are identified, in which case it equals one. Thus, $w(\pi)$ is a sum of unity over the indices that are unconstrained by the indicator functions and is hence given by $N^V$, where $V$ is the number of such free summation indices.

It is at this point that ribbon graphs present themselves as a convenient method for computing $w(\pi)$. We proceed by drawing an $n$-gon to represent $\Tr H^n$, with the $u\textsuperscript{th}$ side labelled clockwise by $H_{i_u,i_{u+1}}$. Then, the vertices of this $n$-gon inherit a clockwise labelling by the indices $i_1,\ldots,i_n$. We then represent a pairing $\pi$ by connecting edges labelled by $H_{i_u,i_{u+1}}$ and $H_{i_v,i_{v+1}}$ by an untwisted ribbon whenever $\{u,v\}\in\pi$, with the result being called a ribbon graph. Note that the ribbons identify vertices of our $n$-gon in compliance with the indicator functions present in $w(\pi)$ so that $w(\pi)=N^V$, where $V$ is now the number of unique vertices of our $n$-gon after the ribbon-identification process. It is equivalent to say that $V$ is the number of boundaries of the ribbon graph representing $\pi$. See figures \ref{fig1a}--\ref{fig1c} for an example.
\begin{figure}
    \centering
    \begin{subfigure}{0.32\textwidth}
        \centering
        \captionsetup{justification=centering}
        \includegraphics[width=0.7\textwidth]{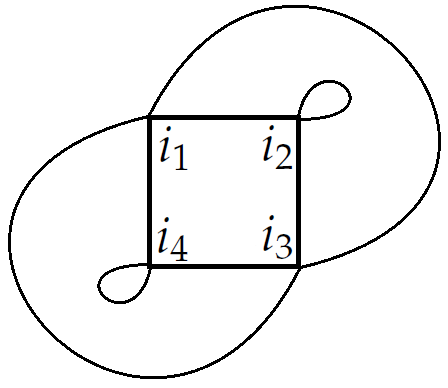}
        \caption{$i_1\equiv i_3$} \label{fig1a}
    \end{subfigure}\hfill
    \begin{subfigure}{0.32\textwidth}
        \centering
        \captionsetup{justification=centering}
        \includegraphics[width=0.7\textwidth]{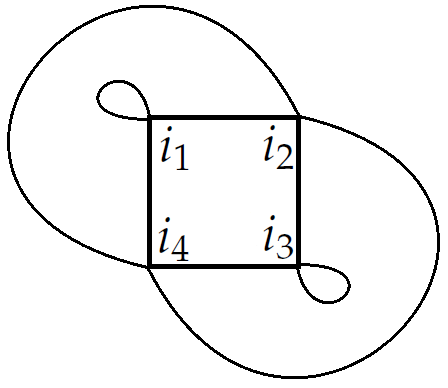}
        \caption{$i_2\equiv i_4$}
    \end{subfigure}\hfill
    \begin{subfigure}{0.32\textwidth}
        \centering
        \captionsetup{justification=centering}
        \includegraphics[width=0.7\textwidth]{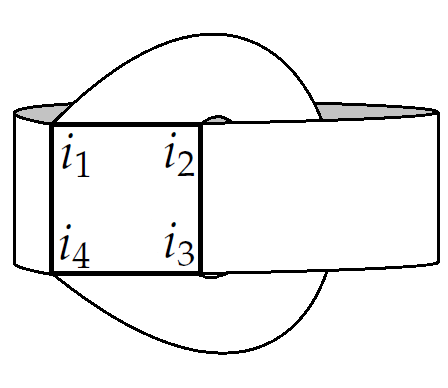}
        \caption{$i_1\equiv i_2\equiv i_3\equiv i_4$} \label{fig1c}
    \end{subfigure}
    \caption{These ribbon graphs encode how one must identify summation indices when computing $\E\Tr H^4$ for the GUE. Their weights are respectively $N^3$, $N^3$, and $N$.}
\end{figure}

After the ribbon-identification process described above, our $n$-gon is homeomorphic to an orientable manifold with cell decomposition having one unique face, $n/2$ unique edges, and $V$ unique vertices. Thus, by Euler's formula, its Euler characteristic is $\chi=F-E+V=1-n/2+V$. The Euler characteristic is also given by $\chi=2-2g$, where $g\in\N$ is the genus of our ribbon graph. Thus, we see that $V=n/2+1-2g$ and may rewrite equation \eqref{eq5} as a genus expansion.

\begin{proposition}[GUE moments] \label{prop1}
Let $n\in\mathbb{N}$ be even and define $a_g(n)$ to be the set of orientable ribbon graphs of genus $g$ built from an $n$-gon. Then, for $H$ an $N\times N$ GUE matrix,
\begin{equation}
m_n^{(\mathrm{GUE})}:=\E\Tr H^n=\left(\frac{N}{2}\right)^{n/2}\sum_{g=0}^{\lfloor n/4\rfloor}N^{1-2g}|a_g(n)|.
\end{equation}
\end{proposition}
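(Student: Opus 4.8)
The plan is to start from the identity \eqref{eq5}, which the preceding discussion has already established, and reorganise the sum over pairings according to the genus of the associated ribbon graph. All of the analytic work is done: Isserlis--Wick together with the covariance \eqref{eq4} has reduced $\E\Tr H^n$ to $2^{-n/2}\sum_{\pi\in\cP_2(n)}w(\pi)$, and $w(\pi)$ has been identified with $N^{V}$, where $V$ is the number of distinct vertices of the $n$-gon after the ribbons prescribed by $\pi$ are attached. What remains is bookkeeping: (i) recognise $\pi\mapsto(\text{the ribbon graph of }\pi)$ as a bijection, (ii) express $V$ through the genus, and (iii) determine the range of the genus.

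For (i), an element of $\cP_2(n)$ is precisely the combinatorial data needed to attach $n/2$ untwisted ribbons to the sides of a fixed $n$-gon whose sides carry the labels $H_{i_1,i_2},\dots,H_{i_n,i_1}$, so $\pi\mapsto(\text{the ribbon graph of }\pi)$ is a bijection from $\cP_2(n)$ onto $\bigcup_{g\ge 0}a_g(n)$; write $g(\pi)$ for the genus of its ribbon graph. For (ii), I would verify the topological assertion underlying Euler's formula: after the gluing, the $n$-gon becomes a closed orientable surface with a CW structure having a single $2$-cell (the polygon), $n/2$ edges (the glued sides), and $V$ vertices (the glued corners). Orientability is exactly what makes the ribbons untwisted, and this is where the Hermitian structure of $H$ enters through the covariance \eqref{eq4}: sides $u$ and $v$ are glued with opposite induced orientations along the boundary circle, i.e.\ via an orientation-preserving identification. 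Granting this, $\chi=F-E+V=1-n/2+V$ while also $\chi=2-2g$, so $V=n/2+1-2g$ and hence $w(\pi)=N^{\,n/2+1-2g(\pi)}$.

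Substituting into \eqref{eq5} and collecting pairings according to genus then gives
\begin{equation*}
\E\Tr H^n=2^{-n/2}\sum_{\pi\in\cP_2(n)}N^{\,n/2+1-2g(\pi)}=\Bigl(\tfrac{N}{2}\Bigr)^{n/2}\sum_{g\ge 0}N^{\,1-2g}\,|a_g(n)|.
\end{equation*}
It remains only to truncate the sum. Since the glued polygon has at least one vertex, $V\ge 1$, that is $n/2+1-2g\ge 1$, forcing $g\le n/4$; as $g\in\N$ this gives $a_g(n)=\varnothing$ unless $0\le g\le\lfloor n/4\rfloor$, which is the stated formula.

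I do not expect a genuine obstacle, the text having prepared everything; the one step that calls for care rather than computation is the topological verification in the second paragraph --- that attaching untwisted ribbons to the polygon really does yield a closed orientable surface with the indicated cell decomposition, so that Euler's formula applies and the exponent $V=n/2+1-2g$ is correct.
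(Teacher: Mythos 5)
Your proposal is correct and follows essentially the same route as the paper: it reduces $\E\Tr H^n$ via Isserlis--Wick and the covariance \eqref{eq4} to the weighted sum \eqref{eq5}, identifies $w(\pi)=N^V$ with $V$ the number of boundaries of the ribbon graph of $\pi$, and applies Euler's formula with $\chi=2-2g$ to get $V=n/2+1-2g$ before collecting terms by genus. The only addition beyond the paper's derivation is your explicit justification of the truncation at $g=\lfloor n/4\rfloor$ via $V\ge 1$, which the paper leaves implicit.
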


\begin{remark}
As per Wigner's semicircle law, $|a_0(n)|$ is the $(n/2)\textsuperscript{th}$ Catalan number.
\end{remark}

\subsubsection*{GOE Moments and Locally Orientable Ribbon Graphs}
Most of the above arguments follow identically in the GOE case. The first point of difference is that equation \eqref{eq4} must be replaced with 
\begin{equation*}
\E\left[H_{i_u,i_{u+1}}H_{i_v,i_{v+1}}\right]=\tfrac{1}{4}\left(\chi_{i_u=i_{v+1}}\chi_{i_v=i_{u+1}}+\chi_{i_u=i_v}\chi_{i_{u+1}=i_{v+1}}\right)
\end{equation*}
so that the GOE analogues of equations \eqref{eq5} and \eqref{eq6} are
\begin{align}
\E\Tr H^n&=2^{-n}\sum_{\pi\in\cP_2(n)}w(\pi),
\\ w(\pi)&=\sum_{i_1,\ldots,i_n=1}^N\prod_{\{u,v\}\in\pi}\left(\chi_{i_u=i_{v+1}}\chi_{i_v=i_{u+1}}+\chi_{i_u=i_v}\chi_{i_{u+1}=i_{v+1}}\right).
\end{align}
We yet again compute $w(\pi)$ by constructing ribbon graphs as before. This time, the term $\chi_{i_u=i_{v+1}}\chi_{i_v=i_{u+1}}$ is represented by an untwisted ribbon connecting $H_{i_u,i_{u+1}}$ to $H_{i_v,i_{v+1}}$, but the term $\chi_{i_u=i_v}\chi_{i_{u+1}=i_{v+1}}$ represents the same ribbon twisted once in the fashion of a M\"obius band. The genus expansion follows as before, but we must now consider the Euler genus $k$, which relates to the Euler characteristic of a non-orientable ribbon graph according to $\chi=2-k$. See Figure \ref{fig2} for examples.
\begin{figure}
    \centering
    \includegraphics[width=0.8\linewidth]{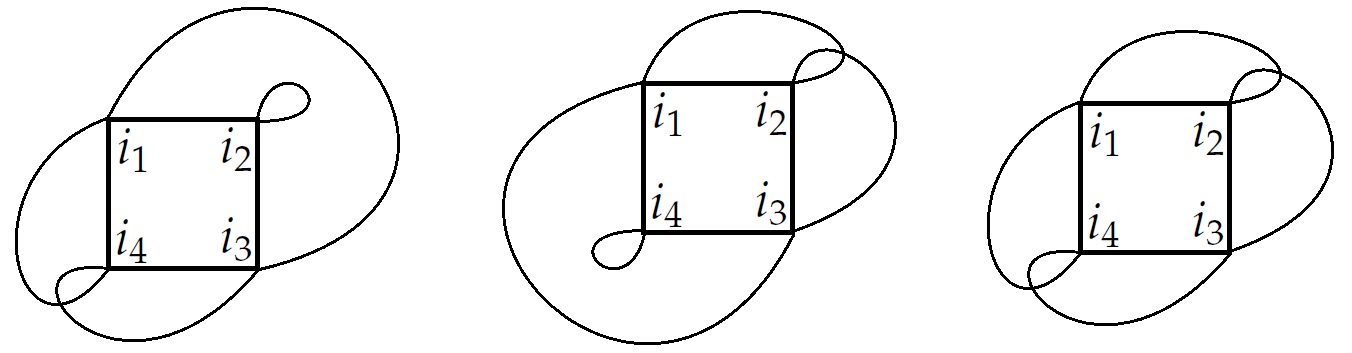}
    \caption{These ribbon graphs contribute to $\E\Tr H^4$ for the GOE. They are respectively of Euler genus $1$, $1$, and $2$.}
    \label{fig2}
\end{figure}
\begin{proposition}[GOE moments] \label{prop2}
Let $n\in\N$ be even and define $b_k(n)$ to be the set of non-orientable ribbon graphs of Euler genus $k$ built from an $n$-gon. Then, for $H$ an $N\times N$ GOE matrix,
\begin{equation}
m_n^{(\mathrm{GOE})}:=\E\Tr H^n=\left(\frac{N}{4}\right)^{n/2}\sum_{k=0}^{n/2}N^{1-k}\left(|a_{k/2}(n)|\chi_{k/2\in\N}+|b_k(n)|\right).
\end{equation}
\end{proposition}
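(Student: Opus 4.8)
The plan is to mirror, essentially verbatim, the derivation of Proposition \ref{prop1}, the only genuinely new input being a bookkeeping of twists and of orientability. The excerpt has already reduced us to $\E\Tr H^n = 2^{-n}\sum_{\pi\in\cP_2(n)}w(\pi)$ with $w(\pi)=\sum_{i_1,\ldots,i_n=1}^N\prod_{\{u,v\}\in\pi}(\chi_{i_u=i_{v+1}}\chi_{i_v=i_{u+1}}+\chi_{i_u=i_v}\chi_{i_{u+1}=i_{v+1}})$. First I would expand the product over the $n/2$ blocks of $\pi$: each of the $2^{n/2}$ resulting terms amounts to choosing, for every block $\{u,v\}\in\pi$, either the ``untwisted'' summand $\chi_{i_u=i_{v+1}}\chi_{i_v=i_{u+1}}$ or the ``twisted'' summand $\chi_{i_u=i_v}\chi_{i_{u+1}=i_{v+1}}$, i.e.\ to decorating the corresponding ribbon in the diagram of $\pi$ as an ordinary band or as a M\"obius band. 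Exactly as in the GUE case, each such decorated diagram identifies the vertices of the $n$-gon precisely as dictated by its indicator functions, hence contributes $N^{V}$ to $w(\pi)$, where $V$ is the number of boundary components of the resulting ribbon graph; so $w(\pi)$ is the sum of $N^{V}$ over all $2^{n/2}$ ribbon graphs supported on $\pi$.

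Next I would run the Euler-characteristic computation as before, now permitting non-orientable surfaces. A ribbon graph built from an $n$-gon has $F=1$ face and $E=n/2$ edges, so $\chi = 1 - n/2 + V$; writing $\chi = 2 - k$ with $k$ the Euler genus (so that $k = 2g$ when the surface is orientable of genus $g$), we obtain $V = n/2 + 1 - k$. Summing over $\pi$ and over all decorations therefore gives $\sum_{\pi}w(\pi) = \sum_{k\ge 0} N^{n/2+1-k}\, r_k(n)$, where $r_k(n)$ denotes the total number of ribbon graphs of Euler genus $k$ built from an $n$-gon. The constraint $V\ge 1$ forces $k\le n/2$, which yields the stated summation range, and $r_k(n)=0$ for $k<0$.

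The one step going beyond the GUE argument is the identification $r_k(n)=|a_{k/2}(n)|\,\chi_{k/2\in\N}+|b_k(n)|$, for which I would split $r_k(n)$ into its orientable and non-orientable parts. The non-orientable ribbon graphs of Euler genus $k$ are $|b_k(n)|$ in number by definition. For the orientable part I would use the fact that a ribbon graph built from a single polygon is orientable if and only if all of its ribbons are untwisted: if every ribbon is untwisted, the chosen orientation of the $n$-gon propagates consistently across every band (this is precisely the GUE situation of Proposition \ref{prop1}), whereas a single M\"obius band produces a closed path that leaves the $n$-gon, crosses that band once, and returns through the unique (hence connected) face, and such a path reverses orientation. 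Consequently the orientable ribbon graphs from an $n$-gon are exactly the pairings decorated entirely by untwisted ribbons, i.e.\ the objects already counted by $|a_g(n)|$ in Proposition \ref{prop1}, and such a ribbon graph of genus $g$ has Euler genus $k=2g$; hence the orientable part of $r_k(n)$ equals $|a_{k/2}(n)|$ when $k$ is even and is empty when $k$ is odd. Substituting $r_k(n)=|a_{k/2}(n)|\chi_{k/2\in\N}+|b_k(n)|$ and using $2^{-n}N^{n/2+1-k}=(N/4)^{n/2}N^{1-k}$ then yields the asserted formula. The main obstacle is precisely this orientability dichotomy, together with the care needed to see that ``orientable ribbon graph built from an $n$-gon'' coincides with the set $a_g(n)$ already introduced; the rest is a direct transcription of the GUE computation.
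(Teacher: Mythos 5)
Your proposal is correct and follows essentially the same route as the paper, which derives Proposition \ref{prop2} by replacing the GUE covariance with the two-term GOE covariance, interpreting the two summands as untwisted versus M\"obius-twisted ribbons, and rerunning the Euler-characteristic count with $\chi=2-k$. Your extra care in justifying the orientability dichotomy (a one-vertex ribbon graph is orientable iff every ribbon is untwisted, so the orientable part of the count is exactly $|a_{k/2}(n)|$ for even $k$) is a point the paper leaves implicit, and your bookkeeping of the prefactor $2^{-n}N^{n/2+1-k}=(N/4)^{n/2}N^{1-k}$ matches the stated formula.
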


\subsubsection*{LUE and LOE Moments and Bipartite Ribbon Graphs}
Moving onto the Laguerre ensembles, let us first consider $W=G^\dagger G$ drawn from the $(M,N)$ LUE so that $G$ is an $M\times N$ complex Ginibre matrix. Then, the LUE analogue of equation \eqref{eq3} is
\begin{align*}
\E\Tr W^n&=\sum_{i_1,\ldots,i_n=1}^N\sum_{j_1,\ldots,j_n=1}^M\E\left[G_{i_1,j_1}^\dagger G_{j_1,i_2}\cdots G_{i_n,j_n}^\dagger G_{j_n,i_1}\right]
\\&=\sum_{i_1,\ldots,i_n=1}^N\sum_{j_1,\ldots,j_n=1}^M\sum_{\pi\in \PS_n}\prod_{u=1}^n\E\left[\overline{G}_{j_u,i_u}G_{j_{\pi(u)},i_{\pi(u)+1}}\right],
\end{align*}
where we recall that $i_{n+1}\equiv i_1$ and write $\PS_n$ for the set of permutations on $[n]$. The equivalents of equations \eqref{eq5} and \eqref{eq6} are then
\begin{align}
\E\Tr W^n&=\sum_{\pi\in \PS_n}w(\pi),
\\ w(\pi)&=\sum_{i_1,\ldots,i_n=1}^N\sum_{j_1,\ldots,j_n=1}^M\prod_{u=1}^n\chi_{i_u=i_{\pi(u)+1}}\chi_{j_u=j_{\pi(u)}}.
\end{align}
To compute $w(\pi)$, we construct a ribbon graph representation of $\pi$ by drawing a $2n$-gon with vertices labelled clockwise by the summation indices $i_1,j_1,i_2,j_2,\ldots,i_n,j_n$ and then connect edges pairwise with untwisted ribbons such that vertices labelled by $i$-indices can only be identified with each other and likewise for those labelled by $j$-indices.
\begin{figure}
    \centering
    \includegraphics[width=0.7\linewidth]{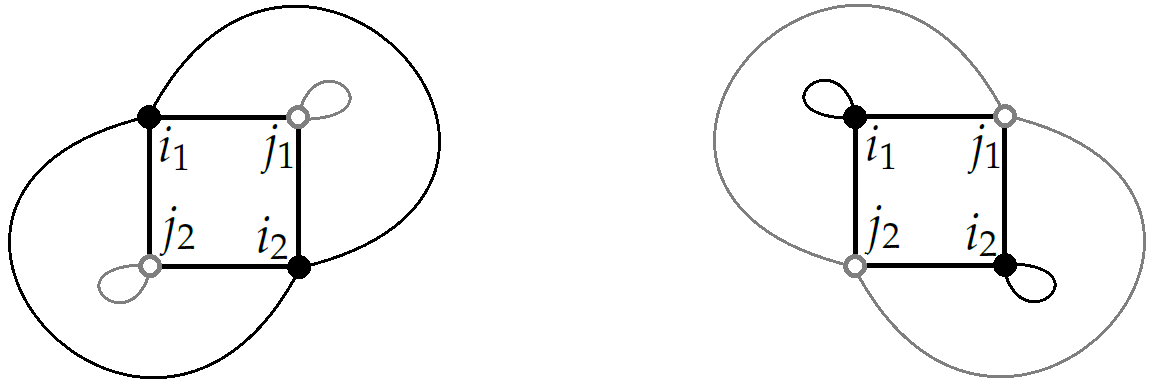}
    \caption{These ribbon graphs contribute to $\E\Tr W^2$ for the LUE. Assigning the colour black (white) to vertices labelled by $i_1,i_2$ ($j_1,j_2$) and letting $M=cN$, we see that these graphs are respectively weighted $NM^2=c^2N^3$ and $N^2M=cN^3$.}
    \label{fig3}
\end{figure}
\begin{proposition}[LUE moments] \label{prop3}
Let $n\in\N$ and consider a $(2n)$-gon with vertices alternately coloured black and white. Define $\widetilde{a}_{g,p}(n)$ to be the set of orientable ribbon graphs of genus $g$ built from said $(2n)$-gon that have $p$ white boundaries. Then, taking $M=cN$ for simplicity, the moments of a matrix $W$ drawn from the $(M,N)$ LUE admit the genus expansion
\begin{equation}
m_n^{(\mathrm{LUE})}:=\E\Tr W^n=N^n\sum_{g=0}^{\lfloor n/2\rfloor}\sum_{p=1}^nc^pN^{1-2g}|\widetilde{a}_{g,p}(n)|.
\end{equation}
\end{proposition}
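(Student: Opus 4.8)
The plan is to run the same argument used for the GUE (Proposition~\ref{prop1}), but now tracking the two vertex colours separately. Starting from the identities already established, namely
$$\E\Tr W^n=\sum_{\pi\in \PS_n}w(\pi),\qquad w(\pi)=\sum_{i_1,\ldots,i_n=1}^N\ \sum_{j_1,\ldots,j_n=1}^M\ \prod_{u=1}^n\chi_{i_u=i_{\pi(u)+1}}\,\chi_{j_u=j_{\pi(u)}},$$
I would first note that the indicator functions identify certain $i$-indices with one another and, separately, certain $j$-indices with one another, but never an $i$-index with a $j$-index. Hence $w(\pi)$ is a sum of ones over the unconstrained indices and equals $N^{V_i}M^{V_j}$, where $V_i$ (resp.\ $V_j$) is the number of surviving distinct $i$-indices (resp.\ $j$-indices) — equivalently the number of black (resp.\ white) boundaries of the ribbon graph attached to $\pi$ by the construction already described. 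Substituting $M=cN$ gives $w(\pi)=c^{V_j}N^{V_i+V_j}$.

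Next I would do the topological bookkeeping. The ribbon graph built from the $(2n)$-gon has a single face, and its $2n$ sides are glued in $n$ pairs, so it has $n$ edges. Since every ribbon is untwisted and the gluing matches the $i$-endpoint (resp.\ $j$-endpoint) of a side to the $i$-endpoint (resp.\ $j$-endpoint) of its partner, the resulting closed surface is orientable and its vertices carry a well-defined two-colouring. Writing $V=V_i+V_j$ for the total number of vertices, Euler's formula gives $\chi=1-n+V=2-2g$, so $V=n+1-2g$, and therefore $w(\pi)=c^{V_j}N^{n+1-2g}$.

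Finally I would reorganise the sum over $\PS_n$ by the genus $g$ and the number $p:=V_j$ of white boundaries. The map $\pi\mapsto(\text{ribbon graph})$ is a bijection onto the orientable ribbon graphs built from the colour-alternating $(2n)$-gon, since $\pi$ is recovered by reading off which $G$-side each $G^\dagger$-side is glued to; hence the number of $\pi\in\PS_n$ yielding genus $g$ and $p$ white boundaries is exactly $|\widetilde{a}_{g,p}(n)|$. Collecting terms,
$$\E\Tr W^n=\sum_{\pi\in\PS_n}w(\pi)=\sum_{g\ge 0}\ \sum_{p\ge 1}c^p N^{n+1-2g}\,|\widetilde{a}_{g,p}(n)|=N^n\sum_{g\ge 0}\ \sum_{p\ge 1}c^p N^{1-2g}\,|\widetilde{a}_{g,p}(n)|,$$
and it remains only to pin down the ranges: the $(2n)$-gon has exactly $n$ white vertices, so $1\le p\le n$, and since at least one black and one white vertex always survive we have $V\ge 2$, forcing $g\le\lfloor n/2\rfloor$ (terms with $g>\lfloor (n-1)/2\rfloor$ simply vanish). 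I expect the one genuinely delicate point to be the verification that untwisted, colour-respecting ribbon gluings always produce an orientable surface with monochromatic vertex classes; once that is in hand, the $N^n$ prefactor, the factor $c^p$, and the Euler-characteristic step are all routine.
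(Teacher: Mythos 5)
Your proposal is correct and follows essentially the same route as the paper: the weight $w(\pi)=N^{V_i}M^{V_j}$ read off from the colour-separated index identifications, the Euler-characteristic count $\chi=1-n+V=2-2g$ for the one-face, $n$-edge ribbon graph, and the reorganisation of the sum over $\PS_n$ by genus and number of white boundaries, exactly as in the derivation preceding Proposition~\ref{prop3} and in the proof of Proposition~\ref{prop1}. The only cosmetic remark is that orientability already follows from all ribbons being untwisted, while the colour-respecting gluing is what guarantees monochromatic boundary classes; your range bookkeeping ($1\le p\le n$, vanishing terms for $g>\lfloor(n-1)/2\rfloor$) matches the stated formula.
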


In the LOE case, we must allow M\"obius twisted ribbons that represent variances of the form
\begin{equation*}
\E\left[G_{i_u,j_u}^{\intercal}G_{i_v,j_v}^{\intercal}\right]=\E\left[G_{j_u,i_{u+1}}G_{j_v,i_{v+1}}\right]=\tfrac{1}{2}
\end{equation*}
in addition to the untwisted ribbons representing the covariance
\begin{equation*}
\E\left[G_{i_u,j_u}^{\intercal}G_{j_v,i_{v+1}}\right]=\tfrac{1}{2}.
\end{equation*}
Nonetheless, the resulting ribbon graphs have consistently coloured boundaries when the underlying $(2n)$-gons have alternately coloured vertices.
\begin{proposition}[LOE moments] \label{prop4}
Let $n\in\N$ and define $\widetilde{b}_{k,p}(n)$ to be the set of non-orientable ribbon graphs of Euler genus $k$ with $p$ white boundaries built from the same $(2n)$-gons as the ribbon graphs of Proposition \ref{prop3}. Then, taking $M=cN$ yet again, the moments of a matrix $W$ drawn from the $(M,N)$ LOE have the genus expansion
\begin{equation}
m_n^{(\mathrm{LOE})}:=\E\Tr W^n=\left(\frac{N}{2}\right)^n\sum_{k=0}^n\sum_{p=1}^nc^pN^{1-k}\left(|\widetilde{a}_{k/2,p}(n)|\chi_{k/2\in\N}+|\widetilde{b}_{k,p}(n)|\right).
\end{equation}
\end{proposition}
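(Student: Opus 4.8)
The plan is to reproduce the argument behind Proposition~\ref{prop3} almost verbatim, the one essential change being that $G$ is now a \emph{real} Ginibre matrix. Writing $G_{i_u,j_u}^{\intercal}=G_{j_u,i_u}$, we still have
\[
\E\Tr W^n = \sum_{i_1,\ldots,i_n=1}^N\ \sum_{j_1,\ldots,j_n=1}^M \E\left[G_{i_1,j_1}^{\intercal}G_{j_1,i_2}G_{i_2,j_2}^{\intercal}G_{j_2,i_3}\cdots G_{i_n,j_n}^{\intercal}G_{j_n,i_1}\right],
\]
but the Isserlis--Wick theorem now applies to the $2n$ \emph{real} Gaussian factors inside this expectation with no transpose/non-transpose distinction available, so the relevant index set is the set $\cP_2(2n)$ of pairings of these $2n$ factors rather than $\PS_n$ as in the LUE case. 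By Remark~\ref{rmk1}, a block of such a pairing contributes a covariance equal to $\tfrac12$ times a product of two indicator functions, one identifying the two row ($j$-type) indices of the paired factors and the other their two column ($i$-type) indices. As a pairing has $n$ blocks, this produces the overall constant $2^{-n}$ and reduces the computation to evaluating weights $w(\pi)$, each counting the index assignments compatible with all of these indicator constraints, exactly as in \eqref{eq6} and its LUE analogue.

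I would then set up the ribbon-graph model as in Proposition~\ref{prop3}: draw a $(2n)$-gon with vertices labelled clockwise $i_1,j_1,i_2,j_2,\ldots,i_n,j_n$, coloured black if the label is an $i$-index and white if it is a $j$-index, so that each edge carries one factor of $G$, and realise a pairing $\pi$ by gluing the two edges of each block with a ribbon. The only new feature relative to the LUE case is that this ribbon may be M\"obius-twisted: comparing, for a given block, the clockwise boundary orientations of the two glued edges with the index matching dictated by the corresponding covariance, one finds that pairing two $G^{\intercal}$-edges, or two $G$-edges, forces a twisted ribbon, while pairing a $G^{\intercal}$-edge with a $G$-edge gives an untwisted one. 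The decisive observation is that in \emph{every} case the matching sends a $j$-index to a $j$-index and an $i$-index to an $i$-index, so the gluing only ever identifies a white vertex with a white vertex and a black vertex with a black vertex; this is exactly the consistent colouring of boundaries asserted just before the statement. Hence the vertices of the resulting cell decomposition --- equivalently, the boundary components of the ribbon graph --- are monochromatic, say $p$ of them white and $q$ of them black, and $w(\pi)=N^qM^p$.

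What remains is the Euler-characteristic accounting of Propositions~\ref{prop1}--\ref{prop3}. The glued $(2n)$-gon is a closed, possibly non-orientable, surface with a cell decomposition having $F=1$ face, $E=n$ edges and $V=p+q$ vertices, so $1-n+(p+q)=\chi=2-k$ for $k$ the Euler genus; thus $q=n+1-k-p$ and $w(\pi)=N^{\,n+1-k-p}M^p=c^pN^{\,n+1-k}$ once $M=cN$. Substituting into $\E\Tr W^n=2^{-n}\sum_{\pi\in\cP_2(2n)}w(\pi)$ and sorting the pairings by Euler genus $k$ and by their number $p$ of white boundaries yields
\[
\E\Tr W^n=2^{-n}\sum_{k,p}c^pN^{\,n+1-k}\,\#\bigl\{\pi\in\cP_2(2n):\ \text{Euler genus }k,\ p\text{ white boundaries}\bigr\}.
\]
Since a ribbon graph of Euler genus $k$ is either non-orientable --- counted by $\widetilde b_{k,p}(n)$ --- or (only possible when $k$ is even, in which case it has genus $k/2$) orientable --- counted by $\widetilde a_{k/2,p}(n)$ --- the cardinality in braces equals $|\widetilde a_{k/2,p}(n)|\chi_{k/2\in\N}+|\widetilde b_{k,p}(n)|$. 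Pulling out $2^{-n}N^{\,n+1-k}=(N/2)^nN^{\,1-k}$ and observing that the nonzero contributions have $1\le p\le n$, $0\le k\le n$ and $k+p\le n$ (as $p,q\ge1$) gives the stated expansion. The step I expect to be the real obstacle is the twisting dichotomy, and in particular a careful proof of the monochromatic-boundary claim: one must track how the clockwise orientations of the $G^{\intercal}$-edges and of the $G$-edges are arranged around the $(2n)$-gon and check that untwisted \emph{and} M\"obius ribbons both respect the black/white colouring of the vertices. Once that is in place, the remainder is the same Euler-formula bookkeeping already used for Propositions~\ref{prop1}--\ref{prop3}.
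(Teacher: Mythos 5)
Your argument is correct and is essentially the paper's own: the paper treats Proposition \ref{prop4} only as a sketch (Isserlis--Wick over pairings of the $2n$ real Gaussian factors, M\"obius-twisted ribbons for $G^{\intercal}$--$G^{\intercal}$ and $G$--$G$ pairings versus untwisted ones for mixed pairings, consistently coloured boundaries, and the same Euler-characteristic bookkeeping with $\chi=2-k$, $w(\pi)=N^qM^p=c^pN^{n+1-k}$), deferring details to \cite{Rah22}. Your proposal fills in exactly these steps, including the monochromatic-boundary observation that the paper states without proof, so no correction is needed.
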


\subsection{Non-Crossing Annular Pairings} \label{s1.2}

Parallel to the viewpoint of ribbon graphs, matrix moments and cumulants are also related to counts of non-crossing annular pairings and related objects. Let us proceed with a brief introduction to the definitions and notation used in this paper regarding partitions and permutations of a set. For detailed discussion of these concepts, the standard references are \cite{NicaSpeicher,MingoSpeicher2017}. 

\begin{definition}[Partitions and pairings]
A \textit{partition} of a set $A$ is a disjoint collection of subsets of $A$, called the \textit{blocks} of the partition, whose union is $A$. We denote the set of partitions of a set $A$ by $\cP(A)$ and define the set of \textit{pairings} $\cP_2(A)$ of $A$ to be the subset of $\cP(A)$ whose partitions all have blocks of size exactly two.
\end{definition}

For finite sets $A$, it is well known that $\cP(A)$ is a partially ordered set (poset), in fact a lattice, with respect to the refinement ordering. This means in particular that every pair of partitions in $\cP(A)$ has a well-defined join.
\begin{definition}[Join]
Let $A$ be a finite set and let $\pi,\sigma\in\cP(A)$. We henceforth endow $\cP(A)$ with the refinement ordering $\leq$ and consider the poset $(\cP(A),\leq)$. Under this ordering, we write $\pi\le\sigma$ whenever each and every block of $\pi$ is contained within some block of $\sigma$. The \textit{join} $\pi\vee\sigma$ of $\pi,\sigma$ is the smallest partition in $\cP(A)$ such that $\pi,\sigma\le\pi\vee\sigma$.
\end{definition}
For $A$ a finite set, the smallest and largest partitions in $\cP(A)$ are respectively
\begin{align*}
0_A&:=\{\{a\}\}_{a\in A},
\\ 1_A&:=\{A\}.
\end{align*}

In this paper, we will also consider permutations, writing $\PS_A$ for the set of permutations on a finite set $A$, with $\PS_n$ and $\PS_{\pm n}$ reserved for the special sets $[n]$ and $\pm[n]:=\{-n,\ldots,-1\}\cup[n]$, respectively. Any permutation can be seen as a partition by forgetting the ordering of the cycles of the permutation and interpreting them as the (unordered) blocks of the equivalent partition. Conversely, every partition can canonically be seen as a permutation by assigning each block increasing order. Moving forward, we will interchange notation for permutations and partitions, with their meaning being clear from context.

\begin{definition}[Non-crossing property] \label{def4}
Given a set $A$ of size $n$ and $\gamma\in \PS_A$, we say that $\pi\in \PS_A$ is \textit{non-crossing with respect to} $\gamma$ if
\begin{enumerate}
    \item $\pi\vee \gamma=1_A$,
    \item $\#(\pi)+\#(\pi^{-1}\gamma)+\#(\gamma)=n+2$,
\end{enumerate}
where we take this opportunity to define $\#(\sigma)$ to be the number of cycles (blocks) of a permutation (partition) $\sigma$. A partition $\pi\in\cP(A)$ is said to be non-crossing with respect to $\gamma\in\cP(A)$ if their canonical permutation analogue is non-crossing, according to the above, with respect to the permutation corresponding to $\gamma$.
\end{definition}

\begin{remark} \label{rmk3}
Defintion \ref{def4} generalises the standard notion of non-crossing partitions (on $[n]$ with respect to $1_n=(1,\ldots,n)$), where a partition $\pi\in\cP(n)$ is said to be non-crossing if none of its blocks cross. A block $A\in\pi$, in turn, is said to be \textit{crossing} if there exists another block $B\in\pi$ such that there exist $a,a'\in A$ and $b,b'\in B$ with $a<b<a'<b'$.
\end{remark}

\begin{figure}[H]
    \centering
    \includegraphics[width=0.4\linewidth]{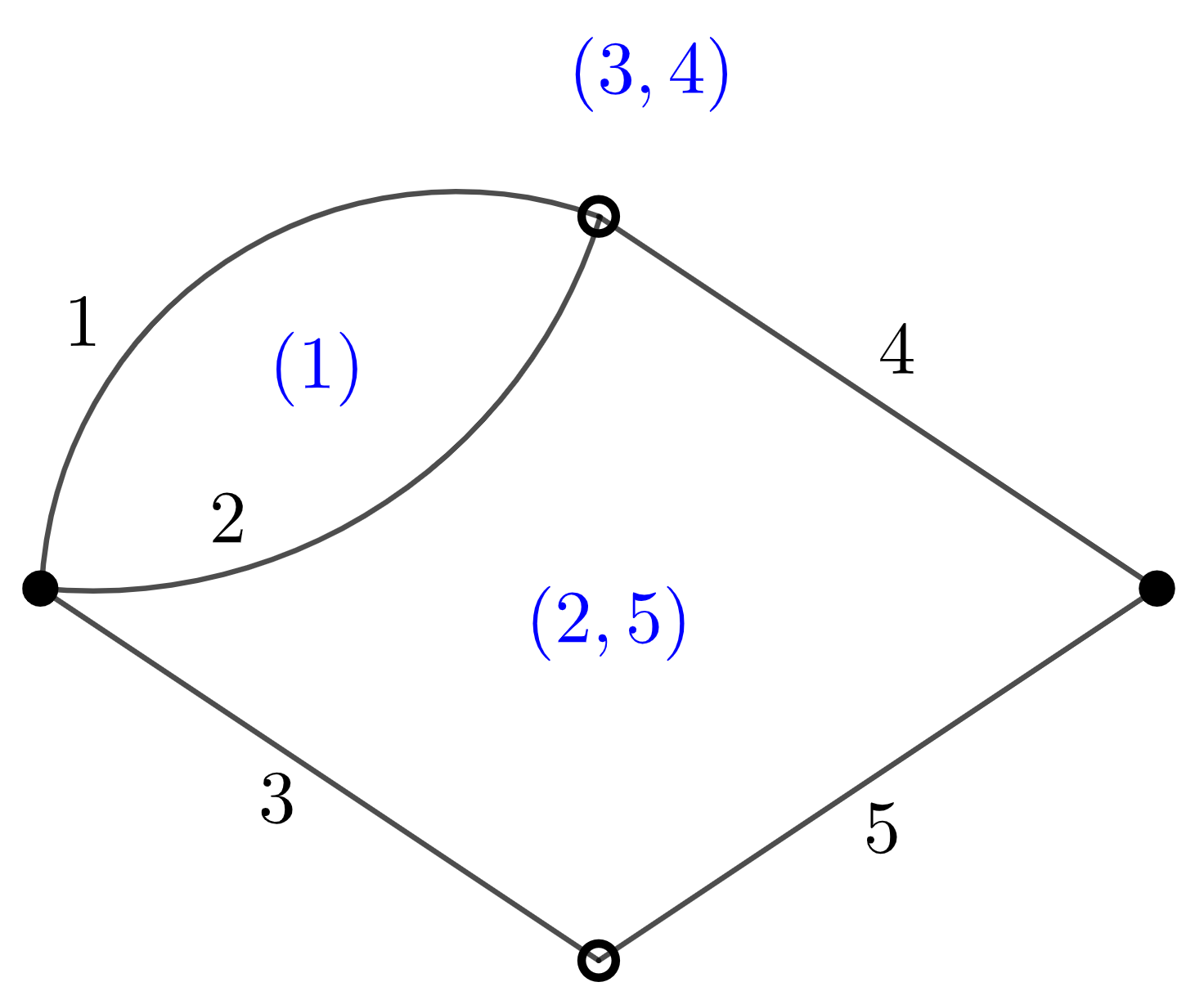}
    \caption{This is a hypermap on $A=[5]$ with vertex permutation $\gamma=(1,2,3)(4,5)$ and hyperedge permutation $\pi=(1,2,4)(3,5)$. Equivalently, it is a bipartite map with $|A|=n=5$ edges where the cycles of $\gamma$ represent the black vertices and the cycles of $\pi$ represent white vertices. Note that $\pi$ is non-crossing with  respect to $\gamma$ because this hypermap is connected and represents a cell decomposition of the genus zero sphere with faces given by the cycles of $\pi^{-1}\gamma=(1)(2,5)(3,4)$.}
    \label{fig:hypermap}
\end{figure}

\begin{remark} \label{rmk4}
The reader may recognize the triple $(A,\gamma,\pi)$ as an orientable combinatorial hypermap or bipartite map (else, see Section \ref{s3}). Then, condition (1) above says that the group $\langle \pi,\gamma\rangle$ generated by $\pi,\gamma$ is transitive on $A$, meaning that its orbit is exactly $A$ and the related hypermap is connected. Condition (2) on the other hand ensures a hypermap of genus one: $\#(\pi)$ counts the number of hyperedges or white vertices of our bipartite map, $\#(\pi^{-1}\gamma)$ counts the number of faces, and $\#(\gamma)$ counts the number of (black) vertices. Thus, our bipartite map has $F=\#(\pi^{-1}\gamma)$ faces, $E=n$ edges, and $V=\#(\pi)+\#(\gamma)$ vertices, so by Euler's formula $\chi=F-E+V$, we have that the Euler characteristic is $\#(\pi)+\#(\pi^{-1}\gamma)+\#(\gamma)-n$. As this is also equal to $2-2g$, one recovers condition (2) above exactly when setting the genus $g$ to zero. See Figure \ref{fig:hypermap} for an example.
\end{remark}

\begin{definition}[Notation]\label{def5}
Let $n\in\N$, $A$ be a finite set, $\gamma\in \PS_A$, $1_n:=(1,\ldots,n)\in \PS_n$, $\widetilde{1}_n:=(1,\ldots,n)(-n,\ldots,-1)\in \PS_{\pm n}$, and $\widetilde{\gamma}\in\PS_{\pm n}$. We define
\begin{itemize}
\item $\NC(\gamma)\subseteq \PS_A$ to be the set of non-crossing permutations of                                                                        $A$ with respect to $\gamma$;
\item $\NC_2(\gamma)\subseteq \NC(\gamma)$ to be the set of non-crossing pairings of $A$ with respect to $\gamma$;
\item $\NC^\delta(\widetilde{\gamma})\subset\PS_{\pm n}$ to be the set of permutations $\pi$ in $\NC(\widetilde{\gamma})$ such that, for $r\in[n]$, $(-r,r)$ is never a cycle of $\pi$ and for every cycle, say $(r_1,\dots, r_s)$, of $\pi$, $\pi$ must also contain the cycle $(-r_s,\dots,-r_1)$;
\item $\NC_2^\delta(\widetilde{\gamma})\subset\PS_{\pm n}$ to be the set of pairings in $\NC^\delta(\widetilde{\gamma})$;
\item $\NC(n):=\NC(1_n)$ to be the set of non-crossing (disk) permutations of $[n]$;
\item $\NC_2(n):=\NC_2(1_n)$ to be the set of non-crossing (disk) pairings of $[n]$;
\item the set $\NC^{\delta}(n,-n)\subset \PS_{\pm n}$ of \textit{symmetric non-crossing annular permutations of} $\pm[n]$ to be $\NC(\widetilde{1}_n)$;
\item the set $\NC_2^{\delta}(n,-n)\subset \PS_{\pm n}$ of \textit{symmetric non-crossing annular pairings of} $\pm[n]$ to be the set of pairings in $\NC^{\delta}(n,-n)$.
\end{itemize}
\end{definition}

In this paper, we will mostly deal with non-crossing disk and annular permutations, which correspond respectively to the cases $\#(\gamma)=1,2$. Non-crossing disk permutations are of particular interest in free probability and random matrix theory, see for instance \cite{biane1997some,mingo2024asymptotic,male2022joint,NicaSpeicher}. Non-crossing annular permutations were first studied in \cite{mingo2004annular} and have since then been used to define new notions of independence such as second order freeness \cite{mingo2007secondPart1}, which was later generalised to higher order freeness in the series of papers \cite{mingo2007second,collins2007second}.
\begin{example}\label{Example:non crossing pairings}
Set $n=6$ and let $\gamma=1_6\in \PS_6$. Then, $\pi=(1,2)(3,6)(4,5)$ is a non-crossing disk pairing since $\pi^{-1}\gamma=(1)(2,6)(3,5)(4)$ and we have
\begin{equation*}
\#(\pi)+\#(\pi^{-1}\gamma)+\#(\gamma)=6+2.
\end{equation*}
If instead, $n=8$ and $\gamma=(1,2,3,4)(5,6,7,8)\in \PS_8$, then $\pi=(1,8)(2,3)(4,5)(6,7)$ is a non-crossing annular pairing with respect to $\gamma$ because (checking Definition \ref{def4})
\begin{enumerate}
\item $\pi$ contains the cycle $(1,8)$, which intersects with both cycles of $\gamma$, so $\pi\vee\gamma=1_{[8]}$;
\item $\pi^{-1}\gamma=(1,3,5,7)(2)(4,8)(6)$, so
\begin{equation*}
\#(\pi)+\#(\pi^{-1}\gamma)+\#(\gamma)=8+2.
\end{equation*}
\end{enumerate}
\end{example}

\begin{figure}
    \centering
    \includegraphics[width=0.5\linewidth]{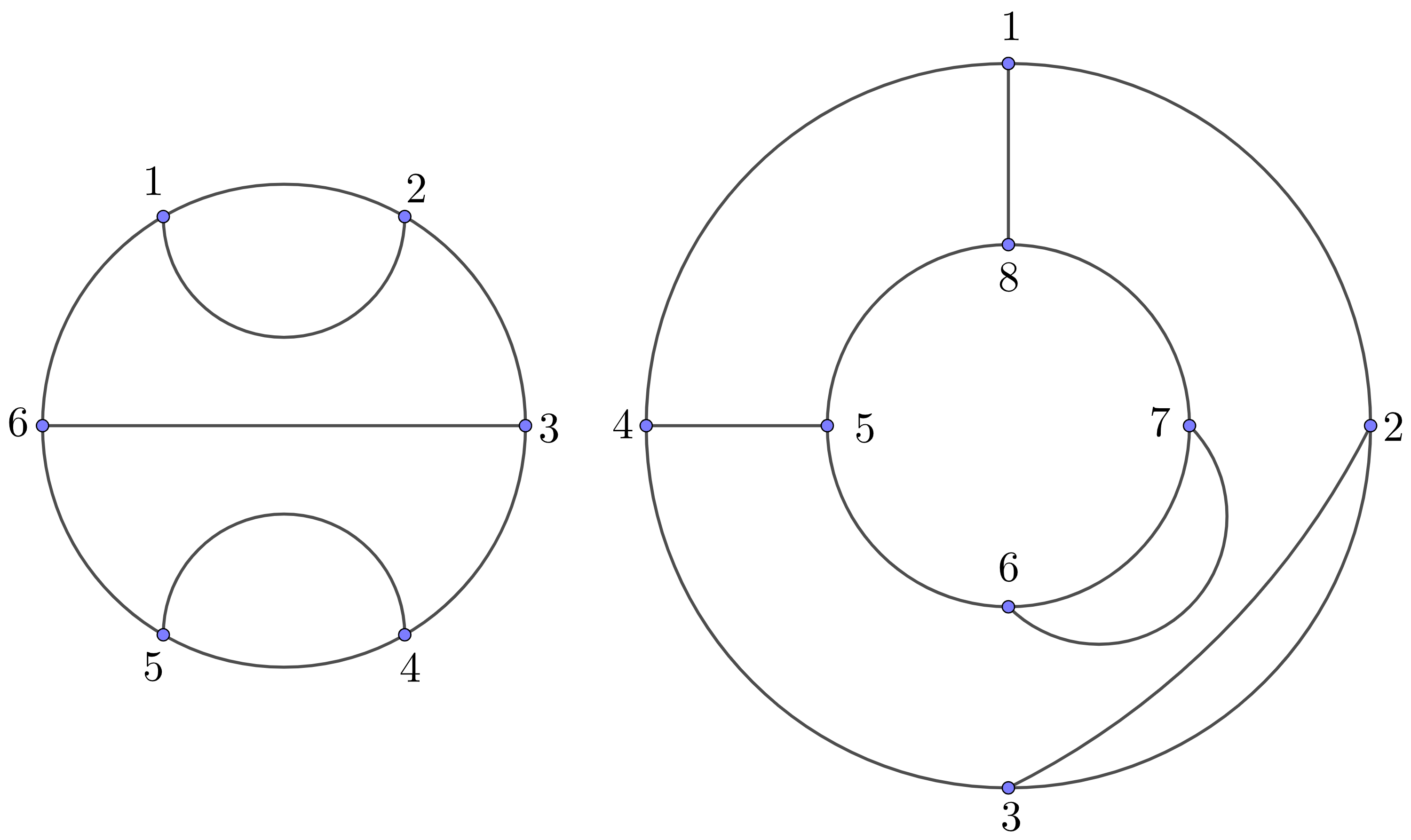}
    \includegraphics[width=0.3\linewidth]{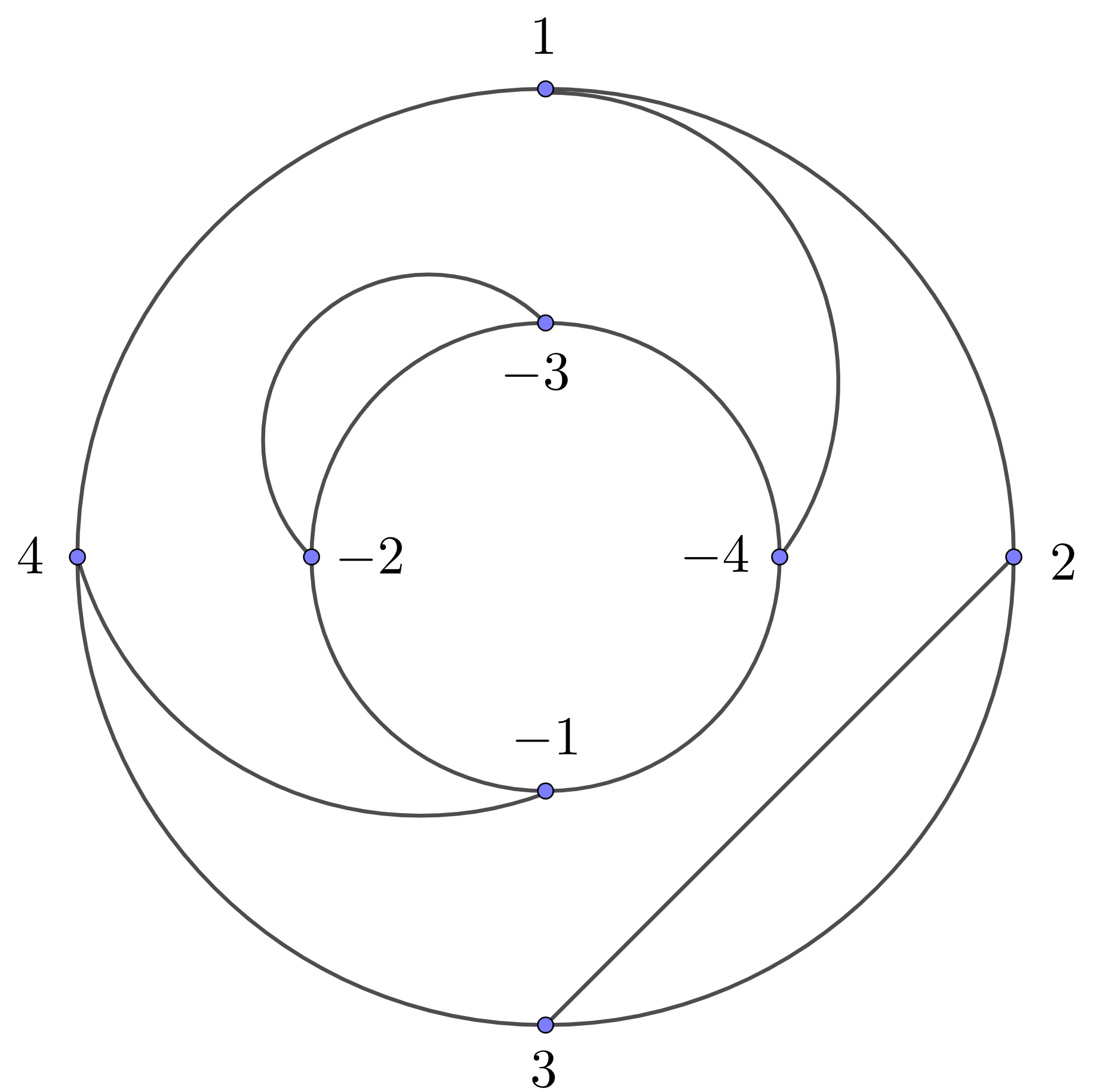}
    \caption{The non-crossing disk and annular pairings with respect to $\gamma=(1,\dots,6)$ (left) and $\gamma=(1,2,3,4)(5,6,7,8)$ (middle) of Example \ref{Example:non crossing pairings} with the symmetric non-crossing annular pairing $\pi=(1,-4)(4,-1)(2,3)(-2,-3)\in\NC_2^\delta(4,-4)$ (right). The cycles $(1,8)$, $(4,5)$, $(1,-4)$, and $(4,-1)$ connecting the inner and outer circles of the annuli are called \textit{through strings}.}
    \label{fig:noncrossingpairings}
\end{figure}

\section{Non-Crossing Annular Pairings and Moments of Gaussian Ensembles} \label{s2}

In propositions \ref{prop1} and \ref{prop2} of \S\ref{s1.1}, we saw that the moments $m_n=\E\Tr H^n$ of the GUE and GOE are given by genus expansions whose coefficients are enumerations of particular ribbon graphs. In this section, we show that these ribbon graphs are in bijection with certain classes of non-crossing annular pairings. This is done most clearly through the language of permutations. Thus, let us slightly abuse notation and redefine the sets of ribbon graphs $a_g(n)$ and $b_k(n)$ of propositions \ref{prop1} and \ref{prop2} as the combinatorial maps that they canonically biject to.

\begin{definition}[Combinatorial maps] \label{def6}
Following \cite{Tut84}, \cite[\S17.10]{GR01}, a \textit{combinatorial map} is a quadruple $(E_Q,\tau_0,\tau_1,\tau_2)$ where
\begin{enumerate}
\item $E_Q:=\pm[n]=\{-n,\ldots,-1,1,\ldots,n\}$ and $n$ is even,
\item $\tau_0,\tau_1,\tau_2$ are fixed-point free involutions on $E_Q$,
\item $\tau_0\tau_1=\tau_1\tau_0$ and $\tau_0\tau_1$ is also fixed-point free,
\item the group $\langle\tau_0,\tau_1,\tau_2\rangle$ generated by $\tau_0,\tau_1,\tau_2$ is transitive, meaning that it has one orbit, that being all of $E_Q$.
\end{enumerate}
The elements of $E_Q$ are called \textit{quarter-edges}. The orbits of $\tau_0$, $\langle\tau_0,\tau_1\rangle$, $\langle\tau_0,\tau_2\rangle$, and $\langle\tau_1,\tau_2\rangle$ are respectively referred to as the \textit{half-edges}, \textit{edges}, \textit{vertices}, and \textit{faces} of the combinatorial map.
\end{definition}

\begin{example}
The ribbon graph of Figure \ref{fig6} below corresponds to, and by our abuse of notation is identified with, the combinatorial map $(E_Q,\tau_0,\tau_1,\tau_2)$ with
\begin{equation*}
\begin{array}{ll}E_Q=\{1,-1,2,-2,3,-3,4,-4\},&\tau_0=(1,-1)(2,-2)(3,-3)(4,-4),
\\ \tau_1=(1,-2)(-1,2)(3,-4)(-3,4),&\tau_2=(1,-4)(-1,2)(-2,3)(-3,4).\end{array}
\end{equation*} 
\end{example}
\begin{figure}[H]
        \centering
        \includegraphics[width=0.3\textwidth]{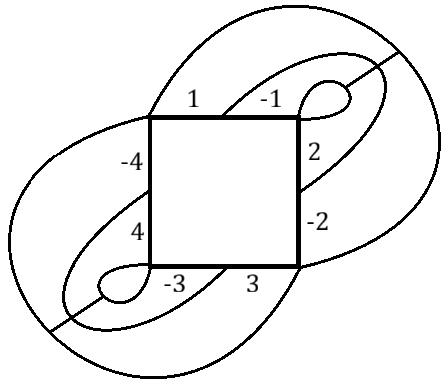}
        \caption{We divide the ribbons of a ribbon graph into quarters that are labelled $1,-1,2,-2,3,-3,4,-4$ in clockwise order. This yields a canonical combinatorial description of the ribbon graph.} \label{fig6}
\end{figure}

As we are studying unmixed, one-point moments, our ribbon graphs are built from exactly one polygon, so one says there is one vertex, according to the above terminology. Then, the canonical combinatorial description of a ribbon graph has
\begin{align}
\tau_0&=(1,-1)(2,-2)\cdots(n,-n), \label{tau0}
\\\tau_2&=(-1,2)(-2,3)\cdots(-n,1). \label{tau2}
\end{align}
Moreover, in the orientable case, we have for all $1\le a,b\le n$ that $\tau_1(a)<0$ and that $\tau_1(b)=-a$ whenever $\tau_1(a)=-b$. Thus, the orientable case can be simplified by removing the redundancy of $\tau_0$ and simply considering pairings $\pi\in\cP_2(n)$.

\begin{definition}[Ribbon graphs]\label{def7}
Let $n\in\N$ be even. The set of genus $g$ orientable ribbon graphs with $n$ half-edges introduced in Proposition \ref{prop1} is
\begin{equation*}
a_g(n)=\left\{\pi\in\cP_2(n)\,\middle\vert\,\#(\pi^{-1}1_n)=n/2+1-2g\right\},
\end{equation*}
where we recall that $1_n=(1,2,\ldots,n)$.

The set of non-orientable ribbon graphs with $n$ half-edges and Euler genus $k$ introduced in Proposition \ref{prop2} is
\begin{equation*}
b_k(n)=\left\{\tau_1\in\cP_2(\pm n)\,\middle\vert\,\begin{array}{l}\#(\tau_2\tau_1)=n+2(1-k),\,\tau_0\tau_1=\tau_1\tau_0,\\\tau_0\tau_1\textrm{ is fixed-point free, and there}\\\textrm{exists }a\in[n]\textrm{ such that }\tau_1(a)\in[n]\end{array}\right\},
\end{equation*}
where $\cP_2(\pm n)$ is the set of pairings of $\pm[n]$ and $\tau_0,\tau_2$ are as in equations \eqref{tau0}, \eqref{tau2}.
\end{definition}

\begin{remark}
The conditions on $\#(\pi^{-1}1_n)$ and $\#(\tau_2\tau_1)$ in the above definition derive from the fact that the Euler characteristic of a ribbon graph is given by $\chi=F-E+V$, where $F=1$ counts the unique face of the $n$-gon at the centre of the ribbon graph, $E=n/2$ is the number of edges of the $n$-gon after pairwise identification, and $V$ is the number of boundaries of the ribbon graph, which are identify the vertices of the $n$-gon. In the orientable case, $V=\#(\pi^{-1}1_n)$ and the Euler characteristic relates to the genus according to $\chi=2-2g$. In the non-orientable case, each boundary of the ribbon graph is described by \textit{two} cycles of $\tau_2\tau_1$, accounting for the potential clockwise and anticlockwise local choices of orientation. Moreover, the Euler genus $k$ and characteristic $\chi$ are related by $\chi=2-k$.

The other conditions for $b_k(n)$ descend from the third condition of Definition \ref{def6} (the remaining conditions are satisfied due to $\tau_1$ being a pairing and our choice of $\tau_0,\tau_2$) and the requirement that a ribbon graph must have at least one twisted ribbon to be non-orientable.
\end{remark}

In \S\ref{s2.1} below, we prove the bijection
\begin{equation*}
b_1(n)\cong\NC_2^\delta(n,-n),
\end{equation*}
while in \S\ref{s2.2} and \S\ref{s2.3}, we derive analogous sets of non-crossing annular pairings that are in bijection with $a_1(n)$ and $b_2(n)$, respectively. We recall that these ribbon graphs are fundamental in the sense that, by the classification theorem for closed surfaces, every other non-planar ribbon graph contributing to $m_n^{(\mathrm{GUE})}$ and $m_n^{(\mathrm{GOE})}$ can be constructed by gluing a finite number of torii to a ribbon graph of $a_1(n),b_1(n)$, or $b_2(n)$. The general strategy in the following is to use topological arguments to guide our intuition before giving algebraic proofs involving the various subsets of $\PS_n$ and $\PS_{\pm n}$ discussed thus far.

\subsection{Order \texorpdfstring{$1/N$}{1/N} Corrections to the GOE Moments} \label{s2.1}

Let $H$ be an $N\times N$ GOE matrix. As discussed in Proposition \ref{prop1}, the moment $m_n^{(\mathrm{GOE})}=\E\Tr H^n$ ($n\in\N$ even) is a polynomial of the form
\begin{equation} \label{mngoe1}
m_n^{(\mathrm{GOE})}=\frac{|a_0(n)|}{2^n}N^{n/2+1}+\frac{|b_1(n)|}{2^n}N^{n/2}+\mathrm{O}(N^{n/2-1}).
\end{equation}
In other words, the normalised trace of the scaled matrix $\widetilde{H}=\frac{2}{\sqrt{N}}H$ has terminating $1/N$ expansion of the form
\begin{equation} \label{mngoe2}
\frac{1}{N}\E\Tr\widetilde{H}^n=|a_0(n)|+\frac{|b_1(n)|}{N}+\mathrm{O}(N^{-2}).
\end{equation}
The set $a_0(n)$ is simply the set of planar ribbon graphs, which is counted by the Catalan numbers. We study the correction to this leading order term, which is equal to the number of non-orientable ribbon graphs of $n$ half-edges and Euler genus $1$. This correction term has also been explored from the side of free probability theory in \cite{mingo2019}, where it was proved that the coefficient of interest counts the symmetric non-crossing annular pairings of $\NC_2^\delta(n,-n)$. We now unify these theories by showing how the relevant ribbon graphs can be seen as non-crossing annular pairings. 

We begin with a well known lemma that is often used as an alternative definition of the (Euler) genus of a ribbon graph.
\begin{lemma}[Ribbon graphs and surfaces] \label{lemma1}
A (non-)orientable ribbon graph is of (Euler) genus $k$ ($g$) exactly when it can be embedded into a (non-)orientable surface of equal (Euler) genus without any crossings or twists and excising the ribbon graph from said surface results in a finite collection of disks.
\end{lemma}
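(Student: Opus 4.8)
The plan is to prove the lemma by realising every ribbon graph as a compact surface with boundary and then identifying \emph{the} closed surface into which it can sit with disk complement. Recall that a ribbon graph $R$ in our sense is built from a single $n$-gon together with $n/2$ ribbons, so it is a compact $2$-manifold with boundary that deformation retracts onto a graph with one vertex and $n/2$ edges; hence its Euler characteristic is $\chi(R)=1-n/2$, and its boundary $\partial R$ is a disjoint union of $V$ circles, where $V$ is exactly the number of boundaries appearing in Propositions \ref{prop1} and \ref{prop2}. Recall also that, by definition, the genus $g$ (resp.\ Euler genus $k$) of $R$ is the one for which $1-n/2+V=2-2g$ (resp.\ $=2-k$), and that $R$ is declared non-orientable precisely when one of its ribbons is M\"obius-twisted. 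Throughout I would keep in mind that ``ribbon graph'' means this $2$-dimensional object, so that a twisted ribbon is embedded as a M\"obius band.

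For the ``only if'' direction, I would cap off each of the $V$ boundary circles of $R$ with a disk, obtaining a closed surface $\widehat R$. Tautologically $R\subseteq\widehat R$ with no crossings or twists, and $\widehat R\setminus R$ is the disjoint union of the $V$ open disks just glued in. Moreover $\chi(\widehat R)=\chi(R)+V=1-n/2+V$, which is exactly the Euler characteristic attached to the (Euler) genus of $R$. Since a M\"obius band embeds in $\widehat R$ if and only if one embeds in $R$, and a closed orientable surface contains no M\"obius band, $\widehat R$ is orientable if and only if $R$ is; hence $\widehat R$ is a (non-)orientable closed surface of the same (Euler) genus as $R$, as claimed.

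For the ``if'' direction, suppose $R$ is embedded without crossings or twists in a closed surface $\Sigma$ for which $\Sigma\setminus R$ is a finite disjoint union of disks. Since $R$ is a compact subsurface, every boundary circle $c$ of $\partial R$ has a collar in $\Sigma$ whose exterior side lies in a single complementary region; that region is a disk, and its frontier is the single circle $c$. Conversely, each complementary disk is attached to $\partial R$ along exactly one circle of $\partial R$, namely its own boundary. This sets up a bijection between the circles of $\partial R$ and the complementary disks, so $\Sigma$ is obtained from $R$ by capping its $V$ boundary circles; that is, $\Sigma\cong\widehat R$, and therefore the (Euler) genus of $\Sigma$ equals that of $R$.

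The only step that is not purely formal, and hence the main obstacle, is this last bijection together with the assertion that each complementary region meets $\partial R$ along a single circle: this is where the disk hypothesis enters (a disk has connected boundary), alongside elementary surface topology (collar neighbourhoods, and the fact that a two-sided circle separates a small neighbourhood into two sides). Granting that we work with the $2$-dimensional ribbon graph rather than its underlying abstract graph, so that M\"obius-twisted ribbons genuinely force non-orientability of $\widehat R$, the argument is complete; the statement is in any case classical in the theory of maps and fatgraphs and could alternatively be cited.
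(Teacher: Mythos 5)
Your argument is correct, but there is nothing in the paper to compare it against: Lemma \ref{lemma1} is stated without proof, being presented as a well-known fact that is ``often used as an alternative definition of the (Euler) genus of a ribbon graph.'' Your capping construction is exactly the standard way to substantiate that claim: viewing the ribbon graph $R$ as a compact surface with boundary with $\chi(R)=1-n/2$, capping its $V$ boundary circles produces the closed surface $\widehat R$ with $\chi(\widehat R)=1-n/2+V$, which is precisely the Euler characteristic the paper uses to assign the (Euler) genus via $\chi=2-2g$ or $\chi=2-k$; conversely, the disk hypothesis forces any admissible ambient surface $\Sigma$ to be homeomorphic to $\widehat R$, since each complementary component is a disk whose boundary is a single circle of $\partial R$ and each circle of $\partial R$ bounds exactly one such disk. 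One small point of phrasing: the biconditional ``a M\"obius band embeds in $\widehat R$ if and only if one embeds in $R$'' is stated more strongly than you need and is not obvious in the direction from $\widehat R$ to $R$ (an a priori M\"obius band in $\widehat R$ could meet the capping disks); the clean statement is simply that an orientation of $R$ extends over the capped disks and that a subsurface of an orientable surface is orientable, which gives $\widehat R$ orientable if and only if $R$ is. This is standard surface topology and not a genuine gap, so your write-up correctly supplies the details the paper leaves implicit.
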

Thus, $b_1(n)$ is the set of ribbon graphs with $n$ half-edges that can be drawn on the real projective plane without any crossings or twists. For tractability, we draw our ribbon graphs on the fundamental polygon of the real projective plane, which is a disk with its boundary identified antipodally. Since the ribbons of our drawing has no crossings or twists, we are able to shrink our ribbons down to edges, removing all of the negative labels. Note that at least one edge must cross the antipodally-identified boundary of the fundamental polygon (an odd number of times to avoid trivial crossings), otherwise excising the ribbon graph from the real projective plane would not result in a collection of disks; see Figure \ref{fig7a} below.

\begin{figure}[H]
    \centering
    \begin{subfigure}{0.49\textwidth}
        \centering
        \captionsetup{justification=centering}
        \includegraphics[width=0.69\textwidth]{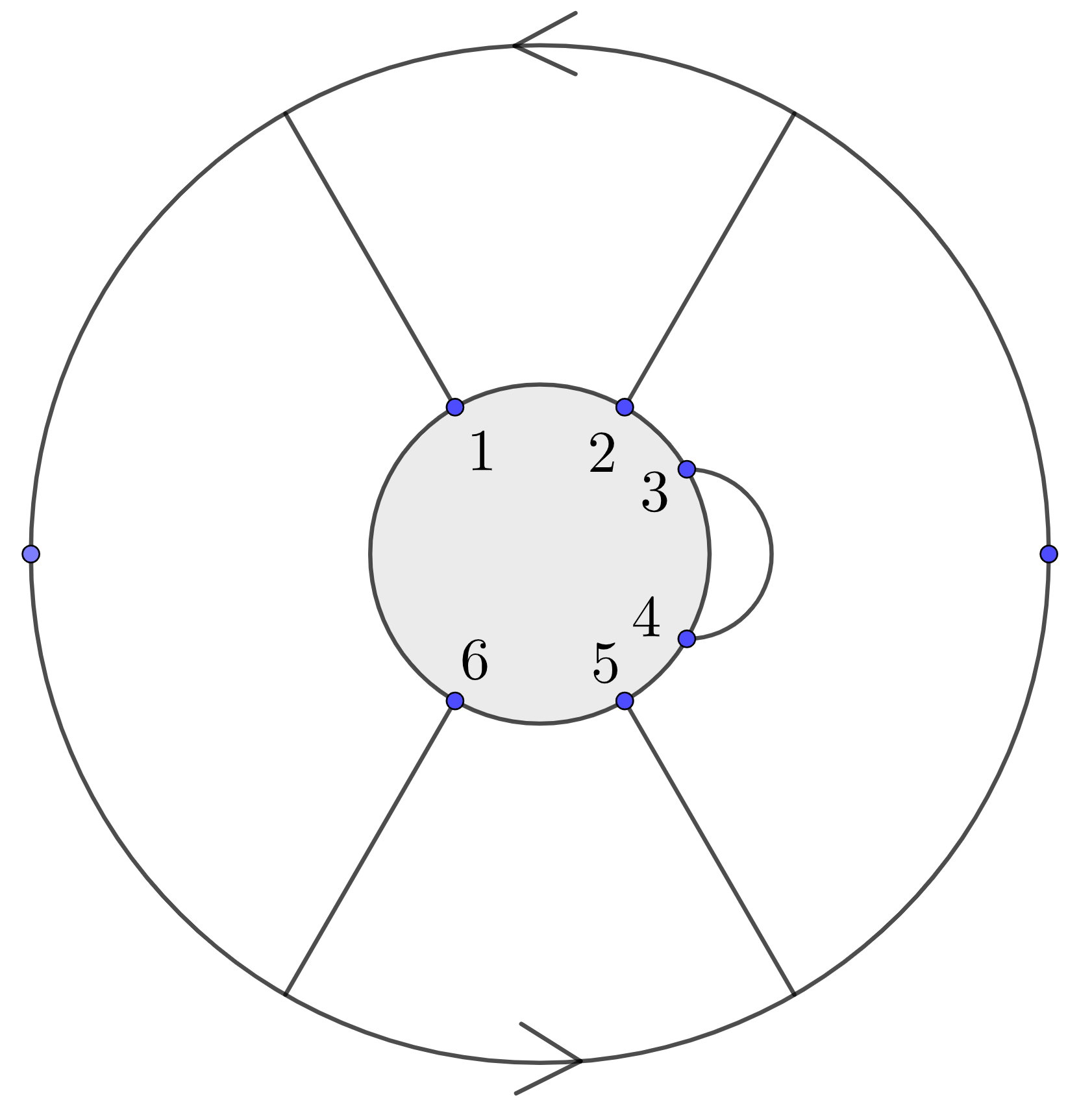}
        \caption{Graph given by the restriction $\tau_1\vert_{E_Q/\tau_0}$.} \label{fig7a}
    \end{subfigure}\hfill
    \begin{subfigure}{0.49\textwidth}
        \centering
        \captionsetup{justification=centering}
        \includegraphics[width=0.69\textwidth]{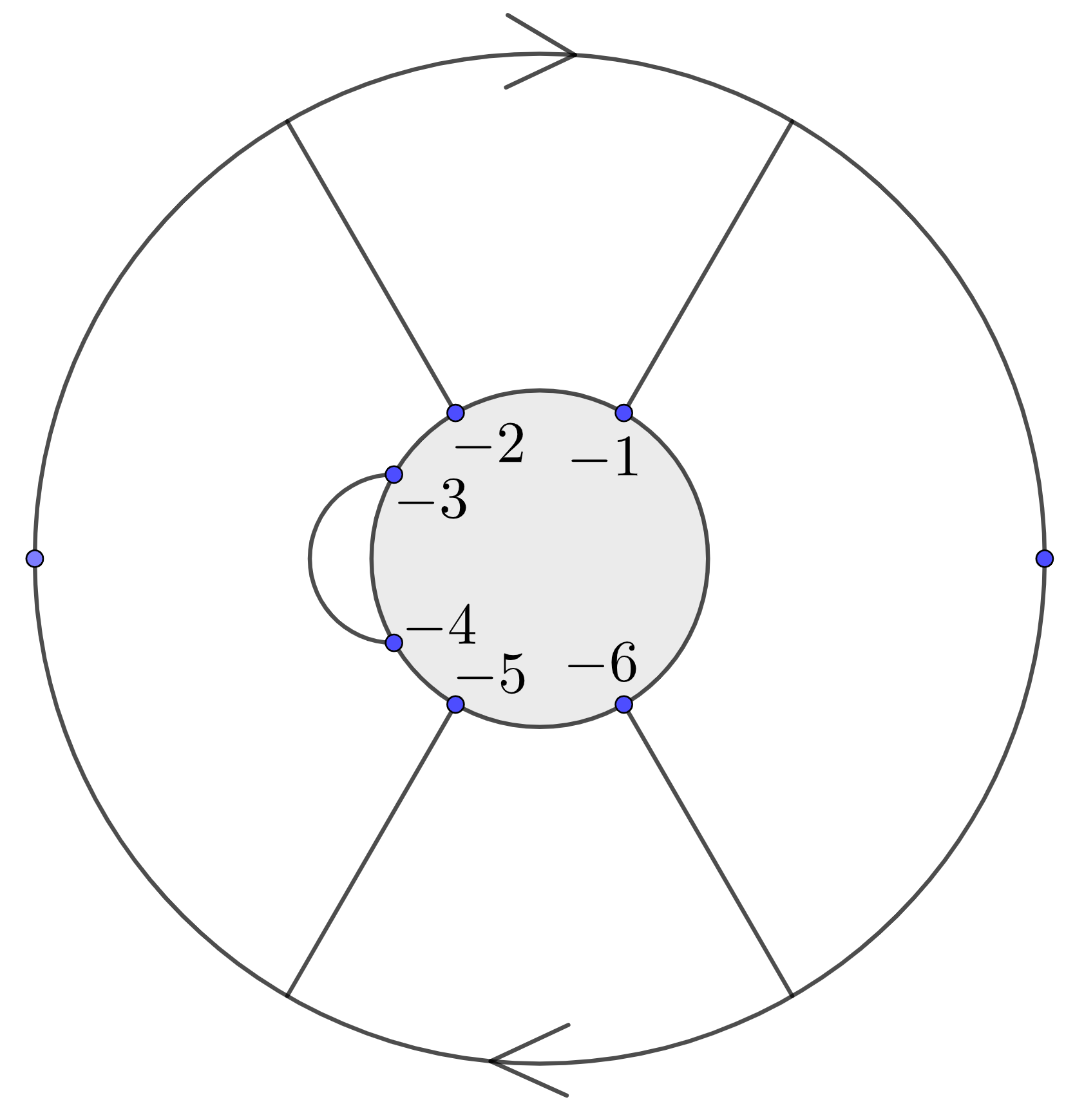}
        \caption{Inverted copy of the graph.} \label{fig7b}
    \end{subfigure}

    \begin{subfigure}{0.49\textwidth}
        \centering
        \captionsetup{justification=centering}
        \includegraphics[width=0.69\textwidth]{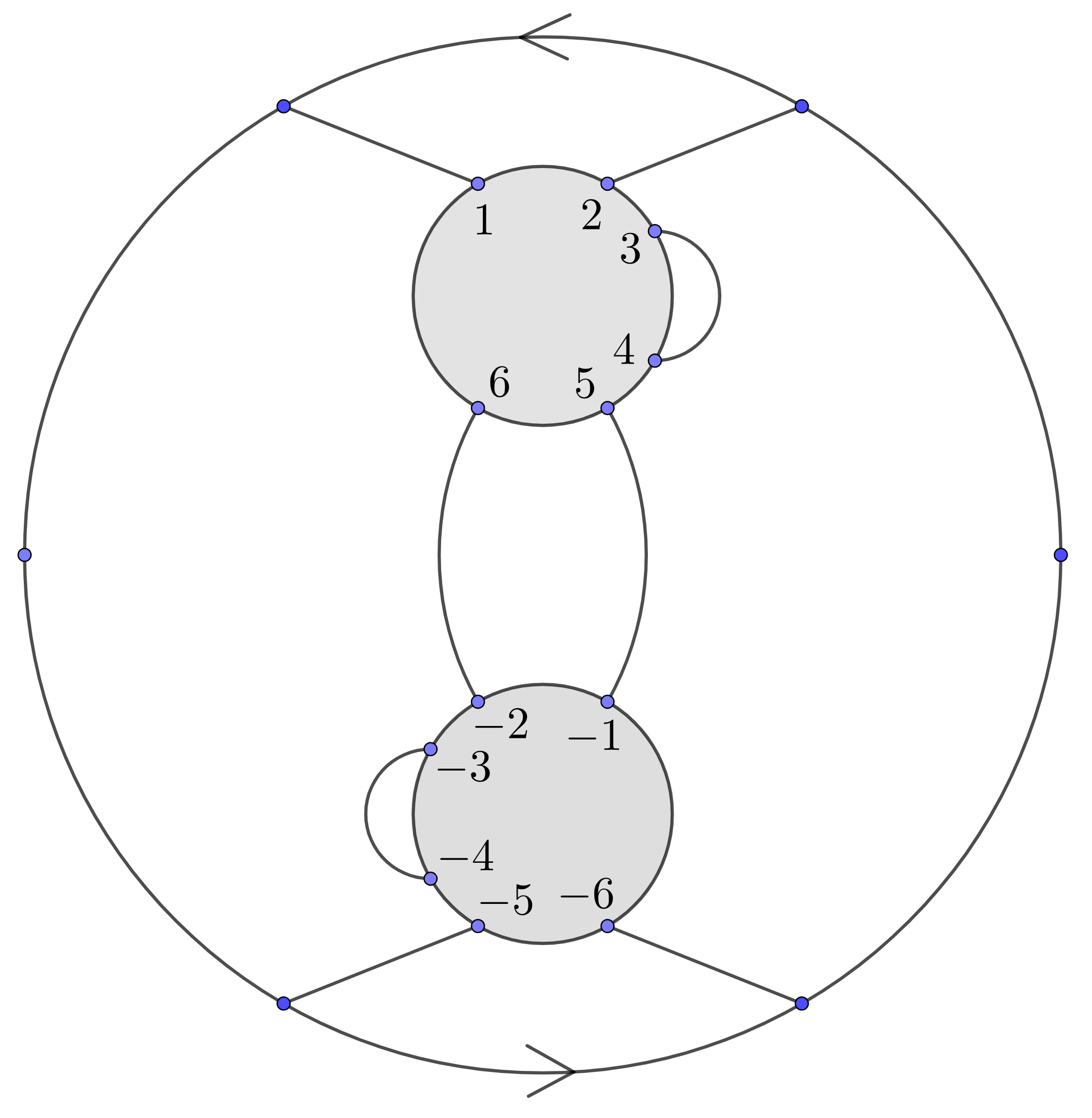}
        \caption{Two-vertex graph on a sphere.} \label{fig7c}
    \end{subfigure}\hfill
    \begin{subfigure}{0.49\textwidth}
        \centering
        \captionsetup{justification=centering}
        \includegraphics[width=0.69\textwidth]{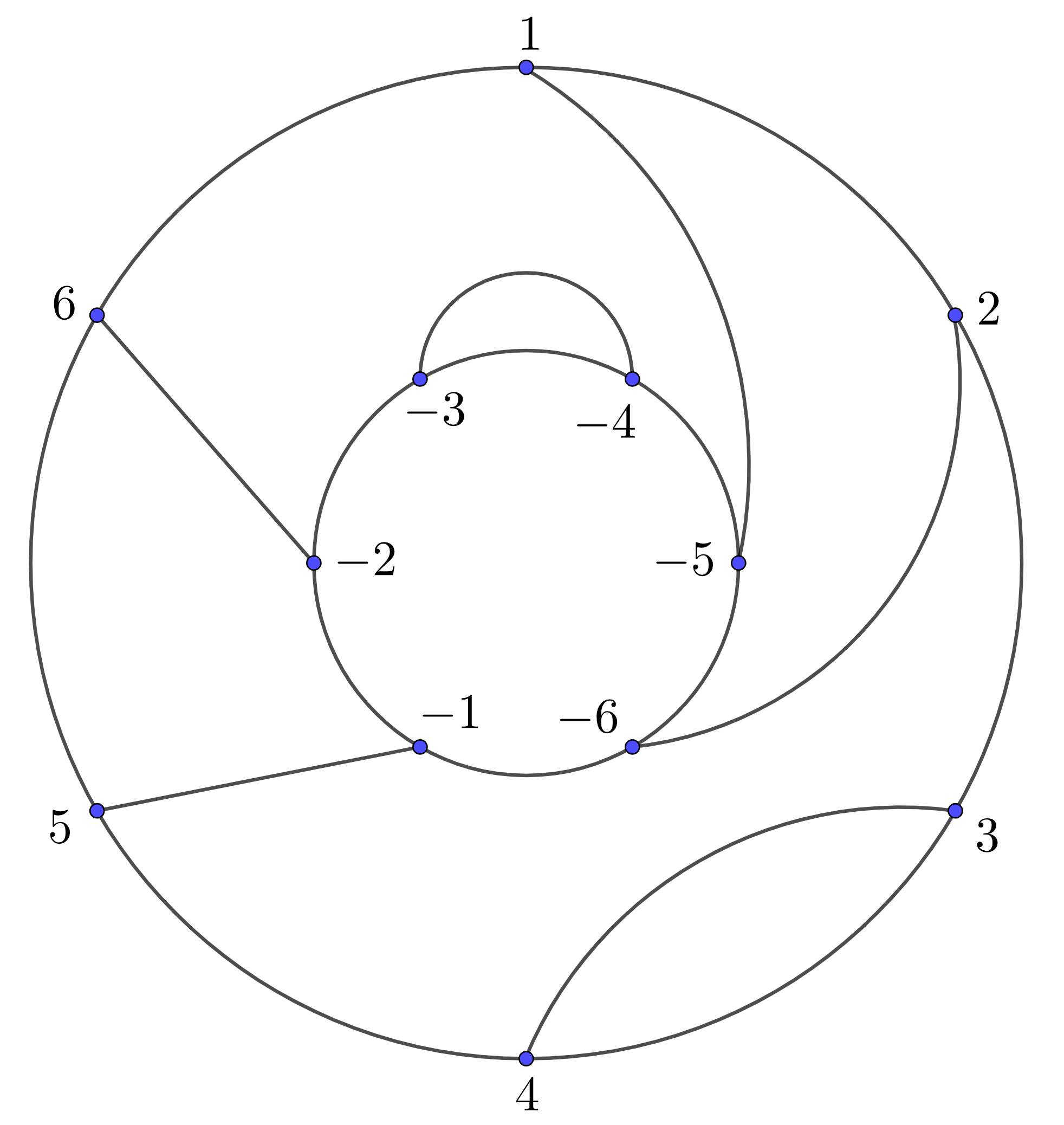}
        \caption{The non-crossing annular pairing.} \label{fig7d}
    \end{subfigure}
    \caption{Given $\tau_1\in b_1(6)$, consider its restriction to the positively-labelled half-edges obtained by quotienting the set of quarter-edges $\pm[6]$ by the action of $\tau_0$; this is (a). To obtain a symmetric non-crossing annular pairing from this graph, proceed by drawing an inverted (reflected) copy of it with labels replaced by their negatives, (b), then glue the bottom half of the boundary of (a) to the top half of the boundary of (b) to produce (c), a two-vertex ribbon graph on a sphere. Gluing the boundary together and excising said vertices gives the symmetric non-crossing annular pairing (d).} \label{fig7}
\end{figure}

Now, to describe our bijection, we first recall that the sphere is a double cover of the real projective plane, where the double covering map projects antipodal points on the sphere to the same point on the real projective plane. Thus, the real projective plane can be visualised as the northern hemisphere of the sphere with the equator identified antipodally. Taking the reverse viewpoint, gluing the fundamental polygon of the real projective plane to a copy of itself along half of the boundary produces its canonical double cover. 

Hence, given a ribbon graph drawn on the fundamental polygon of a real projective plane, the algorithm to produce the desired symmetric non-crossing annular pairing is as follows:
\begin{enumerate}
\item Draw an inverted (i.e., reflected along some axis) copy of the ribbon graph with all labels replaced by their negatives in order to distinguish the two copies.
\item Glue the original ribbon graph and its inverted copy together along half of their boundaries. The result is a graph with two vertices on a sphere.
\item Removing the vertices and labelling the resulting boundaries by the half-edges connecting to them results in the desired annular pairing.
\end{enumerate}

For the inverse mapping, one simply glues disks acting as ribbon graph vertices to the boundaries of the annulus to form a sphere, moves said vertices to the north and south poles of the sphere while ensuring that every edge is antipodal to another, cuts along the equator, discards the hemisphere with negative labels, and finally identifies the boundary of the remaining hemisphere in an antipodal way.

We give the combinatorial statement of our bijection.

\begin{proposition}[Bijection for $b_1(n)$]\label{prop5}
Let $n\in\N$ be even and recall the specifications of $\NC_2^\delta(n,-n)$ and $b_k(n)$ given in definitions \ref{def5} and \ref{def7}. Then, the mapping
\begin{align*}
\varphi_1:b_1(n)&\to\NC_2^\delta(n,-n),
\\\tau_1&\mapsto\tau_1\tau_0
\end{align*}
is a bijection, where we recall that $\tau_0=(1,-1)(2,-2)\cdots(n,-n)$.
\end{proposition}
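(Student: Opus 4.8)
The plan is to establish the sharper statement that, for $\tau_1\in\cP_2(\pm[n])$, one has $\tau_1\in b_1(n)$ if and only if $\pi:=\tau_1\tau_0\in\NC_2^\delta(n,-n)$, by translating the defining conditions of one set into those of the other under the substitution $\pi=\tau_1\tau_0$. Because right-multiplication by the involution $\tau_0$ is its own inverse operation, this equivalence immediately makes $\varphi_1$ a bijection with inverse $\psi_1\colon\pi\mapsto\pi\tau_0$, so injectivity and surjectivity require no separate treatment. The topological recipe described above (draw the negated mirror copy, glue along half the boundary, delete the two vertices) is precisely the geometric content of this ``multiply by $\tau_0$'' operation, and I would use it only as a guide.

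First I would record two facts about the fixed permutations \eqref{tau0}--\eqref{tau2}. First, $\tau_0$ is the sign flip $r\mapsto-r$, so $\tau_0^2=e$ and a permutation commutes with $\tau_0$ precisely when it is invariant under $r\mapsto-r$. Second, $\tau_2\tau_0=\widetilde{1}_n$; this is the direct cycle computation that $\tau_2\tau_0$ sends $k\mapsto k+1$ cyclically on $[n]$ and sends $-1\mapsto-n\mapsto-(n-1)\mapsto\cdots\mapsto-2\mapsto-1$, producing exactly the cycles $(1,\dots,n)$ and $(-n,\dots,-1)$. In particular $\#(\widetilde{1}_n)=2$ and its two orbits are $[n]$ and $-[n]$.

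Then I would translate each defining condition, treating each as an ``iff''. \textbf{Pairing:} if $\tau_0\tau_1=\tau_1\tau_0$ then $\pi^2=\tau_1^2\tau_0^2=e$; $\pi=\tau_0\tau_1$ is fixed-point free exactly under the $b_1(n)$ hypothesis; and $\pi(r)=\tau_1(-r)\ne-r$ since $\tau_1$ is fixed-point free, so $\pi$ never contains a cycle $(-r,r)$ --- conversely that absence forces $\pi\tau_0$ to be fixed-point free. \textbf{$\delta$-structure:} for a pairing, the cycle condition defining $\NC^\delta$ is precisely $\tau_0\pi\tau_0=\pi$, which a one-line manipulation shows is equivalent to $\tau_0\tau_1=\tau_1\tau_0$. \textbf{Cardinality:} using $\pi^{-1}=\pi$ and commutation, conjugating by $\tau_0$ carries $\pi^{-1}\widetilde{1}_n=\pi\widetilde{1}_n=\tau_0\tau_1\tau_2\tau_0$ to $\tau_1\tau_2$, so $\#(\pi^{-1}\widetilde{1}_n)=\#(\tau_1\tau_2)=\#(\tau_2\tau_1)$; combined with $\#(\pi)=n$ and $\#(\widetilde{1}_n)=2$, the $\NC_2$ count $\#(\pi)+\#(\pi^{-1}\widetilde{1}_n)+\#(\widetilde{1}_n)=2n+2$ becomes $\#(\tau_2\tau_1)=n$, i.e.\ Euler genus $1$. \textbf{Connectivity:} since the orbits of $\widetilde{1}_n$ are $[n]$ and $-[n]$, the condition $\pi\vee\widetilde{1}_n=1_{\pm[n]}$ holds iff some block of $\pi$ meets both; if $\tau_1(a)=b$ with $a,b\in[n]$, then commutation gives $\tau_1(-a)=-b$, so $\pi(a)=\tau_1(-a)=-b\in-[n]$ and $\{a,-b\}$ is such a block, while conversely a block $\{a,b\}$ of $\pi$ with $a\in[n]$, $b\in-[n]$ yields $\tau_1(a)=\pi(-a)=-b\in[n]$, the last hypothesis of Definition \ref{def7}. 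Assembling these equivalences gives $\tau_1\in b_1(n)\iff\pi\in\NC_2^\delta(n,-n)$.

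I expect the only step needing real thought to be connectivity --- reconciling the combinatorial requirement ``$\tau_1(a)\in[n]$ for some $a\in[n]$'' with the lattice-theoretic condition $\pi\vee\widetilde{1}_n=1_{\pm[n]}$ --- which is why I would prove the identity $\widetilde{1}_n=\tau_2\tau_0$ first, so that the two boundary circles of the annulus are concretely $[n]$ and $-[n]$. Everything else is bookkeeping with the commuting involutions $\tau_0,\tau_1$ and the conjugation-invariance of the number of cycles, which I would present tersely.
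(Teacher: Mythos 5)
Your proposal is correct and follows essentially the same route as the paper's proof: the same algebraic translation of each defining condition via the commutation $\tau_0\tau_1=\tau_1\tau_0$, the identity $\widetilde{1}_n=\tau_2\tau_0$, conjugation-invariance of cycle counts (giving $\#(\pi^{-1}\widetilde{1}_n)=\#(\tau_2\tau_1)=n$), and the observation that connectivity amounts to a block of $\pi$ meeting both orbits $[n]$ and $-[n]$. Packaging the argument as a single chain of equivalences rather than the paper's image-check plus injectivity/surjectivity is only a cosmetic difference, since the computations in both directions are the same.
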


\begin{proof}
We first show that the image of $\varphi_1$ indeed lies within $\NC_2^\delta(n,-n)$. Thus, we check the relevant conditions of definitions \ref{def4} and \ref{def5}:
\begin{enumerate}
\item $\tau_1\tau_0$ is a pairing because, by Definition \ref{def7}, it is fixed-point free and $\tau_0\tau_1=\tau_1\tau_0$ implies that $(\tau_1\tau_0)^2$ is the identity, so $\varphi_1(\tau_1)=\tau_1\tau_0$ is an involution.
\item There does not exist $r\in[n]$ such that $(-r,r)$ is a cycle of $\varphi_1(\tau_1)=\tau_1\tau_0$ because if there were such a cycle, $\tau_1$ would have $r$ and $-r$ as fixed points, which would be in contradiction with it being a pairing.
\item For every $r,s\in\pm[n]$ such that $(r,s)$ is a cycle of $\tau_1\tau_0$, we must have that
\begin{equation*}
\tau_1(-r)=\tau_1\tau_0(r)=s\,\textrm{ and }\tau_1(-s)=\tau_1\tau_0(s)=r,
\end{equation*}
so by Definition \ref{def7},
\begin{equation*}
\tau_1\tau_0(-r)=\tau_0\tau_1(-r)=-s\,\textrm{ and }\tau_1\tau_0(-s)=\tau_0\tau_1(-s)=-r.
\end{equation*}
Hence, $(-s,-r)$ is also a cycle of $\tau_1\tau_0$.
\item Note that $\widetilde{1}_n=\tau_2\tau_0$ and recall that, by the definition of $b_1(n)$, $\tau_1$ is such that $\#(\tau_2\tau_1)=n$. Then, as the conjugation and inverse of a permutation has the same number of cycles as the original permutation, we see that
\begin{equation*}
\#(\varphi_1(\tau_1)^{-1}\widetilde{1}_n)=\#(\tau_0\tau_1\tau_2\tau_0)=\#(\tau_1\tau_2)=\#(\tau_2\tau_1)=n.
\end{equation*}
As $\varphi_1(\tau_1)=\tau_1\tau_0$ is a pairing on $\pm[n]$, we thus have
\begin{equation*}
\#(\varphi_1(\tau_1))+\#(\varphi_1(\tau_1)^{-1}\widetilde{1}_n)+\#(\widetilde{1}_n)=2n+2.
\end{equation*}
\item As $\tau_1\in b_1(n)$, there exists $a\in[n]$ such that $\tau_1(a)\in[n]$, which we denote $b$. Then, $\tau_1\tau_0(-a)=b$, so $\varphi_1(\tau_1)=\tau_1\tau_0$ contains the cycle $(-a,b)$ for $a,b\in[n]$. Hence, $\varphi_1(\tau_1)\vee\widetilde{1}_n=1_{\pm[n]}$.
\end{enumerate}

Conditions (4) and (5) ensure that $\tau_1\tau_0$ is non-crossing with respect to $\widetilde{1}_n$, so is an element of $NC(\widetilde{1}_n)$. Then, conditions (2) and (3) place $\tau_1\tau_0$ in $\NC^\delta(n,-n)$ and, finally, condition (1) refines this to $\varphi_1(\tau_1)=\tau_1\tau_0\in\NC_2^\delta(n,-n)$.

Since $\tau_0$ is an involution, $\varphi_1$ is injective, so it remains to prove that it is also surjective. Thus, let $\pi\in\NC_2^\delta(n,-n)$ and check that $\varphi_1^{-1}(\pi)=\pi\tau_0$ satisfies the conditions of $b_1(n)$ given in Definition \ref{def7}:
\begin{enumerate}
\item Let $r\in\pm[n]$ and $s=\pi(r)$. Then, by Definition \ref{def5}, $s\ne r,-r$ and both $(r,s)$ and $(-s,-r)$ are cycles of $\pi$. Thus,
\begin{equation*}
\pi\tau_0(r)=\pi(-r)=-s\,\textrm{ and }\pi\tau_0(-s)=\pi(s)=r,
\end{equation*}
so $(r,-s)$ is a cycle of $\pi\tau_0$. As this holds for any $r\in\pm[n]$, $\pi\tau_0$ is a fixed-point free involution on $\pm[n]$ and $\varphi_1^{-1}(\pi)=\pi\tau_0\in\cP_2(\pm n)$.
\item Take $r\in\pm[n]$ and $s=\pi(r)$ as above. Then, both $(r,s)$ and $(-s,-r)$ are cycles of $\pi$ and we see that
\begin{equation*}
\tau_0\pi\tau_0(r)=\tau_0\pi(-r)=\tau_0(-s)=s.
\end{equation*}
As this holds for all $r\in\pm[n]$, we have $\tau_0\pi\tau_0=\pi$, so $\tau_0\varphi_1^{-1}(\pi)=\varphi_1^{-1}(\pi)\tau_0$.
\item By the above, $\tau_0\varphi_1^{-1}(\pi)=\tau_0\pi\tau_0=\pi$, which is a pairing, hence fixed-point free.
\item We recall that $\widetilde{1}_n=\tau_2\tau_0$ and that the conjugation and inverse of a permutation has the same number of cycles as the original permutation. Thus,
\begin{equation*}
\#(\tau_2\varphi_1^{-1}(\pi))=\#(\tau_0\tau_2\pi)=\#(\pi^{-1}\tau_2\tau_0)=\#(\pi^{-1}\widetilde{1}_n)=n,
\end{equation*}
where the last equality follows from Definition \ref{def4}.
\item As $\pi\vee\widetilde{1}_n=1_{\pm[n]}$, there exists $a\in[n]$ such that $\pi(-a)\in[n]$, which we denote $b$. Then,
\begin{equation*}
\varphi_1^{-1}(\pi)(a)=\pi\tau_0(a)=\pi(-a)=b\in[n].
\end{equation*}
\end{enumerate}
Having shown that $\varphi_1$ is injective and surjective, we are done.
\end{proof}

\subsection{Order \texorpdfstring{$1/N^2$}{1/N2} Corrections to the GUE Moments} \label{s2.2}

We now consider $H$ an $N\times N$ GUE matrix. Going by Proposition \ref{prop2}, we have for even $n\in\N$ the polynomial expression
\begin{equation*}
m_n^{(\mathrm{GUE})}=\E\Tr H^n=\frac{|a_0(n)|}{2^{n/2}}N^{n/2+1}+\frac{|a_1(n)|}{2^{n/2}}N^{n/2-1}+\mathrm{O}(N^{n/2-3}),
\end{equation*}
equivalently the terminating $1/N$ expansion for $\widetilde{H}=\sqrt{\frac{2}{N}}H$,
\begin{equation*}
\frac{1}{N}\E\Tr\widetilde{H}^n=|a_0(n)|+\frac{|a_1(n)|}{N^2}+\mathrm{O}(N^{-4}).
\end{equation*}
At leading order, we have the Catalan numbers, in exact agreement with the equivalent moments of the GOE. However, the next to leading order correction is now of order $1/N^2$ rather than $1/N$. In the spirit of \cite{mingo2019}, we now define a new set of non-crossing annular pairings and then prove that they are in bijection with $a_1(n)$, thus characterising said correction.

\begin{definition}[Toroidal annular pairings] \label{def8}
Let $n\in\N$ be even and $1\le u<v< n$. Recalling Definition \ref{def5}, we define
\begin{align}
\NC_{2,u,v}^{\rm T}(n)&:=\left\{\pi\in\NC_2\left(1_n(u-1,v)\right)\,\middle\vert\,\begin{array}{l}(u,v)\in\pi\textrm{ and for all }a\in[u-1],\\\pi(a)\in[n]\setminus\{u,\ldots,v\}\end{array}\right\},
\\\NC_2^{\rm T}(n)&:=\bigcup_{1\le u<v< n}\NC_{2,u,v}^{\rm T}(n),
\end{align}
where we say $(0,v)\equiv(n,v)$ and $[0]\equiv\emptyset$.
\end{definition}
As $1_n(u-1,v)=(u,\ldots,v)(1,\ldots,u-1,v+1,\ldots,n)$, we may see $\NC_{2,u,v}^{\rm T}(n)$ as the set of non-crossing pairings on an annulus with the outer circle labelled clockwise by $u,\ldots,v$ and the inner circle labelled $1,\ldots,u-1,v+1,\ldots,n$ in an anticlockwise fashion. The second condition in the definition of $\NC_{2,u,v}^{\rm T}(n)$ simply means that there are no \textit{through strings} connecting the labels $1,\ldots,u-1$ on the inner circle of the annulus to any of the labels on the outer circle; see Figure \ref{fig8c} for an example.

The main result of this subsection is that, as subsets of $\cP_2(n)$, $a_1(n)$ and $\NC_2^{\rm T}(n)$ are \textit{equal} --- it is only a matter of interpretation of the pairings. To see this, we first recall from Lemma \ref{lemma1} that the genus one orientable ribbon graphs of $a_1(n)$ can be embedded into a torus without any crossings. In terms of the fundamental polygon of the torus, the prescription then is to represent the torus as a square with opposite sides identified without any twists and then draw the ribbon graph on this square, exploiting traversal through the boundaries to avoid crossings. Note that since the ribbon graphs of interest are of genus one, at least two edges of the ribbon graph must traverse the boundaries of the fundamental polygon of the torus in a non-trivial way --- one through the top-bottom boundary and the other through the left-right boundary. Moreover, letting $u\in[n]$ be the minimal label involved in such an edge, we are able to draw the edge $(u,\pi(u))$ as a vertical line (see Figure \ref{fig8a}). Note that this $u$ coincides with the minimal element of $[n]$ such that $(u,\pi(u))$ crosses with another cycle, i.e., there exists some $u'\in[n]$ such that $u<u'<\pi(u)<\pi(u')$.

Now, the key idea behind our mapping from $a_1(n)$ to $\NC_2^{\rm T}(n)$ is that cutting a torus along a non-trivial cycle produces a cylinder, equivalently annulus, and that the edge $(u,\pi(u))$ described above identifies such a cycle. Thus, we begin by drawing a toroidal ribbon graph as described above, then cut vertically immediately on the left of the $(u,\pi(u))$ edge, splitting the fundamental polygon of the torus and indeed the vertex or $n$-gon of the ribbon graph into two. Then, we glue along the left and right sides of the fundamental polygon of the torus so that the cut now serves as the new left and right sides. Identifying the top and bottom then forms the desired non-crossing annular pairing; see Figure \ref{fig8} for an example.

\begin{figure}
    \centering
    \begin{subfigure}{0.32\textwidth}
        \centering
        \captionsetup{justification=centering}
        \includegraphics[width=0.95\textwidth]{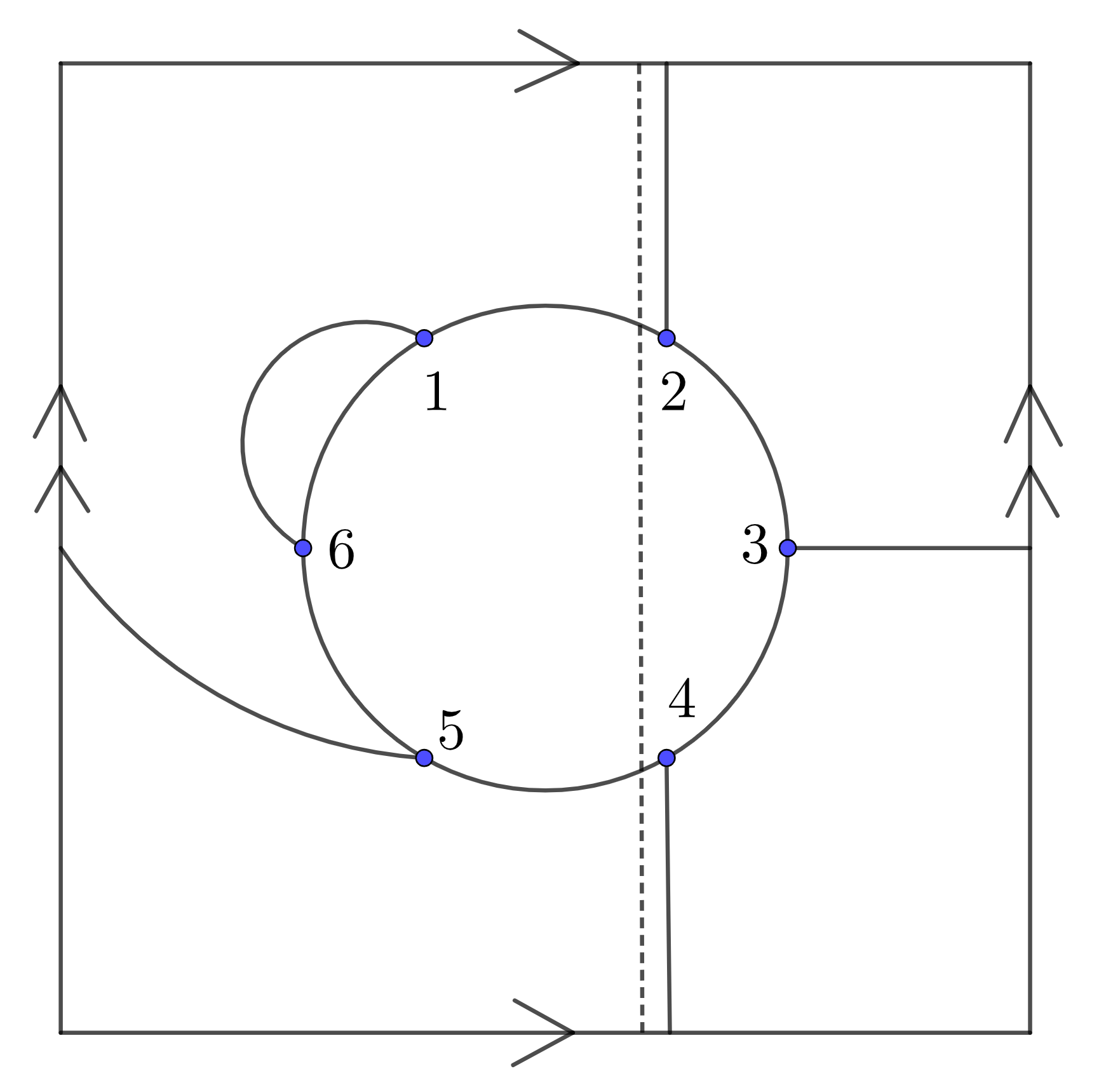}
        \caption{A ribbon graph on the torus with a cut along $(2,4)$.} \label{fig8a}
    \end{subfigure}\hfill
    \begin{subfigure}{0.32\textwidth}
        \centering
        \captionsetup{justification=centering}
        \includegraphics[width=0.95\textwidth]{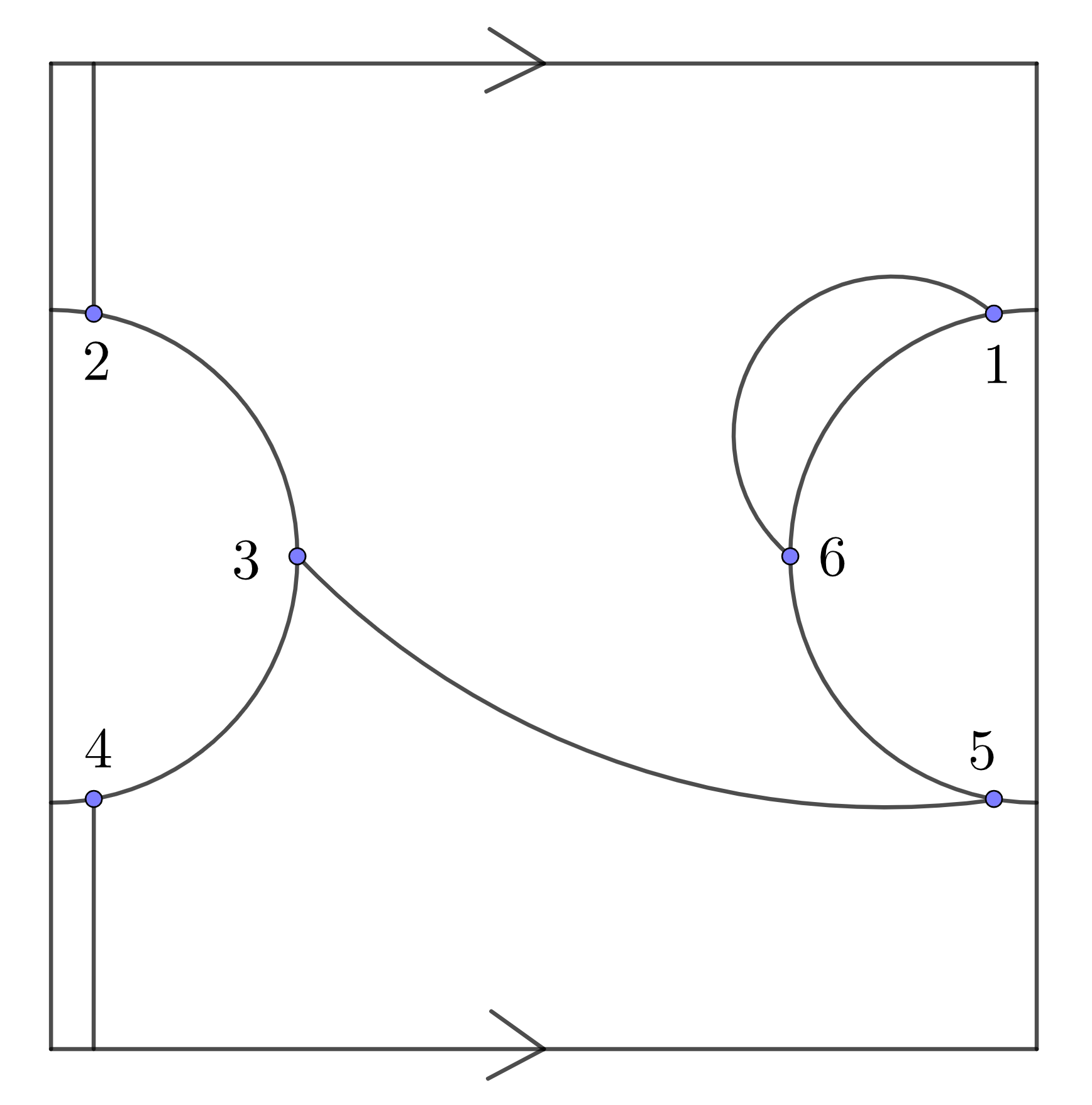}
        \caption{After gluing along the left-right boundary.} \label{fig8b}
    \end{subfigure}\hfill
    \begin{subfigure}{0.32\textwidth}
        \centering
        \captionsetup{justification=centering}
        \includegraphics[width=0.95\textwidth]{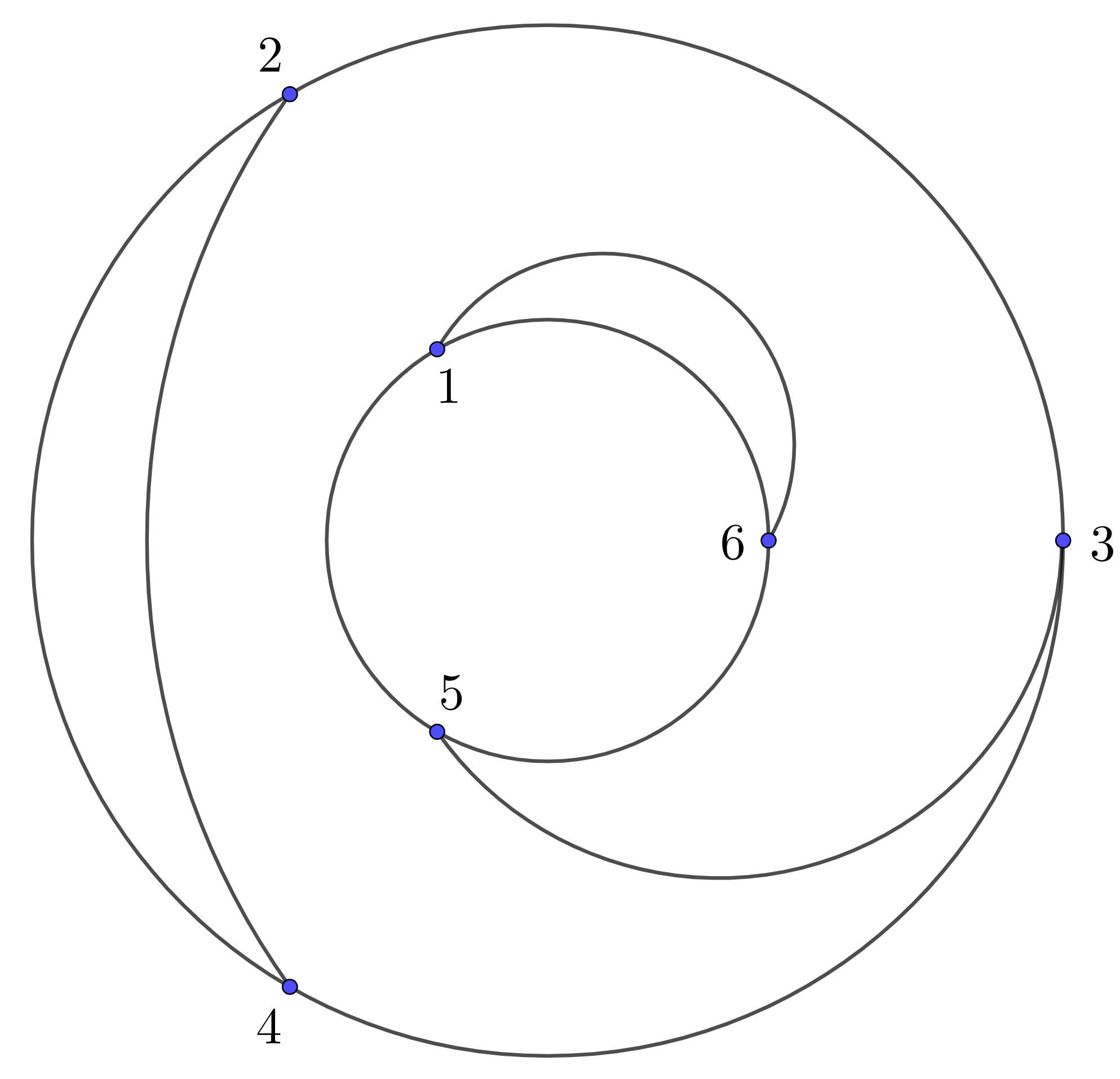}
        \caption{A non-crossing annular pairing in $\NC_{2,2,4}^{\rm T}(6)$.} \label{fig8c}
    \end{subfigure}
    \caption{Given $\tau_1\in a_1(6)$, we may draw it on the fundamental polygon of the torus as in (a), taking care to draw the edge $(2,4)$ vertically, as $2\in[6]$ is the minimal label involved in a crossing. Cutting along the left of the $(2,4)$ edge, as indicated by the dotted line, and then gluing along the left and right boundaries of the torus produces (b). Identifying the top and bottom then yields the non-crossing annular pairing (c).} \label{fig8}
\end{figure}

For the inverse mapping, we cut the annulus along a line connecting a point on the outer circle between $u$ and $v$, the minimal and maximal labels on the outer circle, with a point on the inner circle between $u-1$ and $v+1$. Then, identifying the edges of the resulting rectangle that were the boundaries of the original annulus produces a torus. Grouping the now-adjacent labels results in the desired ribbon graph.

We give the combinatorial proof of our mapping being a bijection.
\begin{proposition}[Bijection for $a_1(n)$] \label{prop6}
Let $n\in\N$ be even and recall the specifications of $a_1(n)$ and $\NC_2^{\rm T}(n)$ given in definitions \ref{def7} and \ref{def8}. Then, we have the equality of subsets of $\cP_2(n)$,
\begin{equation*}
a_1(n)=\NC_2^{\rm T}(n).
\end{equation*}
\end{proposition}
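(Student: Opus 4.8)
The plan is to prove the asserted equality of subsets of $\cP_2(n)$ by establishing the two inclusions separately, after first recording the elementary permutation arithmetic that translates Definition~\ref{def8} into cycle-count statements. For $1\le u<v<n$ put $\gamma_{u,v}:=1_n(u-1,v)$ (with the convention $(0,v)\equiv(n,v)$). A short computation shows that $\gamma_{u,v}$ has exactly two cycles --- the ``outer circle'' $(u,u+1,\ldots,v)$ and the ``inner circle'' $(1,\ldots,u-1,v+1,\ldots,n)$ --- that $\gamma_{u,v}(v)=u$, and that $1_n=\gamma_{u,v}(u-1,v)$. Consequently, for any $\pi\in\cP_2(n)$ one has $\#(\pi)=n/2$, $\#(\gamma_{u,v})=2$, and $\#(\pi^{-1}1_n)=\#(\pi^{-1}\gamma_{u,v})\pm1$, since right-multiplying a permutation by a transposition changes the number of cycles by exactly one (splitting a cycle when the two transposed points share a cycle, merging two cycles otherwise). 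So by Definition~\ref{def4}, ``$\pi\in\NC_2(\gamma_{u,v})$'' reduces to the connectivity condition $\pi\vee\gamma_{u,v}=1_{[n]}$ together with $\#(\pi^{-1}\gamma_{u,v})=n/2$, and the whole argument will hinge on pinning down the sign of that $\pm1$; I intend to do this each time by exhibiting the relevant fixed point or shared cycle.

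For $\NC_2^{\rm T}(n)\subseteq a_1(n)$ I would take $\pi\in\NC_{2,u,v}^{\rm T}(n)$; it is a pairing of $[n]$ and, being non-crossing with respect to $\gamma_{u,v}$, satisfies $\#(\pi^{-1}\gamma_{u,v})=n/2$. Since $(u,v)\in\pi$ gives $\pi^{-1}(u)=v$, and $\gamma_{u,v}(v)=u$, the element $v$ is a fixed point of $\pi^{-1}\gamma_{u,v}$, hence lies in a cycle disjoint from $u-1$ (note $u-1\neq v$). Therefore $(u-1,v)$ \emph{merges} cycles and $\#(\pi^{-1}1_n)=n/2-1$, which is precisely the condition defining $a_1(n)$ in Definition~\ref{def7}. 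This direction is routine.

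The substance is $a_1(n)\subseteq\NC_2^{\rm T}(n)$. Starting from $\pi\in a_1(n)$, so $\#(\pi^{-1}1_n)=n/2-1\neq n/2+1$, the pairing $\pi$ is not non-crossing with respect to $1_n$, so by Remark~\ref{rmk3} at least two of its blocks cross. I would then let $u$ be the least element of $[n]$ lying in a crossing block, and argue in turn: $\pi(u)>u$ (else $\pi(u)$ is a smaller such element), so set $v:=\pi(u)$; for a block $\{a,b\}$ with $a<b$ that crosses $\{u,v\}$, minimality of $u$ rules out the interleaving $a<u<b<v$, forcing $u<a<v<b$; thus $b>v$ gives $v<n$, and $\{a,b\}$ has $a$ on the outer and $b$ on the inner circle of $\gamma_{u,v}$, so $\pi\vee\gamma_{u,v}=1_{[n]}$. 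The count $\#(\pi^{-1}\gamma_{u,v})=n/2$ is then automatic, because $\pi^{-1}1_n(u-1)=\pi^{-1}(u)=v$ puts $u-1$ and $v$ in the \emph{same} cycle of $\pi^{-1}1_n$, so $(u-1,v)$ \emph{splits} it and $\#(\pi^{-1}\gamma_{u,v})=\#(\pi^{-1}1_n)+1=n/2$; with the join condition this gives $\pi\in\NC_2(\gamma_{u,v})$, and $(u,v)\in\pi$ by construction. Finally, for the no-through-string clause of Definition~\ref{def8}: if some $a'\in[u-1]$ had $\pi(a')\in\{u,\ldots,v\}$, then after the easy exclusions $\pi(a')\in\{u,v\}$ one gets $a'<u<\pi(a')<v$, so $\{a',\pi(a')\}$ crosses $\{u,v\}$ --- contradicting that $u$ is the least element in a crossing block. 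Hence $\pi\in\NC_{2,u,v}^{\rm T}(n)\subseteq\NC_2^{\rm T}(n)$.

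The hard part will be the second inclusion, and within it the correct choice of the pair $(u,v)$: one must verify that the least element appearing in any crossing is automatically the smaller endpoint of its block, that its crossing partner necessarily interleaves as $u<a<v<b$ (this single fact simultaneously delivers $v<n$, the connectivity condition, and the absence of through strings from $\{1,\ldots,u-1\}$), and that this combinatorial recipe faithfully encodes the geometric operation of cutting the torus just to the left of the edge $(u,\pi(u))$. The cycle-count bookkeeping is short, but the sign of the $\pm1$, and the boundary convention $u=1$ (where $u-1\equiv n$ and $[u-1]=\emptyset$), must be handled carefully, since that sign is exactly what distinguishes genus $0$ from genus $1$.
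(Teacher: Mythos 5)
Your proposal is correct and follows essentially the same route as the paper's proof: both directions use the cycle-count identity $\#(\pi^{-1}1_n(u-1,v))=\#(\pi^{-1}1_n)\pm1$ with the sign pinned down by whether $u-1$ and $v$ share a cycle, and the inclusion $a_1(n)\subseteq\NC_2^{\rm T}(n)$ is driven by the same choice of $u$ as the minimal element in a crossing block, which simultaneously yields the join condition and the absence of through strings from $[u-1]$. The only cosmetic difference is that in the reverse inclusion you argue via the fixed point $v$ of $\pi^{-1}1_n(u-1,v)$ (merging cycles) while the paper reuses its splitting argument, which is equivalent.
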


\begin{proof}
Let us first prove that $a_1(n)\subseteq\NC_2^{\rm T}(n)$. Thus, letting $\pi\in a_1(n)$, we observe the following:
\begin{enumerate}
    \item By the definition of $a_1(n)$,
    \begin{equation*}
    \#(\pi^{-1}1_n)=n/2-1\implies\#(\pi)+\#(\pi^{-1}1_n)=n-1.
    \end{equation*}
    Hence, by Definition \ref{def4}, $\pi$ is crossing with respect to $1_n$. Therefore, according to Remark \ref{rmk3}, there exist $1\le u<u'<v<v'\le n$ such that $(u,v),(u',v')\in\pi$. Let $u\in[n]$ to be minimal such that $(u,\pi(u))$ is crossing and let $u'\in[n]$ be such that $u<u'<\pi(u)<\pi(u')$. Let us also set $v=\pi(u)$ and $v'=\pi(u')$.
    \item Then, $u'\in \{u,\dots,v\}$ and $v'\in [n]\setminus \{u,\dots,v\}$, so $u',v'$ belong to distinct cycles of $1_n(u-1,v)$. Thus, $(u',v')$ is a through string of $\pi$ and $\pi \vee 1_n(u-1,v) =1_{[n]}$.
    \item As $\pi^{-1}1_n(u-1)=\pi^{-1}(u)=v$, $u-1$ and $v$ are in the same cycle, say $\sigma$, of $\pi^{-1}1_n$. It is straightforward to check that multiplying $\sigma$ by $(u-1,v)$ splits $\sigma$ into a product of two cycles. It follows that $\#(\pi^{-1}1_n(u-1,v))=\#(\pi^{-1}1_n)+1$, consequently (using again the definition of $a_1(n)$),
    \begin{equation} \label{equ:torusid}
    \#(\pi)+\#(\pi^{-1}1_n(u-1,v))=\#(\pi)+\#(\pi^{-1}1_n)+1=n.
    \end{equation}
    \item For all $a\in[u-1]$, $\pi(a)\in[n]\setminus\{u,\ldots,v\}$, as the existence of $a\in[u-1]$ with $\pi(a)\in\{u,\ldots,v\}$ would contradict the definition of $u$.
\end{enumerate}
Upon recalling that $1_n(u-1,v)$ contains two cycles, conditions (2) and (3) say that $\pi\in \NC_2(1_n(u-1,v))$, in accordance with definitions \ref{def4} and \ref{def5}, while condition (4) says that $\pi\in \NC^{\rm T}_{2,u,v}(n)$, hence $\pi\in \NC_2^{\rm T}(n)$. 

We now prove that $\NC_2^{\rm T}(n)\subseteq a_1(n)$. Thus, taking $\pi\in \NC_2^{\rm T}(n)$, we check the conditions of Definition \ref{def7} pertaining to $a_1(n)$:
\begin{enumerate}
    \item As $\pi\in\NC_2^{\rm T}(n)$, there exists $(u,v)\in\pi$ such that $\pi\in \NC^{\rm T}_{2,u,v}(n)$.
    \item Thus, we have that $\pi\in\NC_2(1_n(u-1,v))$, so by definitions \ref{def4} and \ref{def5},
    \begin{equation*}
        \#(\pi)+\#(\pi^{-1}1_n(u-1,v))=n,
    \end{equation*}
    where we recall that $1_n(u-1,v)$ has two cycles. By the logic of condition (3) above, we have that $\#(\pi^{-1}1_n(u-1,v))=\#(\pi^{-1}1_n)+1$, so
    \begin{align*}
    \#(\pi)+\#(\pi^{-1}1_n) & =\#(\pi)+\#(\pi^{-1}1_n(u-1,v))-1=n-1 \\
    & \implies\#(\pi^{-1}1_n)=n/2-1.
    \end{align*}
\end{enumerate}
These conditions are equivalent to the definition of $a_1(n)$, so we have shown that $a_1(n)\subseteq\NC_2^{\rm T}(n)\subseteq a_1(n)$ and we are done.
\end{proof}

\subsection{Order \texorpdfstring{$1/N^2$}{1/N2} Corrections to the GOE Moments} \label{s2.3}

Returning to the setting of \S\ref{s2.1} concerning the GOE, one has from Proposition \ref{prop2} that the refinement of the finite expansions \eqref{mngoe1} and \eqref{mngoe2} are respectively
\begin{align*}
m_n^{(\mathrm{GOE})}&=\frac{|a_0(n)|}{2^n}N^{n/2+1}+\frac{|b_1(n)|}{2^n}N^{n/2}
\\&\quad+\frac{|a_1(n)|+|b_2(n)|}{2^n}N^{n/2-1}+\mathrm{O}(N^{n/2-2}),
\\\frac{1}{N}\E\Tr\widetilde{H}^n&=|a_0(n)|+\frac{|b_1(n)|}{N}+\frac{|a_1(n)|+|b_2(n)|}{N^2}+\mathrm{O}(N^{-2}).
\end{align*}
As we have derived sets of non-crossing annular pairings that are in bijection with $a_1(n)$ and $b_1(n)$, we may now turn towards characterising the $1/N^2$ corrections to the GOE moments as the enumerations of non-cross annular pairings. It remains to construct a class of such pairings in bijection with $b_2(n)$, which is the set of ribbon graphs of $n$ half-edges that can be drawn on the Klein bottle without any crossings or twists. It turns out that this can be done by combining the ideas of the previous two subsections. Indeed, the key observation is that the torus is a double cover of the Klein bottle. Thus, let us define a new class of non-crossing annular pairings.

\begin{definition}[Klein bottle annular pairings]\label{def9}
Let $n\in\N$ be even and $1\le u<v<n$. Recalling that $\widetilde{1}_n=(1,\ldots,n)(-n,\ldots,-1)\in \PS_{\pm n}$, define
\begin{align*}
\widetilde{1}_{n,u,v}&:=\widetilde{1}_n(-u,v-1)(-v,u-1)
\\&\;=(u,1-v)(v,1-u)\widetilde{1}_n
\\&\;=(u,\ldots,v-1,1-u,\ldots,-1,-n,\ldots,-v)(v,\ldots,n,1,\ldots,u-1,1-v,\ldots,-u),
\end{align*}
where we write $(-v,0)\equiv(-v,n)$ and $(v,0)\equiv(v,-n)$. The analogues of $\NC_{2,u,v}^{\rm T}(n)$ and $\NC_2^{\rm T}(n)$ of Definition \ref{def8} are
\begin{align}
\NC_{2,u,v}^{\rm K}(n)&:=\left\{\pi\in\NC_2^\delta(\widetilde{1}_{n,u,v})\,\middle\vert\,\begin{array}{l}(u,-v)\in\pi\textrm{ and for all}\\a\in[u-1],\,\pi(a)\in[n]\end{array}\right\},
\\ \NC_2^{\rm K}(n)&:=\bigcup_{1\le u<v<n}\NC_{2,u,v}^{\rm K}(n),
\end{align}
where we retain the convention $[0]\equiv\emptyset$.
\end{definition}

We proceed by starting at Lemma \ref{lemma1}, as before. Thus, draw the fundamental polygon of the Klein bottle, which is a square with the left and right sides identified without a twist and the top and bottom sides identified with a twist. Then, given a ribbon graph $\tau_1$ of $b_2(n)$, we draw it on the fundamental polygon of the Klein bottle, ensuring that, for $u\in[n]$ minimal such that $\tau_1(u)\in[n]$, the edge $(u,\tau_1(u))$ is drawn vertically and there is at least one edge traversing the left and right sides of the fundamental polygon at hand. Then, to produce an element of $\NC_2^{\rm K}(n)$, we go through the following algorithm:
\begin{enumerate}
\item Draw an inverted copy of the ribbon graph with all labels replaced by their negatives.
\item Glue the bottom of the original ribbon graph to the top of its inverted copy in order to form its double cover, a ribbon graph on the torus.
\item Now, cut immediately on the left of the half-edges labelled $u,-\tau(u)$ and immediately on the right of the half-edges labelled $-u,\tau(u)$, with these cuts being connected via cuts through the vertices of the ribbon graph.
\item Glue the left and right sides of the torus together so that the cut of the previous step serves as the new left and right sides.
\item Identify the top and bottom to form an annulus with the outer and inner circles of the annulus inheriting labels from the ribbon graph vertices that we now remove.
\end{enumerate}

\begin{figure}
    \centering
    \begin{subfigure}{0.32\textwidth}
        \centering
        \captionsetup{justification=centering}
        \includegraphics[width=0.9\textwidth]{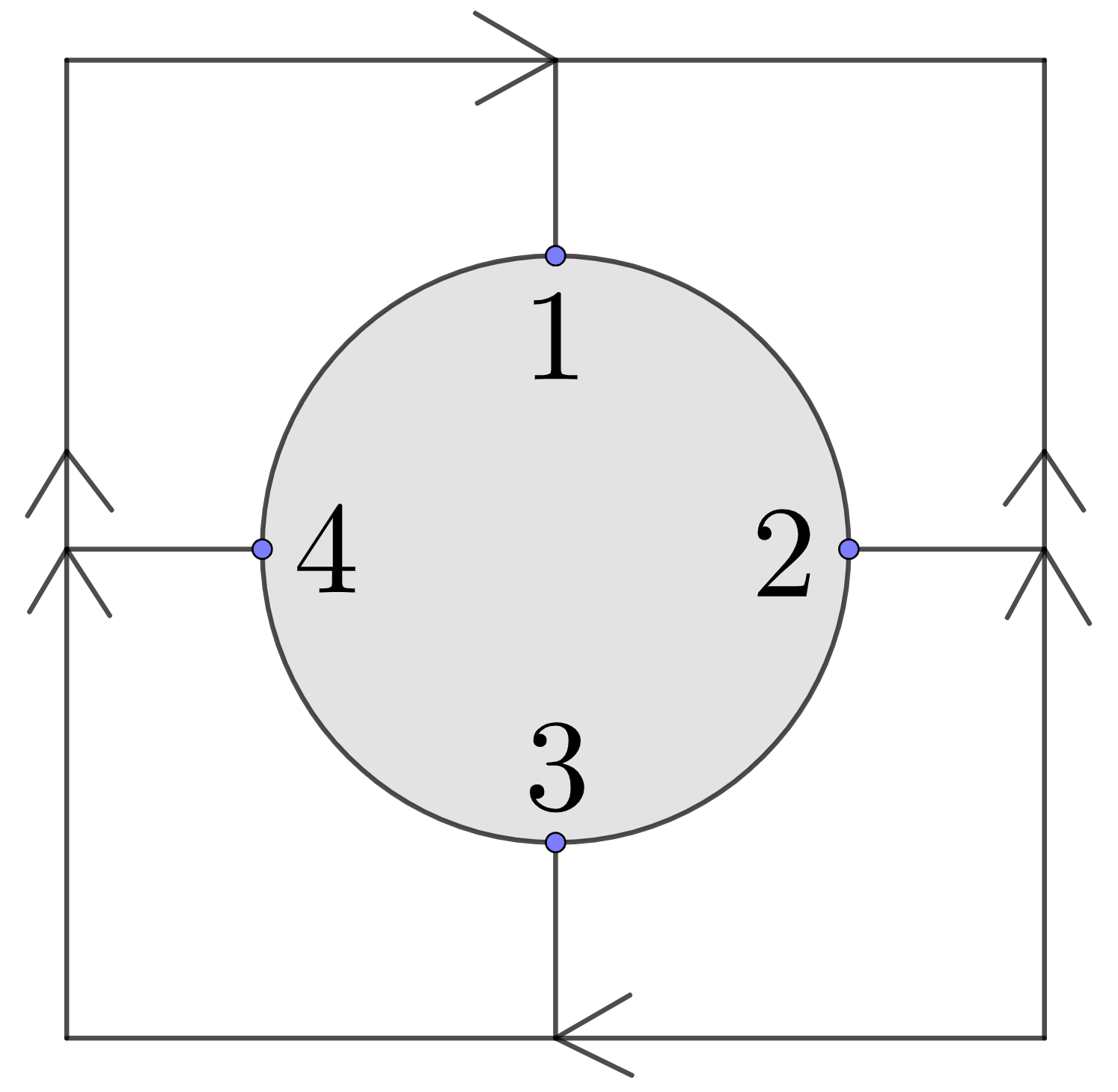}
        \caption{Klein bottle ribbon graph.} \label{fig9a}
    \end{subfigure}\hfill
    \begin{subfigure}{0.32\textwidth}
        \centering
        \captionsetup{justification=centering}
        \includegraphics[width=0.9\textwidth]{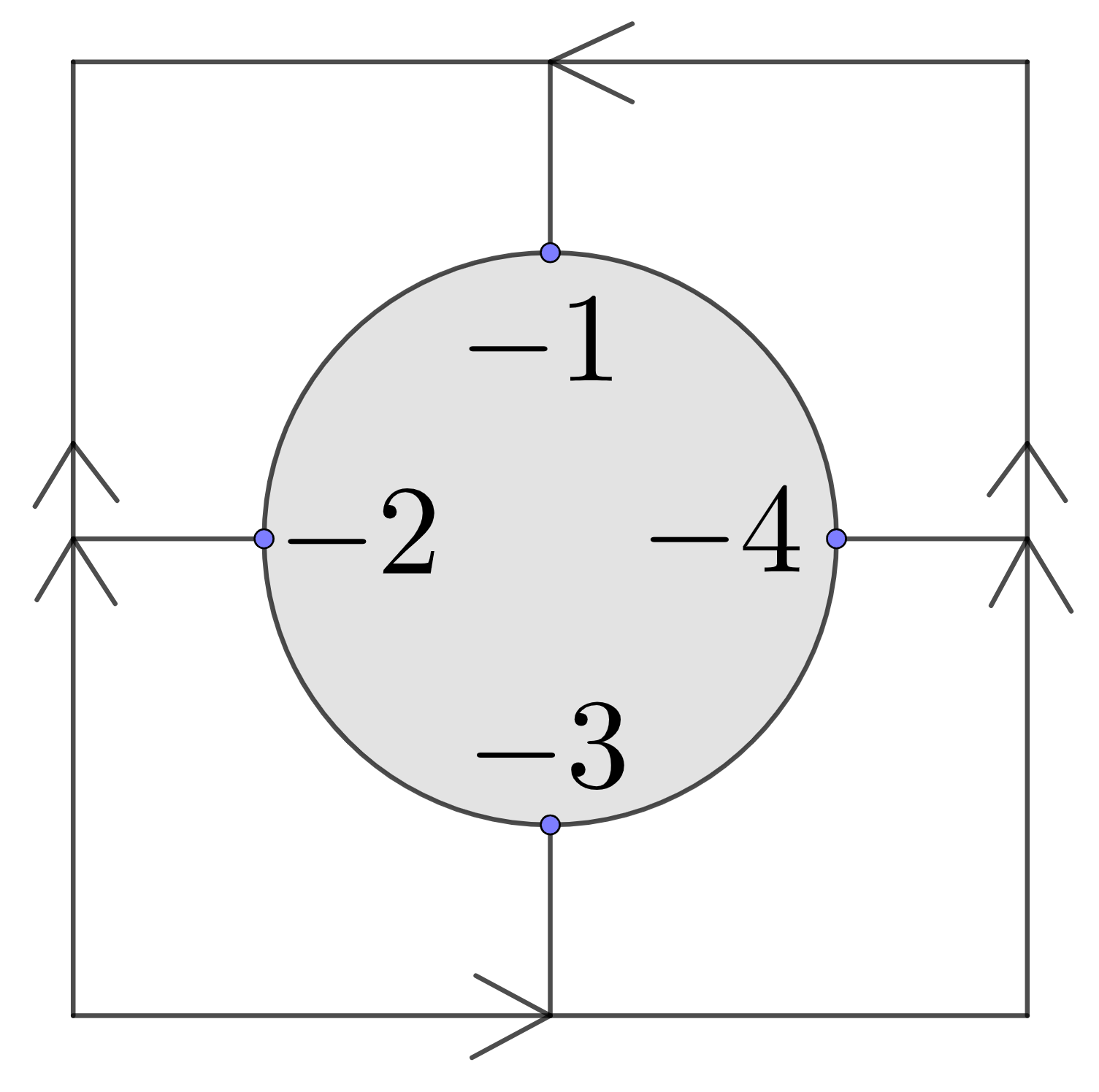}
        \caption{Inverted copy of the graph.} \label{fig9b}
    \end{subfigure}\hfill
    \begin{subfigure}{0.32\textwidth}
        \centering
        \captionsetup{justification=centering}
        \includegraphics[width=0.9\textwidth]{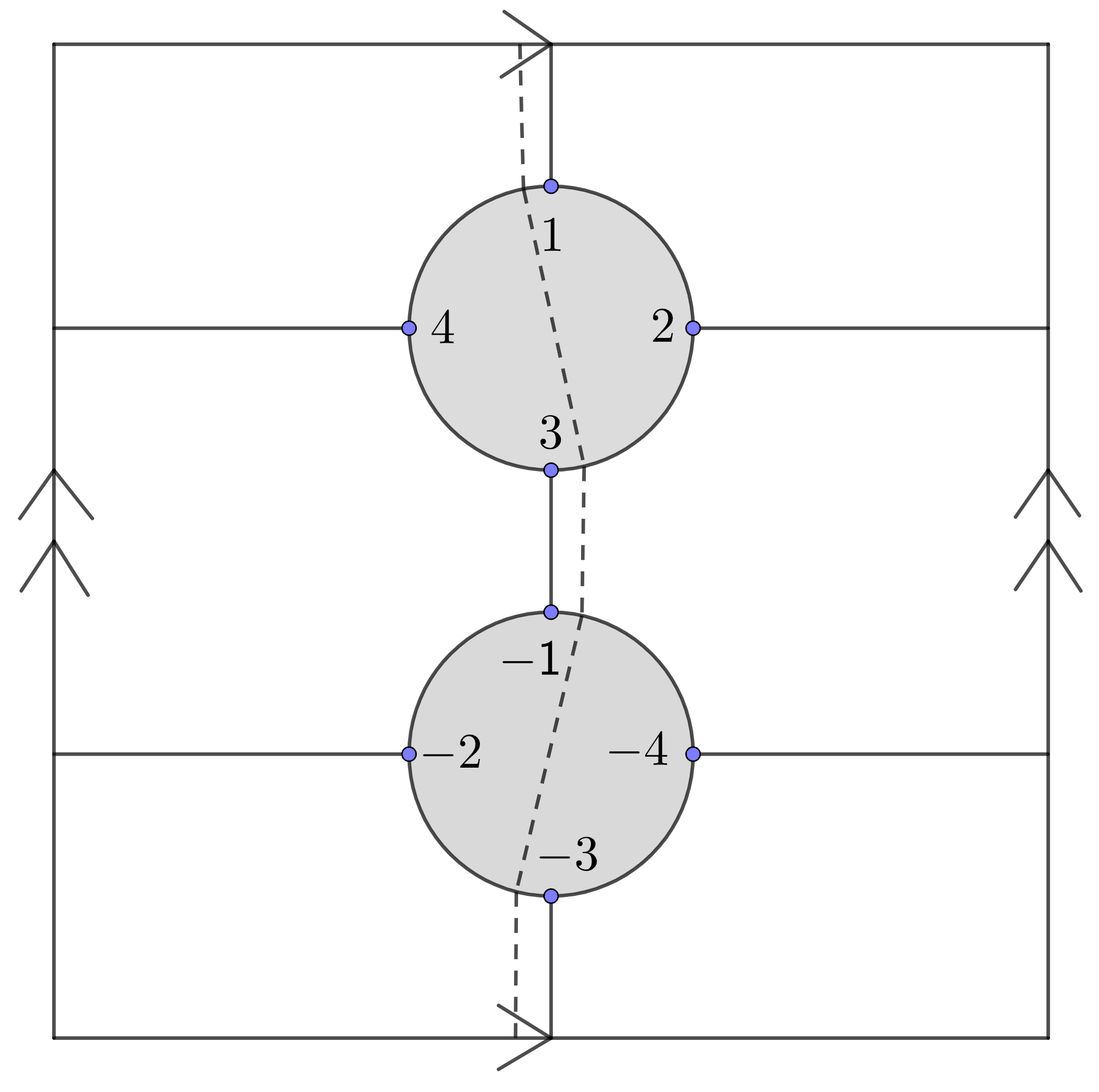}
        \caption{Double cover on the torus.} \label{fig9c}
    \end{subfigure}

    \begin{subfigure}{0.49\textwidth}
        \centering
        \captionsetup{justification=centering}
        \includegraphics[width=0.8\textwidth]{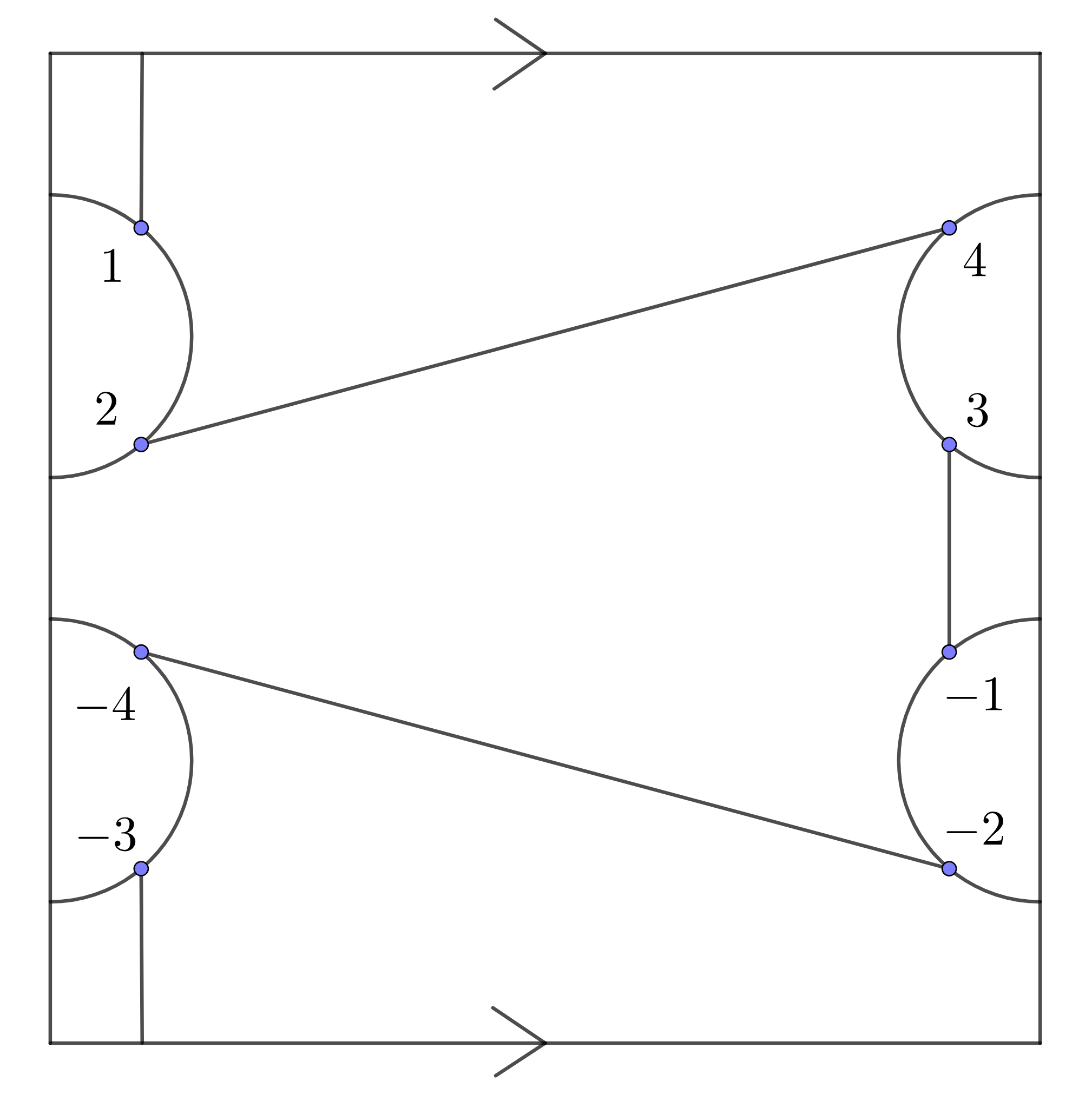}
        \caption{After cutting and regluing.} \label{fig9d}
    \end{subfigure}\hfill
    \begin{subfigure}{0.49\textwidth}
        \centering
        \captionsetup{justification=centering}
        \includegraphics[width=0.8\textwidth]{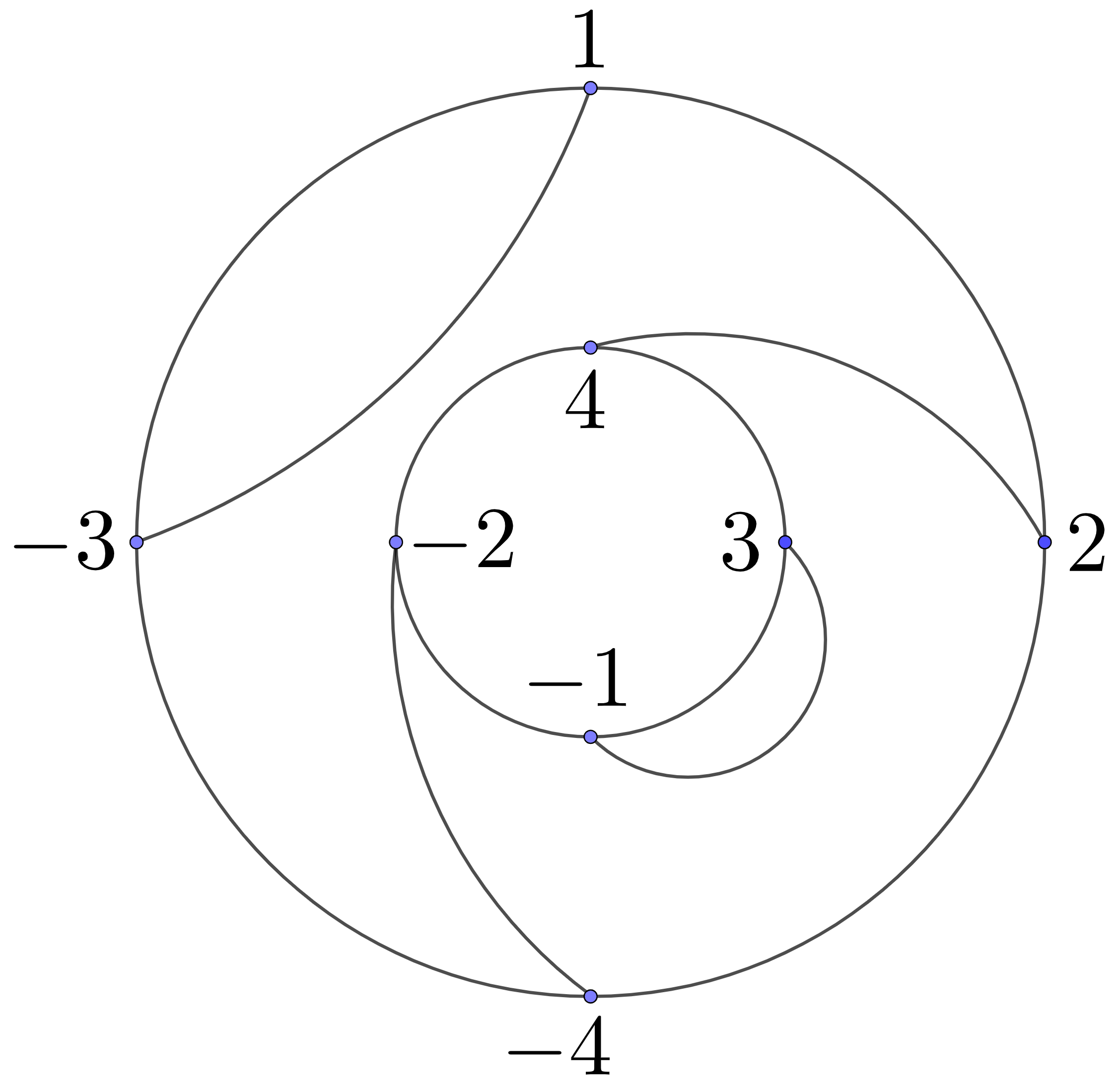}
        \caption{The non-crossing annular pairing.} \label{fig9e}
    \end{subfigure}
    \caption{Given $\tau_1\in b_2(4)$, consider its restriction to the positively-labelled half-edges obtained by quotienting the set of quarter-edges $\pm[4]$ by the action of $\tau_0$; this is (a). To obtain a symmetric non-crossing annular pairing from this graph, proceed by drawing an inverted (reflected) copy of it with labels replaced by their negatives, (b), then glue the bottom boundary of (a) to the top boundary of (b) to produce (c), a two-vertex ribbon graph on a torus. Cutting along the dotted line as prescribed above and gluing along the left and right sides produces (d). Finally, gluing the top and bottom together and excising the ribbon graph vertices yields the annular pairing (e), which is non-crossing with respect to $\widetilde{1}_{4,1,3}=(1,2,-4,-3)(3,4,-2,-1)$.}
\end{figure}

We do not give a topological description of the inverse mapping and instead move onto the combinatorial statement of our bijection.

\begin{proposition}[Bijection for $b_2(n)$]\label{prop7}
Let $n\in\N$ be even and recall the specifications of $\NC_2^{\rm K}(n)$ and $b_k(n)$ given in definitions \ref{def9} and \ref{def7}. Then, the mapping
\begin{align*}
\varphi_2:b_2(n)&\to\NC_2^{\rm K}(n),
\\\tau_1&\mapsto\tau_1\tau_0
\end{align*}
is a bijection, where we recall that $\tau_0=(1,-1)(2,-2)\cdots(n,-n)$.
\end{proposition}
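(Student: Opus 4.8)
The plan is to mimic the structure of the proof of Proposition \ref{prop5} as closely as possible, now with the role of $\widetilde{1}_n$ replaced by $\widetilde{1}_{n,u,v}$ and with the extra bookkeeping needed to track which $u,v$ arise. First I would show that $\varphi_2$ is well-defined, i.e.\ that $\varphi_2(\tau_1)=\tau_1\tau_0$ lands in $\NC_2^{\rm K}(n)$. The verification that $\tau_1\tau_0$ is a fixed-point free involution on $\pm[n]$ satisfying the $\delta$-symmetry conditions (no cycle $(-r,r)$, and $(r,s)\in\tau_1\tau_0\Rightarrow(-s,-r)\in\tau_1\tau_0$) is \emph{verbatim} the argument of items (1)--(3) in the proof of Proposition \ref{prop5}, since those used only that $\tau_1$ is a pairing with $\tau_0\tau_1=\tau_1\tau_0$; I would cite that proof rather than repeat it. What is genuinely new is: (a) identifying the pair $(u,v)$: let $u\in[n]$ be minimal with $\tau_1(u)\in[n]$, which exists by the last defining condition of $b_2(n)$, and set $v=\tau_1(u)$ (after possibly relabelling so that $u<v$, which one should check is automatic or else adjust conventions); then $(u,-v)\in\tau_1\tau_0$ since $\tau_1\tau_0(u)=\tau_1(-u)=\tau_0\tau_1(u)=-v$, giving the first requirement in the definition of $\NC_{2,u,v}^{\rm K}(n)$; (b) the cycle-count condition, which I would get from the computation
\begin{equation*}
\#(\varphi_2(\tau_1)^{-1}\widetilde{1}_{n,u,v})=\#(\tau_0\tau_1\widetilde{1}_{n,u,v})
\end{equation*}
together with the identity $\widetilde{1}_{n,u,v}=\tau_2\tau_0(-u,v-1)(-v,u-1)$ coming from $\widetilde{1}_n=\tau_2\tau_0$, reducing the count to $\#(\tau_2\tau_1\cdot(\textrm{two transpositions}))$; the point is that conjugating/inverting preserves cycle number and multiplying $\tau_2\tau_1$ by each of the two transpositions $(-u,v-1)$, $(-v,u-1)$ \emph{splits} a cycle (one argues, as in item (3) of the proof of Proposition \ref{prop6}, that the two elements of each transposition lie in a common cycle), so starting from $\#(\tau_2\tau_1)=n+2(1-2)=n-2$ for $\tau_1\in b_2(n)$ one lands at $n$, which is exactly $2n+2-\#(\varphi_2(\tau_1))-\#(\widetilde{1}_{n,u,v})=2n+2-n-2=n$; (c) the two "through-string'' conditions $\pi\vee\widetilde{1}_{n,u,v}=1_{\pm[n]}$ and $\pi(a)\in[n]$ for $a\in[u-1]$: the first follows because the cycle $(u,-v)$ meets both cycles of $\widetilde{1}_{n,u,v}$ (read off from the explicit two-line form of $\widetilde{1}_{n,u,v}$), and the second is precisely the translation of minimality of $u$, namely that no $a\in[u-1]$ has $\tau_1(a)\in[n]$, so $\pi(a)=\tau_1\tau_0(a)=\tau_1(-a)$ — here I must be careful that the indexing matches the $b_2(n)$ side, but this should come out of the definitions directly.

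Next I would establish injectivity, which is immediate since $\tau_0$ is an involution, so $\varphi_2^{-1}$ acts as $\pi\mapsto\pi\tau_0$ on the image. Then for surjectivity I would take $\pi\in\NC_2^{\rm K}(n)$, say $\pi\in\NC_{2,u,v}^{\rm K}(n)$, and verify that $\pi\tau_0$ satisfies the defining conditions of $b_2(n)$ in Definition \ref{def7}: that $\pi\tau_0\in\cP_2(\pm n)$ and $\tau_0(\pi\tau_0)=(\pi\tau_0)\tau_0$ with $\tau_0\pi\tau_0$ fixed-point free follow exactly as in items (1)--(3) of the surjectivity half of Proposition \ref{prop5} (they used only the $\delta$-symmetry of $\pi$); that there exists $a\in[n]$ with $(\pi\tau_0)(a)\in[n]$ follows from the condition $(u,-v)\in\pi$, since $(\pi\tau_0)(-v)=\pi(v)$ and one checks from $\delta$-symmetry and $(u,-v)\in\pi$ that $\pi(v)=-u$... wait — more directly, $(\pi\tau_0)(v)=\pi(-v)=u\in[n]$, so $a=v$ works; and the Euler-genus condition $\#(\tau_2\pi\tau_0)=n-2$ is obtained by running the cycle-count computation of step (b) in reverse, using that $\pi\in\NC_2(\widetilde{1}_{n,u,v})$ gives $\#(\pi^{-1}\widetilde{1}_{n,u,v})=n$ (Definition \ref{def4}, noting $\widetilde{1}_{n,u,v}$ has two cycles so $n+2=\#(\pi)+\#(\pi^{-1}\widetilde{1}_{n,u,v})+2$) and then peeling off the two transpositions, each \emph{merging} two cycles in the reverse direction. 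The only subtlety I anticipate here is confirming that the $(u,v)$ recovered from $\pi$ on the $b_2(n)$ side — i.e.\ that $u$ really is \emph{minimal} with $(\pi\tau_0)(u)\in[n]$ — is forced by the condition "$\pi(a)\in[n]$ for all $a\in[u-1]$''; one needs $\pi(a)\in[n]\Rightarrow(\pi\tau_0)(a)=\pi(-a)$, and by $\delta$-symmetry $\pi(-a)=-\pi(a)\notin[n]$, so indeed no $a\in[u-1]$ produces a positive image, matching the $b_2(n)$-side minimality; this closes the loop.

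I expect the main obstacle to be purely bookkeeping: pinning down the precise relationship between the transpositions $(-u,v-1)$, $(-v,u-1)$ appearing in $\widetilde{1}_{n,u,v}=\widetilde{1}_n(-u,v-1)(-v,u-1)$ and the cycle structure of $\tau_2\tau_1$, and showing that multiplication by \emph{each} of them splits exactly one cycle of the running product (equivalently, that the relevant pairs of indices are never already separated). This is the analogue of item (3) in the proof of Proposition \ref{prop6}, but now with \emph{two} transpositions on the doubled index set $\pm[n]$, and the "splitting vs.\ merging'' has to be controlled in the right order; the $\delta$-symmetry of $\tau_1\tau_0$ is what guarantees the two splittings are independent (they occur in the two $\delta$-conjugate halves). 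Once that lemma-like computation is isolated and checked against the explicit two-line notation for $\widetilde{1}_{n,u,v}$ given in Definition \ref{def9}, both the well-definedness and the surjectivity counts fall out symmetrically, and the proof is complete.

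\begin{proof}
We only indicate the points of difference from the proof of Proposition \ref{prop5}; the full argument is a routine elaboration along the lines sketched above and is left to the reader.
\end{proof}
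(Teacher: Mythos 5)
Your outline reproduces the easy parts of the paper's argument correctly (the $\delta$-symmetry checks carried over verbatim from Proposition \ref{prop5}, the identification of $(u,v)$ via minimality, the two cycle-splittings turning $\#(\tau_2\tau_1)=n-2$ into $\#(\varphi_2(\tau_1)^{-1}\widetilde{1}_{n,u,v})=n$, and the surjectivity computations run in reverse), but it contains a genuine gap precisely at the condition $\varphi_2(\tau_1)\vee\widetilde{1}_{n,u,v}=1_{\pm[n]}$. You dismiss this with the claim that ``the cycle $(u,-v)$ meets both cycles of $\widetilde{1}_{n,u,v}$,'' and that claim is false: reading off the explicit cycle form $\widetilde{1}_{n,u,v}=(u,\ldots,v-1,1-u,\ldots,-1,-n,\ldots,-v)(v,\ldots,n,1,\ldots,u-1,1-v,\ldots,-u)$, both $u$ and $-v$ lie in the \emph{first} cycle (and both $-u$ and $v$ lie in the second). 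The pair $(u,-v)$ is a through string for $\widetilde{1}_n$ but not for $\widetilde{1}_{n,u,v}$, so connectedness must be supplied by some \emph{other} cycle of $\varphi_2(\tau_1)$, and nothing in your sketch produces one. This is not bookkeeping; it is the core difficulty of the proposition, and your stated expectation that the only obstacle is controlling the two transposition splittings misses it entirely.

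The paper's proof spends most of its length exactly here (item (7) of its well-definedness check): one first notes that $\#(\varphi_2(\tau_1)^{-1}\widetilde{1}_n)=\#(\tau_2\tau_1)=n-2$ forces $\varphi_2(\tau_1)$ to be crossing with respect to $\widetilde{1}_n$; then, assuming for contradiction that no cycle crosses $(u,-v)$ or $(-u,v)$, one restricts to the two symmetric blocks $A,B$ of $\widetilde{1}_{n,u,v}\vert_{\pm[n]\setminus\{u,v,-u,-v\}}$, computes $\#(\sigma_A\sigma_B\pi_B^{-1}\pi_A^{-1})=n+2$, hence $\#(\pi_A^{-1}\sigma_A)=n/2+1$, and derives that $\pi_A$ must be crossing with respect to $\sigma_A$; repurposing the splitting identity \eqref{equ:torusid} then yields a cycle count that violates the general annular inequality $\#(\pi_A)+\#(\pi_A^{-1}\widehat{\sigma}_A)+\#(\widehat{\sigma}_A)\le|A|+2\#(\pi_A\vee\widehat{\sigma}_A)$ of \cite[Eq.~(2.10)]{mingo2004annular}. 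The contradiction shows a crossing with $(u,-v)$ or $(-u,v)$ must exist, and that crossing cycle is the through string giving $\varphi_2(\tau_1)\vee\widetilde{1}_{n,u,v}=1_{\pm[n]}$. Without an argument of this kind (or some substitute), your proof of well-definedness does not go through; the rest of your plan (injectivity from $\tau_0$ being an involution, and the surjectivity checks) matches the paper and is fine.
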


\begin{proof}
We first show that the image of $\varphi_2$ indeed lies within $\NC_2^{\rm K}(n)$. Thus, we check the relevant conditions of definitions \ref{def4}, \ref{def5}, and \ref{def9}:
\begin{enumerate}
\item $\tau_1\tau_0$ is a pairing because, by Definition \ref{def7}, it is fixed-point free and $\tau_0\tau_1=\tau_1\tau_0$ implies that $(\tau_1\tau_0)^2$ is the identity, so $\varphi_2(\tau_1)=\tau_1\tau_0$ is an involution.
\item There does not exist $r\in[n]$ such that $(-r,r)$ is a cycle of $\varphi_2(\tau_1)=\tau_1\tau_0$ because if there were such a cycle, $\tau_1$ would have $r$ and $-r$ as fixed points, which would be in contradiction with it being a pairing.
\item For every $r,s\in\pm[n]$ such that $(r,s)$ is a cycle of $\varphi_2(\tau_1)=\tau_1\tau_0$, we must have that
\begin{equation*}
\tau_1(-r)=\tau_1\tau_0(r)=s\,\textrm{ and }\tau_1(-s)=\tau_1\tau_0(s)=r,
\end{equation*}
so by Definition \ref{def7},
\begin{equation*}
\tau_1\tau_0(-r)=\tau_0\tau_1(-r)=-s\,\textrm{ and }\tau_1\tau_0(-s)=\tau_0\tau_1(-s)=-r.
\end{equation*}
Hence, $(-s,-r)$ is also a cycle of $\varphi_2(\tau_1)=\tau_1\tau_0$.
\item By the definition of $b_2(n)$, there exists a minimal $u\in[n]$ such that $\tau_1(u)\in[n]$, which we denote $v$. Then, by equations \eqref{tau0}, \eqref{tau2}, $\widetilde{1}_n=\tau_2\tau_0$, hence
\begin{equation*}
\widetilde{1}_{n,u,v}=(u,1-v)(v,1-u)\tau_2\tau_0
\end{equation*}
and we have that
\begin{align}
\#(\varphi_2(\tau_1)^{-1}\widetilde{1}_{n,u,v})&=\#(\tau_0\tau_1(u,1-v)(v,1-u)\tau_2\tau_0) \nonumber
\\&=\#(\tau_1(u,1-v)(v,1-u)\tau_2) \nonumber
\\&=\#((u,1-v)(v,1-u)\tau_2\tau_1), \label{eq25}
\end{align}
where we have used the fact that the conjugation of a permutation has the same number of cycles as the original permutation and that $\tau_0,\tau_1$ are involutions. Next, observe that $\tau_2\tau_1(v)=\tau_2(u)=1-u$, so $\tau_2\tau_1$ contains a cycle, say $\sigma$, such that $v,1-u\in\sigma$, so $(v,1-u)\sigma$ has two cycles and
\begin{equation*}
\#((v,1-u)\tau_2\tau_1)=\#(\tau_2\tau_1)+1.
\end{equation*}
In a similar fashion, we see that
\begin{align*}
(v,1-u)\tau_2\tau_1(u)&=(v,1-u)\tau_2(v)
\\&=1-v
\\ \implies \#((u,1-v)(v,1-u)\tau_2\tau_1)&=\#((v,1-u)\tau_2\tau_1)+1
\\&=\#(\tau_2\tau_1)+2.
\end{align*}
Returning to equation \eqref{eq25}, we see that 
\begin{equation*}\#((\varphi_2(\tau_1)^{-1}\widetilde{1}_{n,u,v}))=\#(\tau_2\tau_1)+2=n-2+2=n.
\end{equation*}
As $\varphi_2(\tau_1)=\tau_1\tau_0$ is a pairing on $\pm[n]$, we thus have
\begin{equation*}
\#(\varphi_2(\tau_1))+\#(\varphi_2(\tau_1)^{-1}\widetilde{1}_{n,u,v})+\#(\widetilde{1}_{n,u,v})=2n+2.
\end{equation*}
\item As $\tau_1\tau_0=\tau_0\tau_1$ and this permutation is an involution, we have that
\begin{equation*}
(\varphi_2(\tau_1))(u)=\tau_1\tau_0(u)=\tau_0\tau_1(u)=\tau_0(v)=-v,
\end{equation*}
so $(u,-v)\in \varphi_2(\tau_1)=\tau_1\tau_0$.

\item By the definition of $u$, $\tau_1(u')<0$ for all $u'\in [u-1]$. Hence, for any such $u'$,
\begin{equation*}
(\varphi_2(\tau_1))(u')=\tau_1\tau_0(u')=\tau_0\tau_1(u')>0.
\end{equation*}

\item By Definition \ref{def4}, $\varphi_2(\tau_1)$ is crossing with respect to $\widetilde{1}_n$ because, with $u,v$ as above, $(u,-v)\in\varphi_2(\tau_1)\implies\varphi_2(\tau_1)\vee\widetilde{1}_n=1_{\pm[n]}$ and
\begin{align}
&\#(\varphi_2(\tau_1)^{-1}\widetilde{1}_n)=\#(\tau_2\tau_0\tau_0\tau_1)=\#(\tau_2\tau_1)=n-2 \label{equ:7first}
\\\implies&\#(\varphi_2(\tau_1))+\#(\varphi_2(\tau_1)^{-1}\widetilde{1}_n)+\#(\widetilde{1}_n)=2n\ne2n+2. \nonumber
\end{align}
Thus, defining
\begin{align*}
A&:=[n]\setminus\{u,\ldots,v\}\cup\{-v-1,\ldots,1-u\},
\\B&:=-[n]\setminus\{-v,\ldots,-u\}\cup\{u+1,\ldots,v-1\}
\end{align*}
to be the blocks of $\widetilde{1}_{n,u,v}\vert_{\pm[n]\setminus\{u,v,-u,-v\}}$, if there exists some $u'\in A$ such that $(u',\tau_1\tau_0(u'))$ crosses $(u,-v)$ or $(-u,v)$ with respect to $\widetilde{1}_n$, it must be that $\tau_1\tau_0(u')\in B$, so $\varphi_2(\tau_1)\vee\widetilde{1}_{n,u,v}=1_{\pm[n]}$. This conclusion is our goal, so we henceforth assume for the sake of contradiction that $\varphi_2(\tau_1)$ is crossing with respect to $\widetilde{1}_n$ but $(u,-v)$ and $(-u,v)$ do not cross with any cycles. Now, define
\begin{align*}
\pi_A&:=\varphi_2(\tau_1)\vert_{A},
\\\pi_B&:=\varphi_2(\tau_1)\vert_{B},
\\\sigma_A&:=(v+1,\ldots,n,1,\ldots,u-1,1-v,\ldots,-u-1),
\\\sigma_B&:=(u+1,\ldots,v-1,1-u,\ldots,-1,-n,\ldots,1-v)
\end{align*}
so that $\varphi_2(\tau_1)=\pi_A\pi_B(u,-v)(v,-u)$ and one may check that
\begin{equation} \label{equ:4fixed}
(u,1-v)(v,1-u)(-u,v+1)(-v,u+1)\widetilde{1}_n=\sigma_A\sigma_B(u,-v)(v,-u),
\end{equation}
where we again write $(v,0)\equiv(v,-n)$. Then, by equation \eqref{equ:7first}, we observe that
\begin{align}
n-2&=\#(\varphi_2(\tau_1)^{-1}\widetilde{1}_n) \nonumber
\\&=\#((u,-v)(v,-u)\pi_B^{-1}\pi_A^{-1}\widetilde{1}_n) \nonumber
\\&=\#((-v,u+1)(-u,v+1)(v,1-u)(u,1-v)\sigma_A\sigma_B\pi_B^{-1}\pi_A^{-1}), \label{equ:sigmaAB}
\end{align}
where the last line follows from substituting in equation \eqref{equ:4fixed} and using the fact that the number of cycles of a permutation conjugated by $(u,-v)(v,-u)$ is the same as the number of cycles of the original permutation. As $u,v,-u,-v$ are fixed points of $\sigma_A\sigma_B\pi_B^{-1}\pi_A^{-1}$, it has cycles $(u),(v),(-u),(-v)$ and we must have
\begin{equation*}
\#((-v,u+1)(-u,v+1)(v,1-u)(u,1-v)\sigma_A\sigma_B\pi_B^{-1}\pi_A^{-1})=\#(\sigma_A\sigma_B\pi_B^{-1}\pi_A^{-1})-4,
\end{equation*}
so we see from equation \eqref{equ:sigmaAB} that
\begin{equation*}
\#(\sigma_A\sigma_B\pi_B^{-1}\pi_A^{-1})=n+2.
\end{equation*}
Since $A,B$ are symmetric and disjoint and we have the symmetries
\begin{align*}
\pi_A&=\tau_0\pi_B\tau_0,
\\ \sigma_A&=\tau_0\sigma_B^{-1}\tau_0,
\end{align*}
we thus have that $\#(\pi_A^{-1}\sigma_A)=n/2+1$. Now, our hypothesis is that $\varphi_2(\tau_1)$ is crossing with respect to $\widetilde{1}_n$ but $(u,-v)$ and $(-u,v)$ do not cross with any cycles, so we must conclude that $\pi_A$ (and by symmetry, $\pi_B$) is crossing with respect to $\sigma_A$ ($\sigma_B$). Letting $(a,b)$ be a cycle of $\pi_A$ crossing with respect to $\sigma_A$, we may repurpose equation \eqref{equ:torusid} to obtain
\begin{equation*}
\#(\pi_A^{-1}\widehat{\sigma}_A)=\#(\pi^{-1}\sigma_A)+1=n/2+2,\quad \widehat{\sigma}_A:=\sigma_A(a-1,b).
\end{equation*}
Since $\pi_A\vee\widehat{\sigma}_A=1_A$, this is in contradiction with the inequality \cite[Eq.~(2.10)]{mingo2004annular}
\begin{equation*}
\#(\pi_A)+\#(\pi_A^{-1}\widehat{\sigma}_A)+\#(\widehat{\sigma}_A)\le |A|+2\#(\pi_A\vee\widehat{\sigma}_A);
\end{equation*}
the left-hand side simplifies as $|A|/2+(n/2+2)+2=n+3$, while the right-hand side reads $|A|+2=n$. Therefore, we finally conclude that $(u,-v)$ or $(-u,v)$ must be crossing with respect to $\widetilde{1}_n$ and $\varphi_2(\tau_1)\vee\widetilde{1}_{n,u,v}=1_{\pm[n]}$. The above inequality is a well known generalisation of the second condition of Definition \ref{def4}, where in the parlance of Remark \ref{rmk4}, the term $\#(\pi_A\vee\widehat{\sigma}_A)$ counts the number of connected components of the hypermap at hand. It can be proven via induction, with the cycles of $\pi_A$ being interpreted as non-crossing edges on the annulus corresponding to $\widehat{\sigma}_A$ --- the induction step simplifies the cell decomposition of said annulus by removing one edge, hence face, leaving the Euler characteristic unchanged.
\end{enumerate}

Conditions (4) and (7) ensure that $\tau_1\tau_0$ is non-crossing with respect to $\widetilde{1}_{n,u,v}$, so is an element of $\NC(\widetilde{1}_{n,u,v})$. Then, conditions (2) and (3) place $\tau_1\tau_0$ in $\NC^\delta(\widetilde{1}_{n,u,v})$, while condition (1) refines this to $\varphi_1(\tau_1)=\tau_1\tau_0\in\NC_2^\delta(\widetilde{1}_{n,u,v})$. Finally, conditions (5) and (6) imply that $\varphi_1(\tau_1)=\tau_1\tau_0\in \NC_{2,u,v}^{\rm K}(n) \subset \NC_2^{\rm K}(n)$, as desired.

Since $\tau_0$ is an involution, $\varphi_2$ is injective, so it remains to prove that it is also surjective. Thus, let $\pi\in\NC_2^{\rm K}(n)$ and check that $\varphi_2^{-1}(\pi)=\pi\tau_0$ satisfies the conditions of $b_2(n)$ given in Definition \ref{def7}:
\begin{enumerate}
\item By exactly the same arguments as in the proof of Proposition \ref{prop5}, $\varphi^{-1}(\pi)\in\cP_2(\pm n)$, $\tau_0\varphi_2^{-1}(\pi)=\varphi_2^{-1}(\pi)\tau_0$, and $\tau_0\varphi_2^{-1}(\pi)$ is fixed-point free.
\item By the definition of $\NC_2^{\rm K}(n)$, there exist $1\leq u<v<n$ such that $\pi\in\NC_{2,u,v}^{\rm K}(n)\subset\NC_2^\delta(\widetilde{1}_{n,u,v})$. Thus, by definitions \ref{def4} and \ref{def5}, we have that
\begin{equation*}
\#(\pi)+\#(\pi^{-1}\widetilde{1}_{n,u,v})=2n.
\end{equation*}
Recalling that $\pi$ is a pairing and $\widetilde{1}_{n,u,v}=(u,1-v)(v,1-u)\tau_2\tau_0$ then shows that
\begin{equation}\label{equ:1}
\#((u,1-v)(v,1-u)\tau_2\tau_0\pi)=\#(\pi^{-1}(u,1-v)(v,1-u)\tau_2\tau_0)=n.
\end{equation}
As $\pi(v)=-u$,
\begin{align*}
\tau_2\tau_0\pi(v)&=\tau_2\tau_0(-u)=\tau_2(u)=1-u,
\end{align*}
so $v$ and $1-u$ are in the same cycle of $\tau_2\tau_0\pi$, hence
\begin{equation}\label{equ:2}
\#((v,1-u)\tau_2\tau_0\pi)=\#(\tau_2\tau_0\pi)+1.
\end{equation}
Similarly,
\begin{align*}
(v,1-u)\tau_2\tau_0\pi(u)&=(v,1-u)\tau_2\tau_0(-v)=(v,1-u)\tau_2(v)=1-v,
\end{align*}
so $u$ and $1-v$ are in the same cycle of $(v,1-u)\tau_2\tau_0\pi$ and we have
\begin{equation*}
\#((u,1-v)(v,1-u)\tau_2\tau_0\pi)=\#((v,1-u)\tau_2\tau_0\pi)+1= \#(\tau_2\tau_0\pi)+2,
\end{equation*}
where the last equality follows from equation \eqref{equ:2}. Therefore, substituting this into equation \eqref{equ:1} yields
\begin{equation*}
\#(\tau_2\tau_0\pi)=\#((u,1-v)(v,1-u)\tau_2\tau_0\pi)-2=n-2.
\end{equation*}
Finally, we observe that $\varphi_2(\tau_1)=\tau_1\tau_0=\tau_0\tau_1\implies\varphi_2^{-1}(\pi)=\pi\tau_0=\tau_0\pi$, thus
\begin{equation*}
\#(\tau_2\varphi_2^{-1}(\pi))=\#(\tau_2\tau_0\pi)=n-2.
\end{equation*}

\item By the definition of $\NC_{2,u,v}^{\rm K}(n)$, our choice of $u,v\in[n]$ above is such that $(u,-v)\in\pi$, so
\begin{equation*}
\varphi_2^{-1}(\pi)(v)=\pi\tau_0(v)=\pi(-v)=u.
\end{equation*}
Hence, $\varphi_2^{-1}(\pi)$ contains the cycle $(u,v)\in\cP_2(n)$.
\end{enumerate}
Having shown that $\varphi_2$ is injective and surjective, we are done.
\end{proof}

\section{Bipartite Pairings, Permutations, and Moments of the Laguerre Ensembles} \label{s3}
The theory of Section \ref{s2} transfers readily to the setting of the equivalent Laguerre ensembles. Indeed, by propositions \ref{prop3} and \ref{prop4}, the moments of the LUE and LOE (with $M=cN$, recall) are respectively given by the polynomial expansions
\begin{align*}
m_n^{(\mathrm{LUE})}&=\left(\sum_{p=1}^nc^p|\widetilde{a}_{0,p}(n)|\right)N^{n+1}+\left(\sum_{p=1}^nc^p|\widetilde{a}_{1,p}(n)|\right)N^{n-1}+\mathrm{O}(N^{n-3}),
\\ m_n^{(\mathrm{LOE})}&=\left(\frac{1}{2^n}\sum_{p=1}^nc^p|\widetilde{a}_{0,p}(n)|\right)N^{n+1}+\left(\frac{1}{2^n}\sum_{p=1}^nc^p|\widetilde{b}_{1,p}(n)|\right)N^n
\\&\quad+\left(\frac{1}{2^n}\sum_{p=1}^nc^p\left(|\widetilde{a}_{1,p}(n)|+|\widetilde{b}_{2,p}(n)|\right)\right)N^{n-1}+\mathrm{O}(N^{n-2}),
\end{align*}
where the coefficients enumerate the same ribbon graphs as in the Gaussian case subject to the extra condition that the ribbon graphs be bipartite. Thus, we give a combinatorial definition of bipartite ribbon graphs in analogy with Definition \ref{def7}.

\begin{definition}[Bipartite ribbon graphs] \label{def10}
Let $n\in\N$. The set of genus $g$ orientable, bipartite ribbon graphs with $2n$ half-edges and $p$ white boundaries introduced in Proposition \ref{prop3} is
\begin{equation*}
\widetilde{a}_{g,p}(n)=\left\{\pi\in a_g(2n)\,\middle\vert\,\begin{array}{l}\textrm{for all }a\in2[n],\,\pi(a)\in2[n]-1\\\textrm{and }\#\left((\pi^{-1}1_{2n})\vert_{2[n]-1}\right)=p\end{array}\right\},
\end{equation*}
where we write $2[n]:=\{2,4,\ldots,2n\}$ and $2[n]-1:=\{1,3,\ldots,2n-1\}$.

The set of non-orientable, bipartite ribbon graphs with $n$ half-edges of Euler genus $k$ with $p$ white boundaries introduced in Proposition \ref{prop4} is
\begin{equation*}
\widetilde{b}_{k,p}(n)=\left\{\tau_1\in b_k(2n)\,\middle\vert\,\begin{array}{l}\textrm{for all }a\in B(n),\tau_1(a)\in B(n)\\\textrm{and }\#\left((\tau_2\tau_1)\vert_{W(n)}\right)=2p\end{array}\right\},
\end{equation*}
where we define
\begin{align*}
B(n)&:=2[n]-1\cup-2[n]=\{1,3,\ldots,2n-1,-2n,2-2n,\ldots,-2\},
\\W(n)&:=2[n]\cup1-2[n]=\{2,4,\ldots,2n,1-2n,3-2n,\ldots,-1\}
\end{align*}
to be the sets of black and white quarter-edges, respectively.
\end{definition}

Equivalent to the theory of bipartite ribbon graphs is that of combinatorial hypermaps or bipartite maps. Topologically, the idea is to merge together each quarter-edge $u\in1-2[n]=\{1-2n,\ldots,-3,-1\}$ with $\tau_2(u)$ so that the area bounded by the white boundaries of these quarter-edges shrink to white vertices. In the orientable case, one may further identify $u$ with $\tau_0(u)$ since there is no need to track orientation. Combinatorially, we have the following equivalence.

\begin{definition}[Combinatorial hypermaps] \label{def11}
Let $n,g,k\in\N$ and define the following sets of combinatorial hypermaps:
\begin{align*}
\widehat{a}_{g,p}(n)&=\left\{\pi\in\PS_n\,\middle\vert\,\#(\pi)=p\,\textrm{ and }\#(\pi^{-1}1_n)=n-p+1-2g\right\},
\\\widehat{b}_{k,p}(n)&=\left\{\tau_1\in\PS_{\pm n}\,\middle\vert\,\begin{array}{l}\#(\tau_1)=2p,\,\#(\widetilde{1}_n\tau_1)=2(n-p+1-k),\\\tau_0\tau_1\tau_0=\tau_1^{-1},\,\tau_0\tau_1\textrm{ is fixed-point free, and}\\\textrm{there exists }a\in[n]\textrm{ such that }\tau_1(a)\in-[n]\end{array}\right\}.
\end{align*}
\end{definition}

\begin{lemma}[Equivalence of bipartite ribbon graphs and combinatorial hypermaps] \label{lemma3}
For $n,g,k\in\N$, we have the bijections
\begin{align*}
\widetilde{a}_{g,p}(n)&\cong\widehat{a}_{g,p}(n),
\\\widetilde{b}_{k,p}(n)&\cong\widehat{b}_{k,p}(n).
\end{align*}
\end{lemma}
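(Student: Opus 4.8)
The plan is to realise both bijections as the classical contraction of a bipartite (ribbon) map to a hypermap: each white boundary of the ribbon graph is collapsed to a white vertex, and this is implemented by an explicit relabelling of the quarter‑edges. The orientable case is a clean relabelling, and the non‑orientable case is its $\pm$‑decorated analogue in the spirit of \S\ref{s2.3}.

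\textbf{Orientable case.} I would define $\Psi\colon\widetilde{a}_{g,p}(n)\to\PS_n$ by contracting along $1_{2n}$: since $\pi\in\widetilde{a}_{g,p}(n)\subseteq\cP_2(2n)$ sends $2[n]$ to $2[n]-1$ and is bipartite, $\pi^{-1}1_{2n}$ preserves the partition of $[2n]$ into even and odd labels, and I set $\pi^{\mathrm{new}}(k)=\ell$ exactly when $\pi(2k)=2\ell-1$. Membership in $\widehat{a}_{g,p}(n)$ is then read off from the two conjugacies $\pi^{\mathrm{new}}=\phi\,(\pi^{-1}1_{2n})\vert_{2[n]-1}\,\phi^{-1}$ and $(\pi^{\mathrm{new}})^{-1}1_n=\psi\,(\pi^{-1}1_{2n})\vert_{2[n]}\,\psi^{-1}$, where $\phi(2k-1)=k$ and $\psi(2k)=k$: the first gives $\#(\pi^{\mathrm{new}})=p$, and combined with $\#(\pi^{-1}1_{2n})=n+1-2g$ the second gives $\#((\pi^{\mathrm{new}})^{-1}1_n)=(n+1-2g)-p$, as required. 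The inverse sends $\widehat{\pi}\in\widehat{a}_{g,p}(n)$ to the bipartite pairing $\pi\in\cP_2(2n)$ determined by $\pi(2k)=2\widehat{\pi}(k)-1$; reversing the two cycle counts shows $\pi\in\widetilde{a}_{g,p}(n)$, and the two assignments are mutually inverse by construction.

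\textbf{Non-orientable case.} Here $\tau_1\in\widetilde{b}_{k,p}(n)\subseteq\cP_2(\pm 2n)$ preserves $W(n)$, and I would set $\tau_1^{\mathrm{new}}:=\Phi\,(\tau_2\tau_1)\vert_{W(n)}\,\Phi^{-1}\in\PS_{\pm n}$, where $\Phi\colon W(n)\to\pm[n]$ is $\Phi(2k)=k$, $\Phi(1-2k)=-k$ (topologically: merge $1-2k$ with $\tau_2(1-2k)=2k$, retaining the $\pm$‑labels to track local orientation). Then $\#(\tau_1^{\mathrm{new}})=\#((\tau_2\tau_1)\vert_{W(n)})=2p$ is immediate from Definition \ref{def10}. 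For the symmetry condition one checks that $\Phi$ intertwines the $\tau_0$ of $\pm[n]$ with the involution $\tau_2\vert_{W(n)}$, so that $\tau_0\tau_1^{\mathrm{new}}\tau_0=(\tau_1^{\mathrm{new}})^{-1}$, using $\tau_2(\tau_2\tau_1)\tau_2=(\tau_2\tau_1)^{-1}$. Fixed‑point‑freeness of $\tau_0\tau_1^{\mathrm{new}}$ reduces to $\tau_1$ having no fixed point; and the existence of $a\in[n]$ with $\tau_1^{\mathrm{new}}(a)\in-[n]$ follows because, by $\tau_0\tau_1=\tau_1\tau_0$, if $\tau_1$ mapped every even positive into $1-2[n]$ then it would map every element of $[2n]$ into $-[2n]$, contradicting the defining condition of $b_k(2n)$. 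For the face count one uses $\widetilde{1}_{2n}=\tau_2\tau_0$ to verify $\Phi^{-1}\widetilde{1}_n\Phi=\widetilde{1}_{2n}^{2}\vert_{W(n)}$, whence $\#(\widetilde{1}_n\tau_1^{\mathrm{new}})=\#((\widetilde{1}_{2n}^{2}\tau_2\tau_1)\vert_{W(n)})$, and this must be shown to equal $\#((\tau_2\tau_1)\vert_{B(n)})=\#(\tau_2\tau_1)-\#((\tau_2\tau_1)\vert_{W(n)})=(2n+2(1-k))-2p=2(n-p+1-k)$. The inverse map reverses the contraction, gluing each white vertex of the hypermap back into a boundary of the ribbon graph, and one checks it lands in $\widetilde{b}_{k,p}(n)$.

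\textbf{Main obstacle.} The delicate step is precisely this last identity $\#(\widetilde{1}_n\tau_1^{\mathrm{new}})=2(n-p+1-k)$ in the non‑orientable case --- equivalently, the statement (topologically clear from Lemma \ref{lemma1}) that contracting the $p$ mirror‑pairs of white boundaries leaves the Euler genus unchanged while turning the black boundaries into the $n-p+1-k$ faces of the hypermap. I expect to establish it either by comparing $(\widetilde{1}_{2n}^{2}\tau_2\tau_1)\vert_{W(n)}$ with $(\tau_2\tau_1)\vert_{B(n)}$ under the $W(n)\leftrightarrow B(n)$ symmetry furnished by $\tau_0$ and $\tau_2$, or by an induction that contracts one mirror‑pair of white boundaries at a time --- each such move removing one face of the associated closed surface and leaving $\chi$ fixed, in the spirit of the induction behind the inequality \cite[Eq.~(2.10)]{mingo2004annular} used in the proof of Proposition \ref{prop7}. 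Lining up the relabelling $\Phi$ with the target $\widetilde{1}_n$ (including the reversed cyclic order on the negative labels) is where the bookkeeping is most error‑prone; the rest is routine.
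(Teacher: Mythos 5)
Your construction is essentially the paper's: in the orientable case the paper uses exactly the relabelling $\pi'(u')=(\pi(2u')+1)/2$, and in the non-orientable case it likewise takes the hypermap permutation to be $\tau_2\tau_1$ (whose cycles are the white vertices/hyperedges) conjugated by a relabelling of the white quarter-edges that agrees with your $\Phi$ up to an overall sign. Your condition-by-condition verifications, including the face-count identity you single out as the main obstacle, go beyond the paper's proof, which merely exhibits these relabellings and does not check the cycle-count conditions of Definition \ref{def11}.
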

\begin{proof}
In the orientable case, we identify each half-edge $u\in2[n]-1$ of $\pi\in\widetilde{a}_{g,p}(n)$ with $u+1$. Then, the corresponding $\pi'\in\widehat{a}_{g,p}(n)$ is such that, for each $u'\in[n]$,
\begin{equation*}
\pi'(u')=\frac{\pi(2u')+1}{2}.
\end{equation*}
In the non-orientable case, we first identify each quarter-edge $u\in[2n]$ of $\tau_1\in\widetilde{b}_{k,p}(n)$ with $-u$, as our ribbon graphs being bipartite makes $\tau_0$ redundant. Then, similar to the orientable case, we glue each half-edge $u\in2[n]-1$ to $u+1$ along their white boundaries (but do not identify, as we still need to keep track of twists) so that these half-edges now serve as quarter-edges. We relabel every $u\in2[n]$ by $-u/2$ and every $u\in2[n]-1$ by $(u+1)/2$. Then, with $f$ denoting this relabelling, the $\tau_1'\in\widehat{b}_{k,p}(n)$ corresponding to $\tau_1$ is $f\circ(\tau_2\tau_1)\circ f^{-1}$, i.e., the cycles of $\tau_2\tau_1$ are the hyperedges (white vertices) of our hypermaps (bipartite maps).
\end{proof}

\begin{figure}
    \centering
    \includegraphics[width=0.16\textwidth]{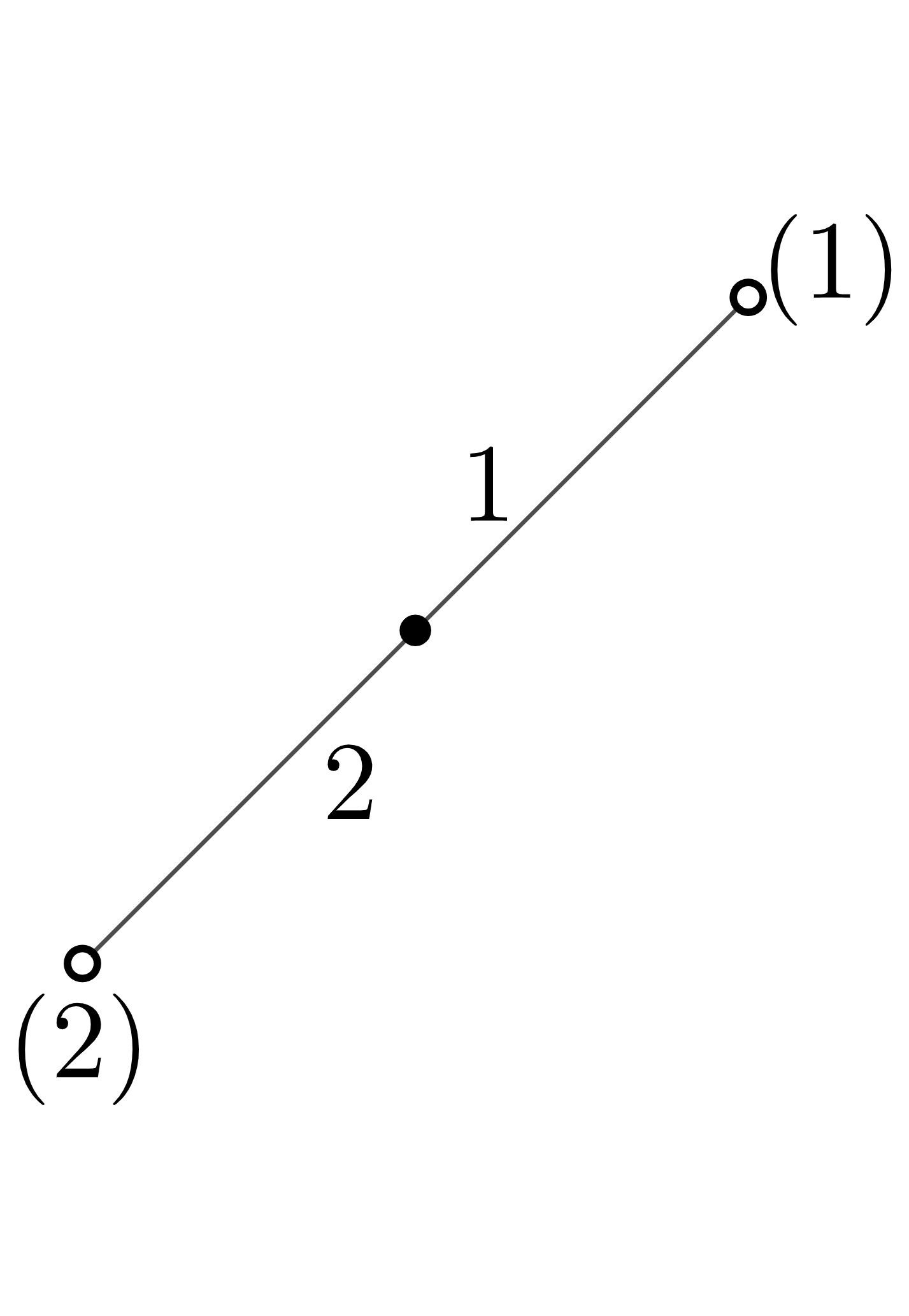}\qquad \includegraphics[width=0.14\textwidth]{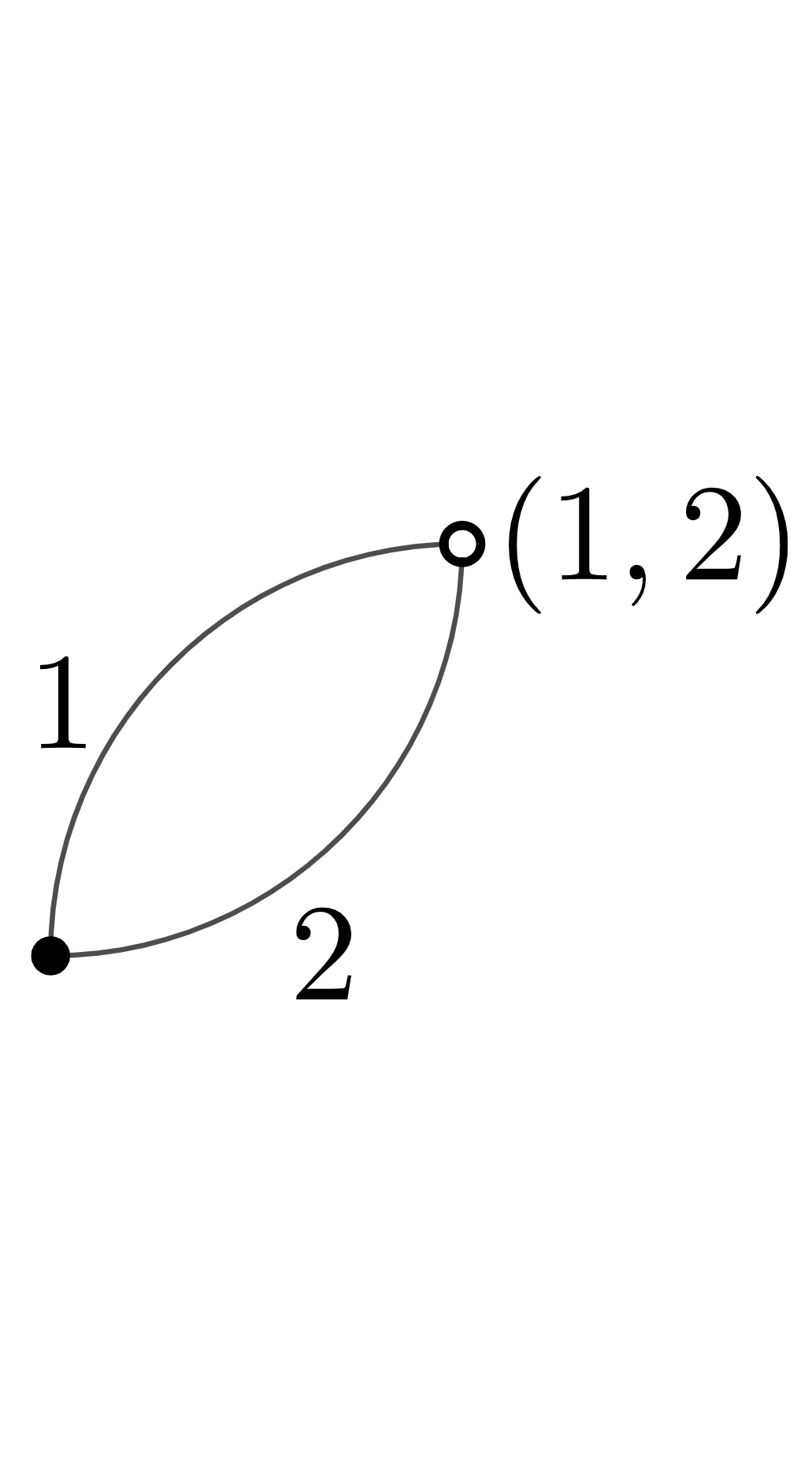}\qquad \includegraphics[width=0.42\textwidth]{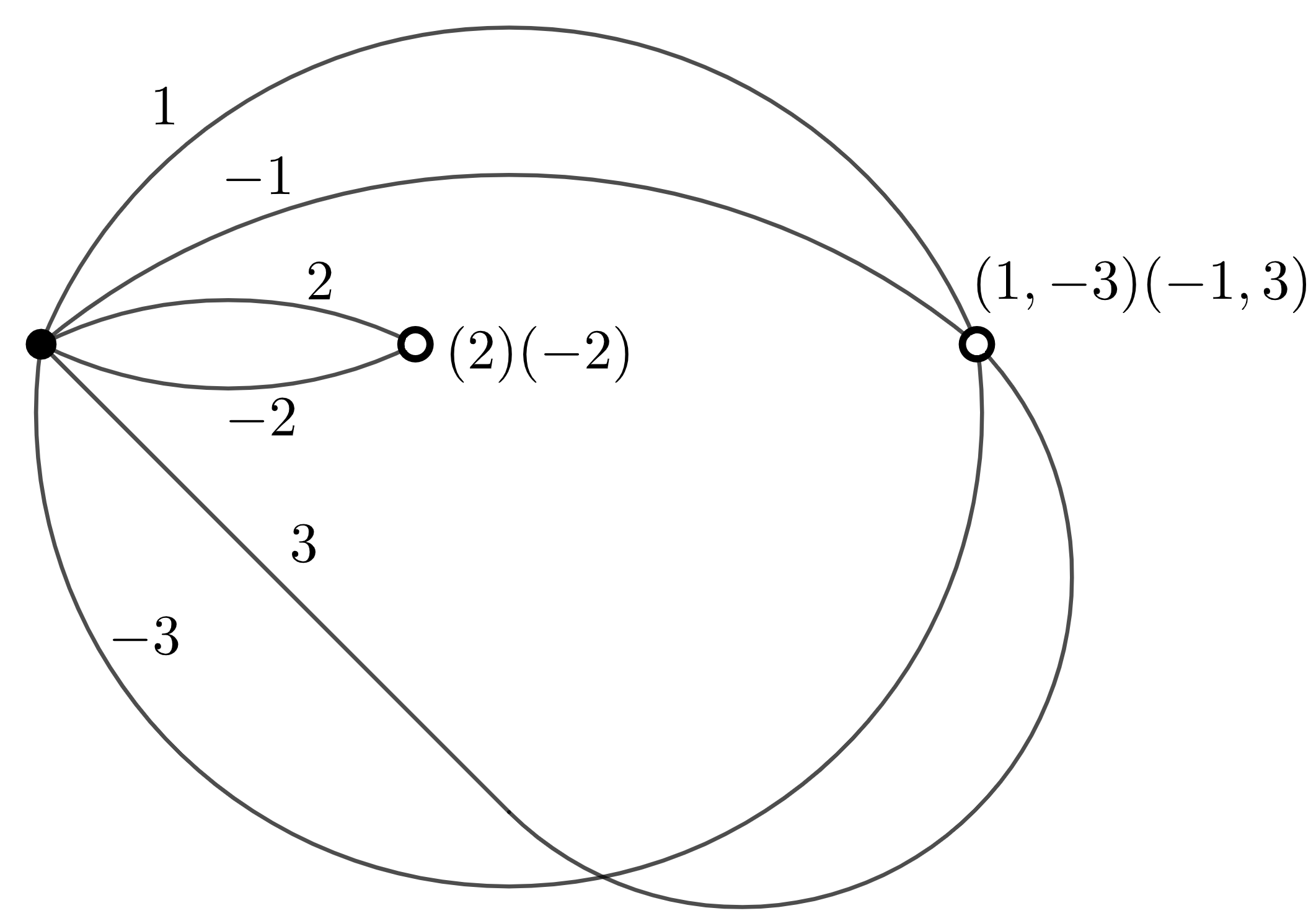}
    \caption{The left and middle orientable combinatorial hypermaps correspond to the ribbon graphs of Figure \ref{fig3}. The non-orientable combinatorial hypermap on the right corresponds to the ribbon graph $\tau_1=(1,5)(-1,-5)(2,6)(-2,-6)(3,-4)(-3,4)$ of Figure \ref{fig7a}, which is bipartite since elements of $B(3)$ are mapped to $B(3)$ by $\tau_1$.}
    \label{fig10}
\end{figure}

We now show how the results of Section \ref{s2} extend to the Laguerre ensembles.

\subsection{Laguerre Ensemble Moments in Terms of Bipartite Ribbon Graphs and Pairings}

We begin by defining the relevant bipartite annular pairings.
\begin{definition}[Bipartite non-crossing annular pairings]
Let $n\in\N$ be even, let $p\in[n/2]$, let $1\le u<v<n$, and recall the sets of definitions \ref{def5}, \ref{def8}, and \ref{def9}. Recall also the sets $B(n)$ and $W(n)$ of Definition \ref{def10}. Then, we define
\begin{align*}
\widetilde{\NC}_{2}^{\delta,p}(n,-n)&:=\left\{\pi\in\NC_2^\delta(n,-n)\,\middle\vert\,\begin{array}{l}\textrm{for all }a\in W(n/2),\,\pi(a)\in B(n/2)\\\textrm{and }\#\left((\pi^{-1}\widetilde{1}_n)\vert_{B(n/2)}\right)=2p\end{array}\right\},
\\\widetilde{\NC}_{2,u,v}^{{\rm T},p}(n)&:=\left\{\pi\in\NC_{2,u,v}^{{\rm T},p}(n)\,\middle\vert\,\begin{array}{l}\textrm{for all }a\in[n]\textrm{ even, }\pi(a)\textrm{ is odd}\\\textrm{and }\#\left((\pi^{-1}1_{n})\vert_{2[n/2]-1}\right)=p\end{array}\right\},
\\\widetilde{\NC}_2^{{\rm T},p}(n)&:=\bigcup_{\substack{1\le u<v<n,\\v-u\textrm{ odd}}}\widetilde{\NC}_{2,u,v}^{{\rm T},p}(n),
\\\widetilde{\NC}_{2,u,v}^{{\rm K},p}(n)&:=\left\{\pi\in\NC_{2,u,v}^{{\rm K},p}(n)\,\middle\vert\,\begin{array}{l}\textrm{for all }a\in W(n/2),\,\pi(a)\in B(n/2)\\\textrm{and }\#\left((\pi^{-1}\widetilde{1}_n)\vert_{B(n/2)}\right)=2p\end{array}\right\},
\\\widetilde{\NC}_2^{{\rm K},p}(n)&:=\bigcup_{\substack{1\le u<v<n,\\v-u\textrm{ even}}}\widetilde{\NC}_{2,u,v}^{{\rm K},p}(n),
\end{align*}
\end{definition}

With these definitions in hand, it is relatively straightforward to define bijections between them and the bipartite ribbon graphs of Definition \ref{def10}.
\begin{proposition}[Bijections for $\widetilde{a}_{1,p}(n)$, $\widetilde{b}_{1,p}(n)$, and $\widetilde{b}_{2,p}(n)$]
Let $n\in\N$. We have that $\widetilde{a}_{1,p}(n)=\widetilde{\NC}_2^{{\rm T},p}(2n)$ and that the following maps are bijections:
\begin{align*}
\widetilde{\varphi}_1:\widetilde{b}_{1,p}(n)&\to\widetilde{\NC}_2^{\delta,p}(2n,-2n),
\\\tau_1&\mapsto\tau_1\tau_0,
\\\widetilde{\varphi}_2:\widetilde{b}_{2,p}(n)&\to\widetilde{\NC}_2^{{\rm K},p}(2n),
\\\tau_1&\mapsto\tau_1\tau_0.
\end{align*}
\end{proposition}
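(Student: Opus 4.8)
The plan is to deduce all three claims from their non-bipartite counterparts, propositions~\ref{prop5}, \ref{prop6}, and~\ref{prop7}, applied with $2n$ in place of $n$ (which is automatically even). By Definition~\ref{def10}, the bipartite sets $\widetilde{a}_{1,p}(n)$ and $\widetilde{b}_{k,p}(n)$ are carved out of $a_1(2n)$ and $b_k(2n)$ by two extra requirements: a \emph{colour condition} saying that the relevant pairing respects the black/white bipartition of the (quarter-)edges, and a \emph{white-boundary count} equal to $p$ (equivalently $2p$ once one passes to quarter-edges). Likewise, the bipartite annular pairings are carved out of $\NC_2^{\rm T}(2n)$, $\NC_2^\delta(2n,-2n)$, and $\NC_2^{\rm K}(2n)$ by the analogous colour and white-boundary conditions. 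Thus the whole task reduces to checking that, along the bijections of propositions~\ref{prop5}--\ref{prop7}, these supplementary conditions on the ribbon-graph side correspond exactly to the supplementary conditions on the annular-pairing side; everything else is a verbatim transcription of the earlier proofs.

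For the orientable statement $\widetilde{a}_{1,p}(n)=\widetilde{\NC}_2^{{\rm T},p}(2n)$, I would invoke Proposition~\ref{prop6} to get the equality $a_1(2n)=\NC_2^{\rm T}(2n)$ of subsets of $\cP_2(2n)$, and then observe that the colour condition ``$\pi$ sends even half-edges to odd ones'' and the count $\#\big((\pi^{-1}1_{2n})\vert_{2[n]-1}\big)=p$ appear literally on both sides. The one point needing a remark is the restriction of the union defining $\widetilde{\NC}_2^{{\rm T},p}(2n)$ to indices with $v-u$ odd: for $\pi\in a_1(2n)$ the proof of Proposition~\ref{prop6} picks $u$ minimal with $(u,\pi(u))$ crossing and $v=\pi(u)$, so the colour condition forces $v-u=\pi(u)-u$ to be odd automatically, and conversely any member of the restricted union already lies in $\NC_2^{\rm T}(2n)=a_1(2n)$ and carries the colour and count data by construction.

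For the two non-orientable statements, the maps $\widetilde{\varphi}_1$ and $\widetilde{\varphi}_2$ are restrictions of the bijections $\tau_1\mapsto\tau_1\tau_0$ of propositions~\ref{prop5} and~\ref{prop7}, so it suffices to show that each such bijection carries the bipartite subset onto the bipartite subset. Two elementary index computations on $\pm[2n]$ drive this: from $\tau_0=(1,-1)\cdots(2n,-2n)$ and $\tau_2=(-1,2)(-2,3)\cdots(-2n,1)$ one reads off directly that $\tau_0$ interchanges the black set $B(n)$ with the white set $W(n)$, while $\tau_2$ preserves each of them. Consequently the colour condition $\tau_1(B(n))=B(n)$ is equivalent to $(\tau_1\tau_0)(W(n))=B(n)$, which is exactly the colour condition defining $\widetilde{\NC}_2^{\delta,p}(2n,-2n)$ and $\widetilde{\NC}_2^{{\rm K},p}(2n)$. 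For the white-boundary count I would write $(\tau_1\tau_0)^{-1}\widetilde{1}_{2n}=\tau_0(\tau_1\tau_2)\tau_0$ and use that conjugation by $\tau_0$ carries the cycle structure of $\tau_1\tau_2$ on the invariant set $W(n)$ to that of $(\tau_1\tau_0)^{-1}\widetilde{1}_{2n}$ on $\tau_0(W(n))=B(n)$, together with $\tau_1\tau_2=(\tau_2\tau_1)^{-1}$ having the same number of cycles on $W(n)$ as $\tau_2\tau_1$; this gives $\#\big(((\tau_1\tau_0)^{-1}\widetilde{1}_{2n})\vert_{B(n)}\big)=\#\big((\tau_2\tau_1)\vert_{W(n)}\big)$, so the two white-boundary conditions match. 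As in the orientable case, the restriction of the Klein-bottle union to $v-u$ even is automatic, because the $u,v$ produced in the proof of Proposition~\ref{prop7} are positive quarter-edges of the same colour. Feeding these equivalences through the proofs of propositions~\ref{prop5} and~\ref{prop7} then yields that $\widetilde{\varphi}_1$ and $\widetilde{\varphi}_2$ are bijections onto the stated targets, together with their inverses $\pi\mapsto\pi\tau_0$.

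I expect the only genuinely substantive step to be this ``restricted cycle count'' bookkeeping for the white boundaries: one must check that conjugation by a permutation $g$ sends the cycle decomposition of $\sigma$ on a $\sigma$-invariant subset $S$ to that of $g\sigma g^{-1}$ on $g(S)$, and that passing to inverses leaves restricted cycle counts unchanged --- both facts being true precisely because an invariant set is a union of whole cycles. Everything else --- the identities $\tau_0(B(n))=W(n)$, $\tau_2(B(n))=B(n)$, $\widetilde{1}_{2n}=\tau_2\tau_0$, the parity remarks reconciling the restricted unions, and the transcription of propositions~\ref{prop5}, \ref{prop6}, \ref{prop7} with $n$ replaced by $2n$ --- is routine.
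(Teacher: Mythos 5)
Your proposal is correct and takes essentially the same route as the paper: reduce to propositions \ref{prop5}--\ref{prop7} applied with $2n$ half-edges and verify that the extra bipartite constraints (colour condition and white-boundary count) correspond under $\tau_1\mapsto\tau_1\tau_0$. The paper's proof asserts this matching in a single sentence, while you spell out the supporting bookkeeping --- $\tau_0$ exchanging $B(n)$ and $W(n)$, $\tau_2$ preserving them, conjugation and inversion preserving restricted cycle counts, and the parity of $v-u$ --- all of which is consistent with what the paper leaves implicit.
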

\begin{proof}
The sets at hand are subsets of the sets considered in Section \ref{s2} with additional constraints stemming from the bipartite nature of the ribbon graphs and non-crossing annular pairings pertaining to the Laguerre ensembles. In Section \ref{s2}, we already proved the bijections
\begin{align*}
b_1(2n)&\cong\NC_2^\delta(2n,-2n),
\\a_1(2n)&\cong\NC_2^{\rm T}(n),
\\b_2(2n)&\cong\NC_2^{\rm K}(n).
\end{align*}
Thus, it remains only to check the additional constraints in the definitions of the sets that we claim are in bijection. However, the additional constraints on $\widetilde{a}_{1,p}(n)$, $\widetilde{b}_{1,p}(n)$, and $\widetilde{b}_{2,p}(n)$ are respectively the same as for $\widetilde{\NC}_2^{{\rm T},p}(2n)$, $\widetilde{\NC}_2^{\delta,p}(2n,-2n)$, and $\widetilde{\NC}_2^{{\rm K},p}(2n)$, other than the fact that in the non-orientable cases, the role of $\tau_0$ is to pair every white label with a black one.
\end{proof}
\begin{remark}
One may surmise from \cite{mingob2025} that $|\widetilde{b}_{1,p}(n)|=|\widetilde{\NC}_2^{\delta,p}(2n,-2n)|$.
\end{remark}

\begin{figure}[H]
    \centering
    \begin{subfigure}{0.4\textwidth}
    \centering
    \captionsetup{justification=centering}
    \includegraphics[width=0.7\textwidth]{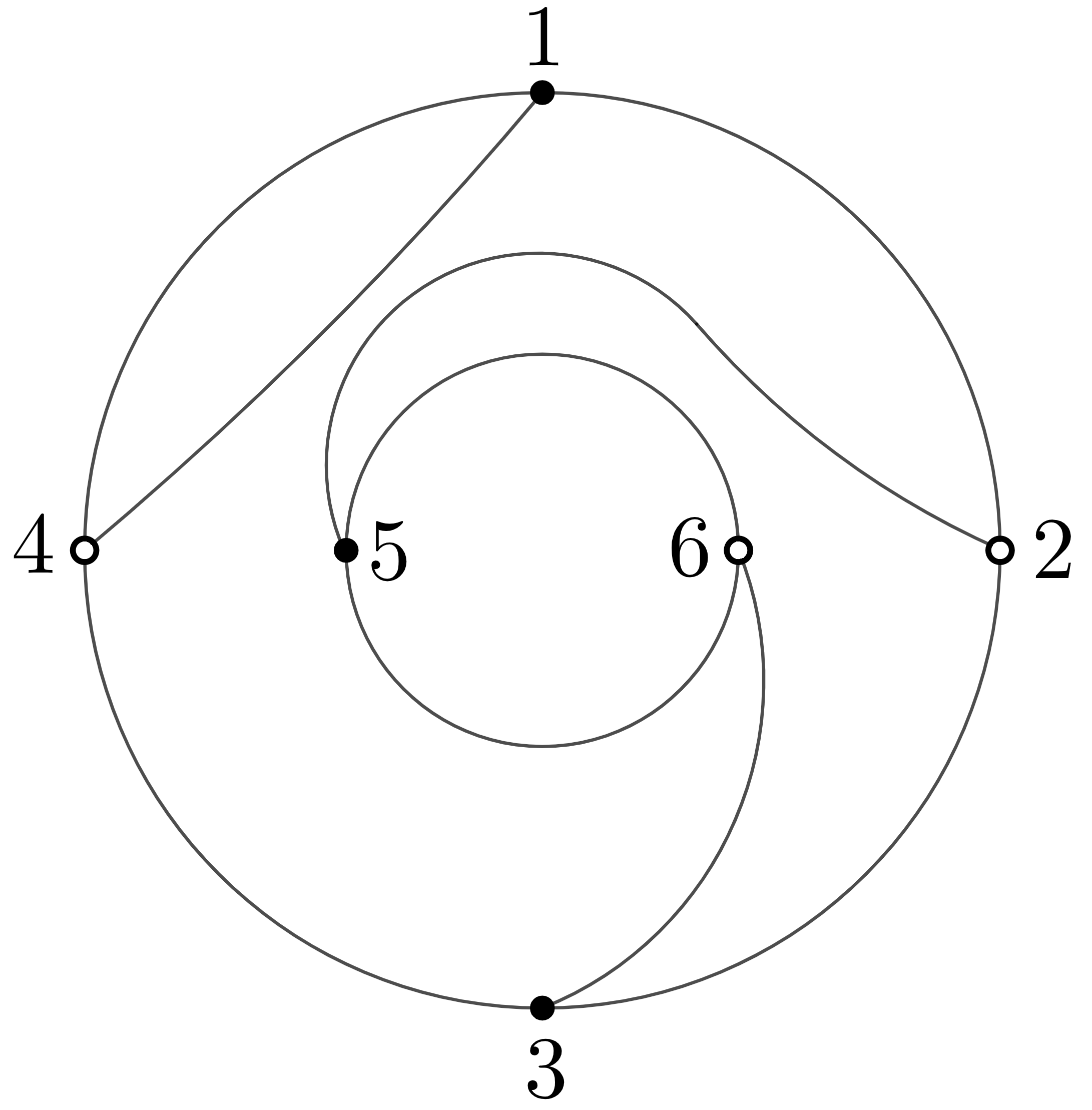}
    \caption{Bipartite, toroidal, non-crossing annular pairing.} \label{fig11a}
    \end{subfigure}\qquad\qquad
    \begin{subfigure}{0.4\textwidth}
    \centering
    \captionsetup{justification=centering}
    \includegraphics[width=0.7\textwidth]{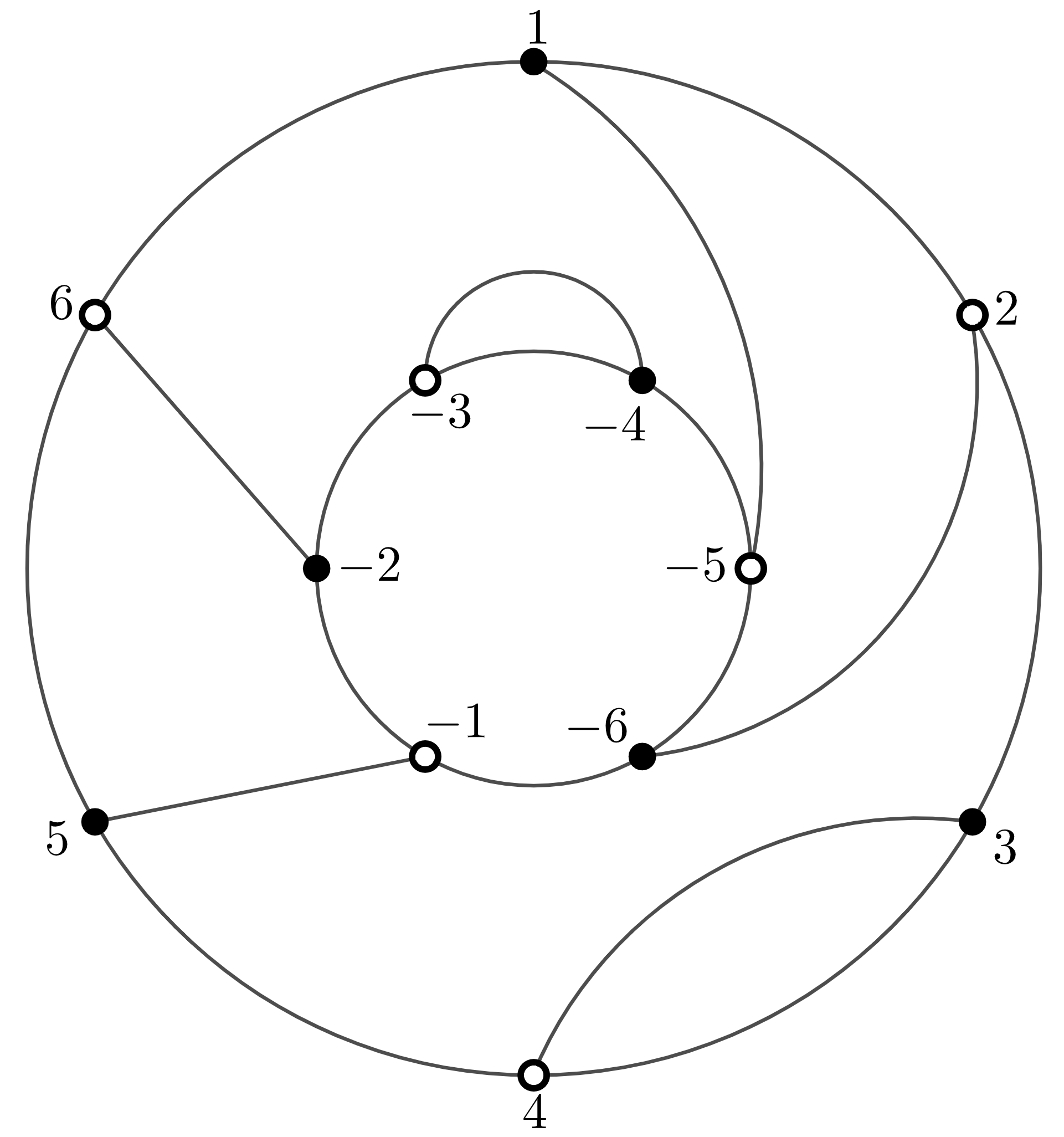}
    \caption{Bipartite, symmetric, non-crossing annular pairing.} \label{fig11b}
    \end{subfigure}
    \caption{The bipartite non-crossing annular pairing of (a) pairs odd labels coloured black with even labels coloured white; it corresponds to the toroidal ribbon graph $(1,4)(2,5)(3,6)$. In (b), we continue with the example of Figure \ref{fig7} by assigning the colours black and white to the labels of the non-crossing annular pairing of Figure \ref{fig7d} in accordance with the definitions of $B(3),W(3)$; this non-crossing annular pairing is bipartite as every black label is paired with a white one.}
    \label{fig11}
\end{figure}

\subsection{Laguerre Ensemble Moments in Terms of Non-Crossing Annular Permutations}

We conclude this paper with some comments on the the relation between the corrections to the LOE and LUE moments and non-crossing annular permutations.

\begin{definition}[Non-crossing annular permutations]
Let $n\in\N$, $p\in[n]$, and recall the sets of Definition \ref{def5}. We define
\begin{align*}
\NC^{\delta,p}(n,-n)&:=\{\pi\in\NC^\delta(n,-n)\,\vert\,\#(\pi)=2p\},
\\\NC^{{\rm T},p}(n)&:=\bigcup_{1\le u<v<n}\left\{\pi\in\NC\left(1_n(u-1,v)\right)\,\middle\vert\,\begin{array}{l}\pi^{-1}(u)=v,\,\#(\pi)=p,
\\\textrm{and for all }a\in[u-1],
\\\pi^{-1}(a)\in[n]\setminus\{u,\ldots,v\}\end{array}\right\},
\\\NC^{{\rm K},p}(n)&:=\bigcup_{1\le u<v<n}\left\{\pi\in\NC^\delta(\widetilde{1}_{n,u,v})\,\middle\vert\,\begin{array}{l}\pi(u)=-v,\,\#(\pi)=2p,\,\textrm{and}
\\\textrm{for all }a\in[u-1],\,\pi^{-1}(a)\in[n]\end{array}\right\}.
\end{align*}
\end{definition}
\begin{proposition}[Bijections for $\widehat{a}_{1,p}(n)$, $\widehat{b}_{1,p}(n)$, and $\widehat{b}_{2,p}(n)$]
Let $n\in\N$ and $p\in[n]$. Then, we have $\widehat{a}_{1,p}(n)=\NC^{{\rm T},p}(n)$ and that the following maps are bijections
\begin{align*}
\widehat{\varphi}_1:\widehat{b}_{1,p}(n)&\to\NC^{\delta,p}(n,-n),
\\\tau_1&\mapsto\tau_1\tau_0,
\\\widehat{\varphi}_2:\widehat{b}_{2,p}(n)&\to\NC^{{\rm K},p}(n),
\\\tau_1&\mapsto\tau_1\tau_0.
\end{align*}
\end{proposition}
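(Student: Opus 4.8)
The plan is to treat these three claims as the permutation (combinatorial-hypermap) counterparts of Propositions~\ref{prop6}, \ref{prop5} and \ref{prop7}, decorated with the same black/white constraints that entered the preceding proposition for the bipartite pairings. Concretely, I would proceed along one of two equivalent routes: either transcribe the proofs of Propositions~\ref{prop5}--\ref{prop7} with a general permutation $\pi$ replacing a pairing, carrying the extra parameter $p$ along at each step, or deduce the statements by composing the equivalences of Lemma~\ref{lemma3} with the preceding proposition, once one has checked that the relabelling used in the proof of Lemma~\ref{lemma3} conjugates the maps $\tau_1\mapsto\tau_1\tau_0$ into the corresponding ``merge each white half-edge pair to a white vertex'' operation on the annular-pairing side. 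I would write out the first route, as it is self-contained.

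For the equality $\widehat a_{1,p}(n)=\NC^{{\rm T},p}(n)$ I would repeat the proof of Proposition~\ref{prop6} almost verbatim, reading $\pi^{-1}$ for $\pi$ in accordance with the definition of $\NC^{{\rm T},p}(n)$ (this is forced by the orientation convention of Definition~\ref{def11}). The genus-one hypothesis gives $\#(\pi)+\#(\pi^{-1}1_n)=n-1$, so $\pi$ is crossing with respect to $1_n$; choosing $u$ to be the minimal label involved in such a crossing and $v$ via $\pi^{-1}(u)=v$, multiplication by $(u-1,v)$ still splits one cycle of $\pi^{-1}1_n$ because $\pi^{-1}1_n(u-1)=\pi^{-1}(u)=v$, and the arguments for $\pi\vee 1_n(u-1,v)=1_{[n]}$ (via a through string) and for the no-through-string property on $[u-1]$ (via minimality of $u$) carry over unchanged. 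Since $p=\#(\pi)$ is intrinsic to $\pi$, the index matches automatically, and the reverse inclusion is the same computation run backwards.

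For the bijections $\widehat\varphi_1$ and $\widehat\varphi_2$, both of the form $\tau_1\mapsto\tau_1\tau_0$, I would copy the proofs of Propositions~\ref{prop5} and \ref{prop7}. The one structural change is that $\tau_1$ is no longer an involution, so the relation $\tau_0\tau_1=\tau_1\tau_0$ exploited there must be replaced by $\tau_0\tau_1\tau_0=\tau_1^{-1}$, equivalently $\tau_1\tau_0=\tau_0\tau_1^{-1}$. From $\tau_0^2=e$ this still yields $(\tau_1\tau_0)^2=e$, and the $\delta$-type symmetry of the cycles of $\tau_1\tau_0$, together with the absence of any cycle $(-r,r)$, can be read off from the $\delta$-symmetric cycle structure of $\tau_1$ (which is exactly what $\tau_0\tau_1\tau_0=\tau_1^{-1}$ encodes) and from the fixed-point-freeness of $\tau_0\tau_1$. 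The cycle-count identities are unchanged: writing $\widetilde 1_n=\tau_2\tau_0$ and using conjugation-invariance of $\#(\cdot)$ one gets $\#((\tau_1\tau_0)^{-1}\widetilde 1_n)=\#(\tau_2\tau_1)$, and for $\widehat\varphi_2$ the same transposition telescoping as in Proposition~\ref{prop7} produces $\#((\tau_1\tau_0)^{-1}\widetilde 1_{n,u,v})$. Injectivity is immediate from $\tau_0^2=e$, and surjectivity follows by checking, exactly as in Propositions~\ref{prop5} and \ref{prop7}, that $\pi\mapsto\pi\tau_0$ lands in $\widehat b_{1,p}(n)$, respectively $\widehat b_{2,p}(n)$.

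The hard step is, as in Proposition~\ref{prop7}, the transitivity assertion $\widehat\varphi_2(\tau_1)\vee\widetilde 1_{n,u,v}=1_{\pm[n]}$: I would argue by contradiction that if the through strings $(u,-v)$ and $(-u,v)$ crossed nothing, then the restriction of $\widehat\varphi_2(\tau_1)$ to a block $A$ of $\widetilde 1_{n,u,v}$ would be crossing with respect to the induced annular permutation $\sigma_A$, contradicting the annular inequality \cite[Eq.~(2.10)]{mingo2004annular}; that inequality and the whole argument are stated for arbitrary permutations, not just pairings, so nothing beyond notation needs changing. The one genuinely new bookkeeping is matching the parameter $p$ across the bijection, i.e.\ checking that $\#(\tau_1\tau_0)$, restricted appropriately to the black and white parts, equals $2p$; I expect this to be the real point of friction, and I would handle it through the precise correspondence of Lemma~\ref{lemma3} between the cycles of $\tau_1$ and the boundary components of the underlying bipartite ribbon graph, in parallel with the treatment of the black/white constraints in the proof of the preceding proposition.
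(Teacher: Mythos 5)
Your plan diverges from the paper at the outset: the paper explicitly declines to give a formal algebraic proof of this proposition and instead argues topologically (bicolouring, half-edge identification shrinking white areas to vertices, gluing to the double cover, and cutting along a carefully chosen edge), with the only genuinely delicate points being where to cut and the parity arguments showing that a suitable \emph{odd} label $u$ always exists. A correct algebraic proof would be a legitimate alternative, but as sketched your transcription has two concrete breakdowns. First, for $\widehat{a}_{1,p}(n)=\NC^{{\rm T},p}(n)$ you claim the crossing/through-string steps of Proposition \ref{prop6} ``carry over unchanged''. They do not: the step there that invokes Remark \ref{rmk3} (positive genus forces two cycles $(u,v),(u',v')$ with $u<u'<v<v'$) is special to pairings. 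A genus-one \emph{permutation} can be a single cycle whose underlying partition is non-crossing, e.g.\ $\pi=(1,3,2)\in\widehat{a}_{1,1}(3)$, so ``the minimal label involved in such a crossing'' is not defined by that argument, and the transitivity $\pi\vee 1_n(u-1,v)=1_{[n]}$ can no longer be furnished by a second crossing cycle. Locating the cut $(u,v)$ for general permutations is exactly the subtlety the paper's topological discussion is devoted to (choice of minimal odd $u$, plus existence arguments); your sketch assumes it away.

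Second, for $\widehat\varphi_1$ and $\widehat\varphi_2$ your own substitution of $\tau_0\tau_1\tau_0=\tau_1^{-1}$ for commutation undercuts the rest of the plan: that relation forces $(\tau_1\tau_0)^2=e$, and together with fixed-point-freeness of $\tau_0\tau_1$ it makes $\tau_1\tau_0$ a fixed-point-free involution on $\pm[n]$, hence a pairing with exactly $n$ cycles. Consequently the verifications you propose to copy from Propositions \ref{prop5} and \ref{prop7} cannot go through as stated: the target sets $\NC^{\delta,p}(n,-n)$ and $\NC^{{\rm K},p}(n)$ demand $\#(\pi)=2p$ and contain permutations with cycles of length at least three, which no verbatim image $\tau_1\tau_0$ of a Definition \ref{def11} element can match, so neither the containment checks nor surjectivity transcribe. (You can see the mismatch already on the paper's running example: the hypermap of Figure \ref{fig12a}, extracted via Lemma \ref{lemma3} from $\tau_1=(1,5)(-1,-5)(2,6)(-2,-6)(3,-4)(-3,4)$, has its naive product with $\tau_0$ containing a cycle of the form $(r,-r)$, which is excluded from $\NC^\delta$.) The real content here is how the $2p$ white-vertex cycles of the hypermap become the cycles of the annular permutation, i.e.\ the half-edge identification of Lemma \ref{lemma3}; the paper supplies exactly this through its topological argument, whereas you defer it as ``bookkeeping'' at the very point where the algebraic reading needs to be repaired. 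So the proposal, as written, would not produce a proof without substantial new work on both fronts.
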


\begin{remark}
Since the annulus is planar and oriented and our permutations are non-crossing, we do not need to remember the ordering of the cycles of our permutations, as this (anticlockwise) ordering is always equivalent to that inherited from the structure of the annulus itself. Thus, our non-crossing annular permutations are precisely equivalent to non-crossing annular partitions.
\end{remark}

We do not give a formal algebraic proof of this proposition, opting instead to present topological arguments that are perhaps more instructive; see \cite{mingob2025} for $\widehat{\varphi}_1$. In general, there are two approaches:
\begin{enumerate}
\item One may draw a ribbon graph on the appropriate fundamental polygon, checking that it is bipartite, then use the arguments of Section \ref{s2} to transform it into a bipartite non-crossing annular pairing. Then, identifying positive odd labels $a$ with $a+1$ and negative odd labels $b$ with $b-1$ causes the areas bounded between the relevant edges to shrink to (white) vertices, thereby resulting in non-crossing annular permutations.
\item On the other hand, one may perform half-edge identification at the ribbon graph stage to form combinatorial hypermaps, then use topological arguments similar to those of Section \ref{s2} to form the desired non-crossing annular permutations.
\end{enumerate}

\begin{figure}[H]
    \centering
    \begin{subfigure}{0.33\textwidth}
    \centering
    \captionsetup{justification=centering}
    \includegraphics[width=0.8\textwidth]{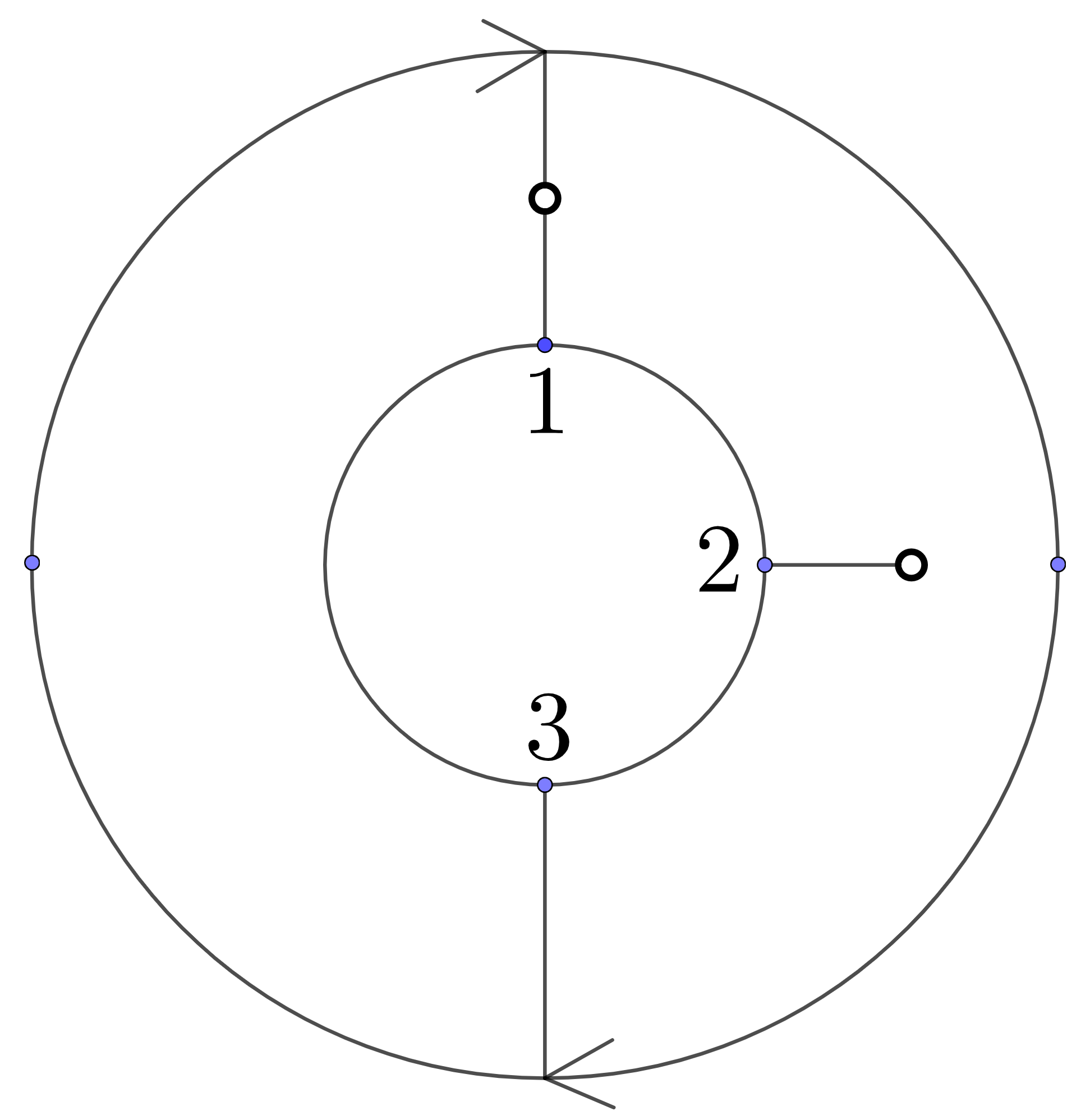}
    \caption{Hypermap on the real projective plane.} \label{fig12a}
    \end{subfigure}\qquad\qquad
    \begin{subfigure}{0.33\textwidth}
    \centering
    \captionsetup{justification=centering}
    \includegraphics[width=0.8\textwidth]{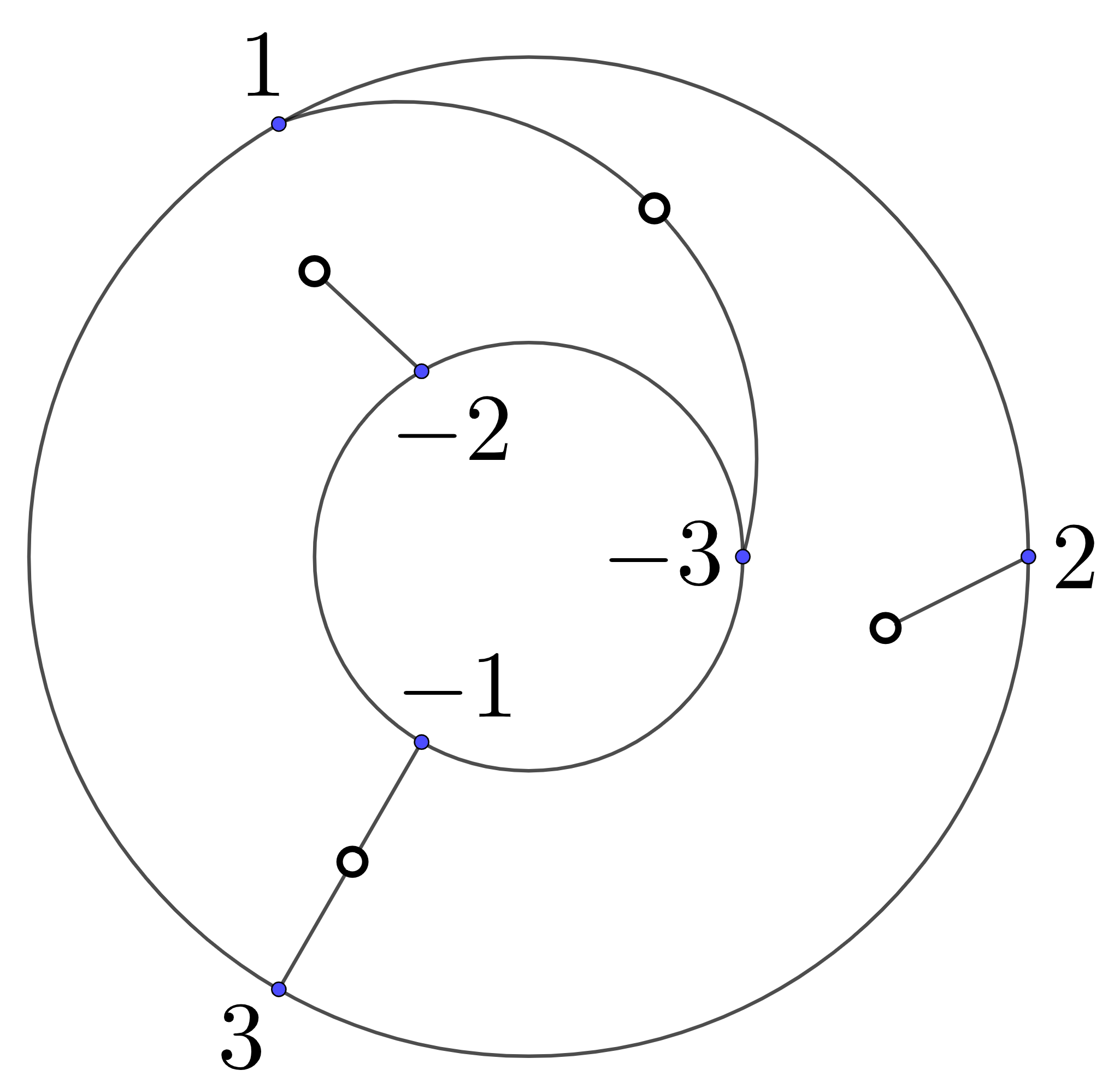}
    \caption{Symmetric non-crossing annular permutation.} \label{fig12b}
    \end{subfigure}
    \caption{Continuing once again with the example of Figure \ref{fig7}, we identify the half-edges of the ribbon graph in Figure \ref{fig7a} and relabel them so that $1\equiv2\mapsto1$, $3\equiv4\mapsto2$, and $5\equiv6\mapsto3$ in order to form (a). Then, gluing this hypermap to an inverted copy of itself yields the symmetric non-crossing annular permutation in (b). Alternatively, we may begin at Figure \ref{fig11b} and identify half-edges there to obtain (b) above.}
    \label{fig12}
\end{figure}

To give a little more detail, let us first consider the case of bipartite ribbon graphs on the real projective plane, i.e., those belonging to $\widetilde{b}_{1,p}(n)$. Given such a ribbon graph, we first draw its restriction to $[2n]\cong\pm[2n]/\tau_0$ on the fundamental polygon of the real projective plane, as in Figure \ref{fig7a}. Then, as the ribbon graph is bipartite, we are able to consistently colour the areas enclosed by the edges such that for each $u\in[2n]$ odd, the area between the half-edges labelled $u$ and $u+1$ is coloured white, while that between those labelled $u$ and $u-1$ (with $0\equiv 2n$) is coloured black. Doing so and then identifying all half-edges labelled by odd $u\in[2n]$ with those labelled $u+1$ shrinks the white areas down to vertices of the corresponding hypermap of $\widehat{b}_{1,p}(n)$, still drawn on the fundamental polygon of the real projective plane; see Figure \ref{fig12a}. Then, we may proceed as in \S\ref{s2.1} and glue said ribbon graph to its inverted copy to form its double cover and realise the result as a hypermap on an annulus, i.e., a non-crossing annular permutation; see Figure \ref{fig12b}.

Alternative to the above procedure, one may instead follow the algorithm of \S\ref{s2.1} entirely to arrive at a symmetric non-crossing annular pairing. Then, one is once again able to bicolour and identify half-edges in the manner described above to produce the desired non-crossing annular permutation.

\begin{figure}[H]
    \centering
    \begin{subfigure}{0.49\textwidth}
        \centering
        \captionsetup{justification=centering}
        \includegraphics[width=0.5\textwidth]{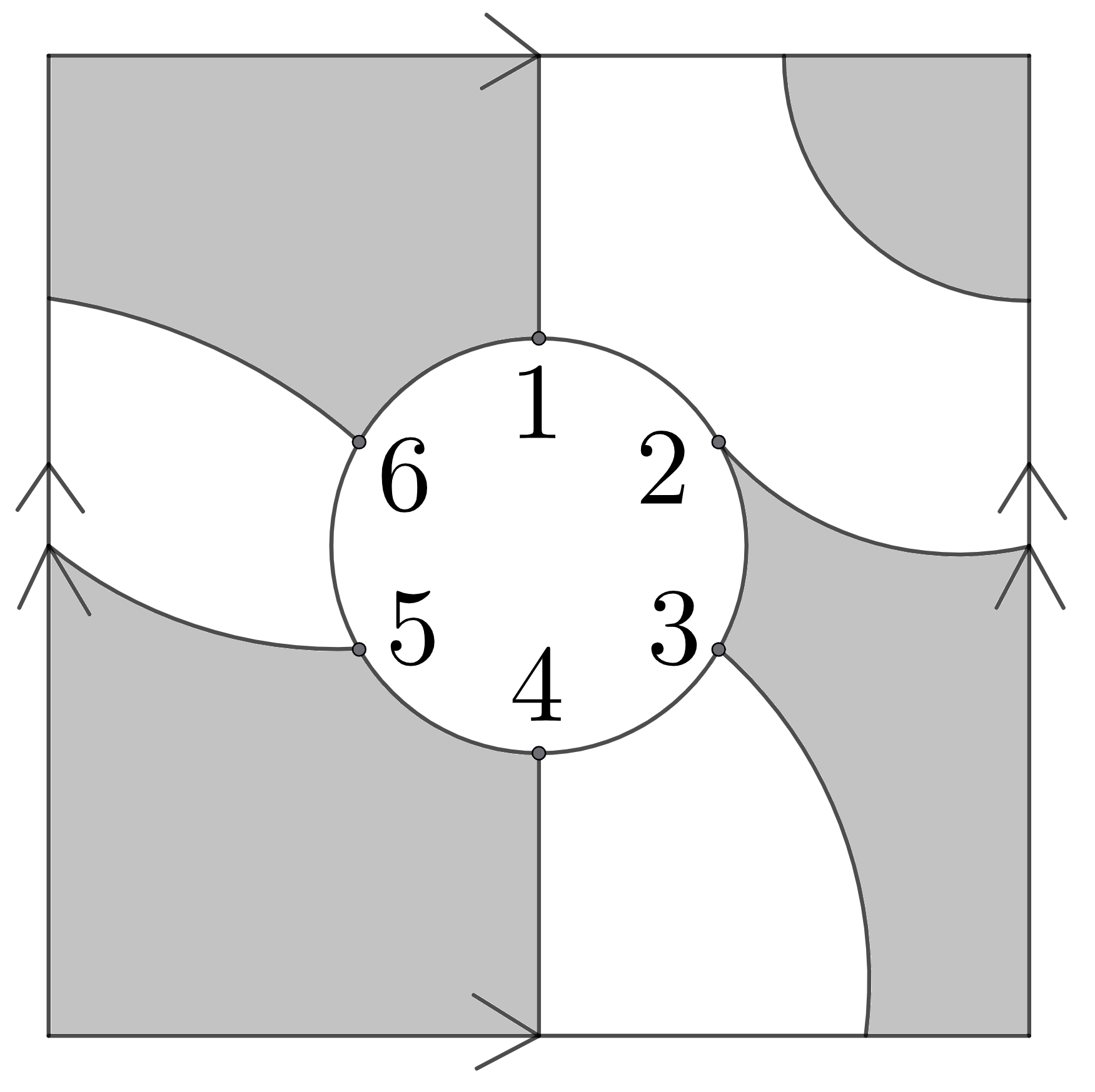}
        \caption{Bipartite, toroidal ribbon graph.} \label{fig13a}
    \end{subfigure}\hfill
    \begin{subfigure}{0.49\textwidth}
        \centering
        \captionsetup{justification=centering}
        \includegraphics[width=0.5\textwidth]{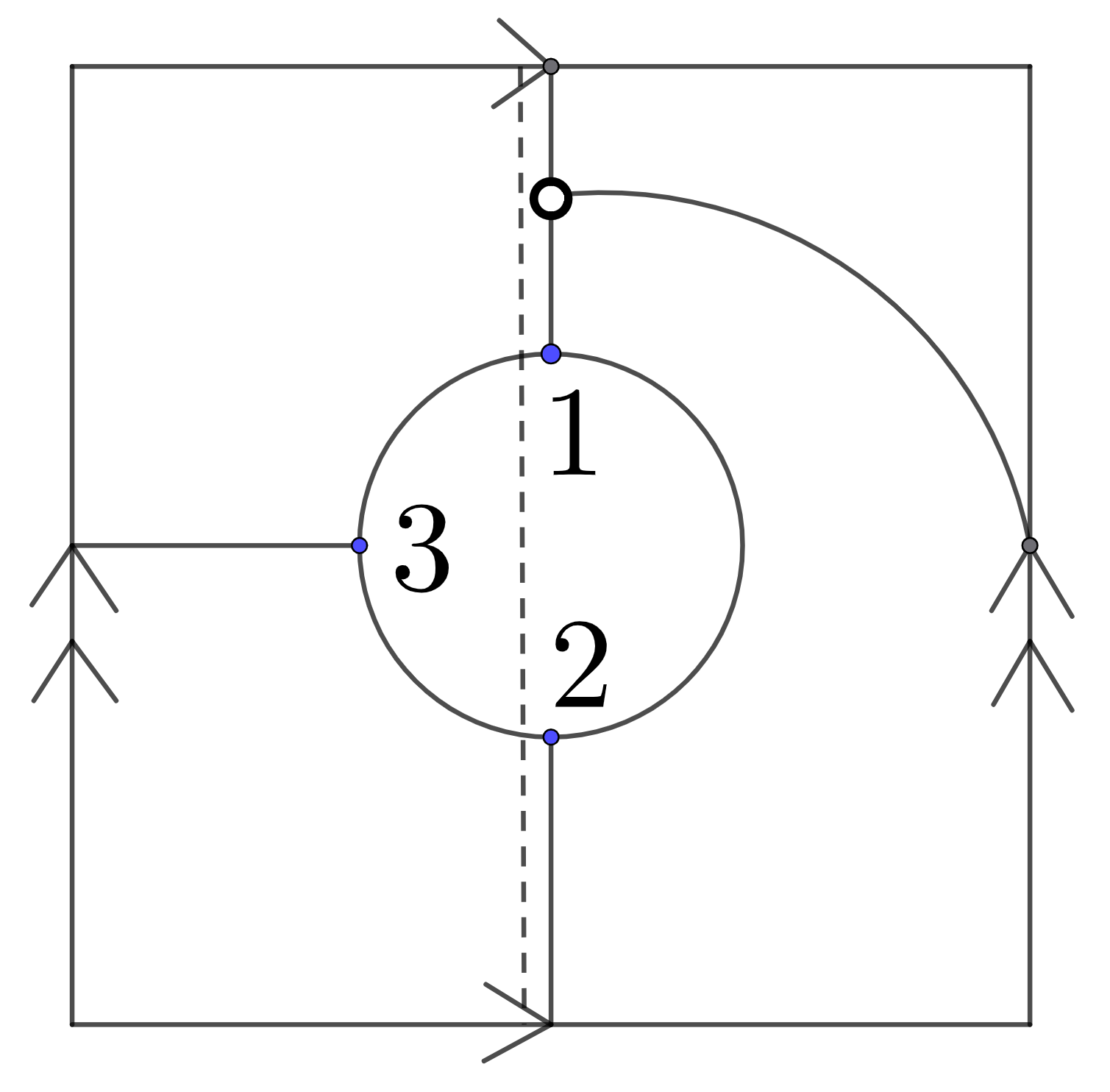}
        \caption{Toroidal hypermap.} \label{fig13b}
    \end{subfigure}

    \begin{subfigure}{0.49\textwidth}
        \centering
        \captionsetup{justification=centering}
        \includegraphics[width=0.6\textwidth]{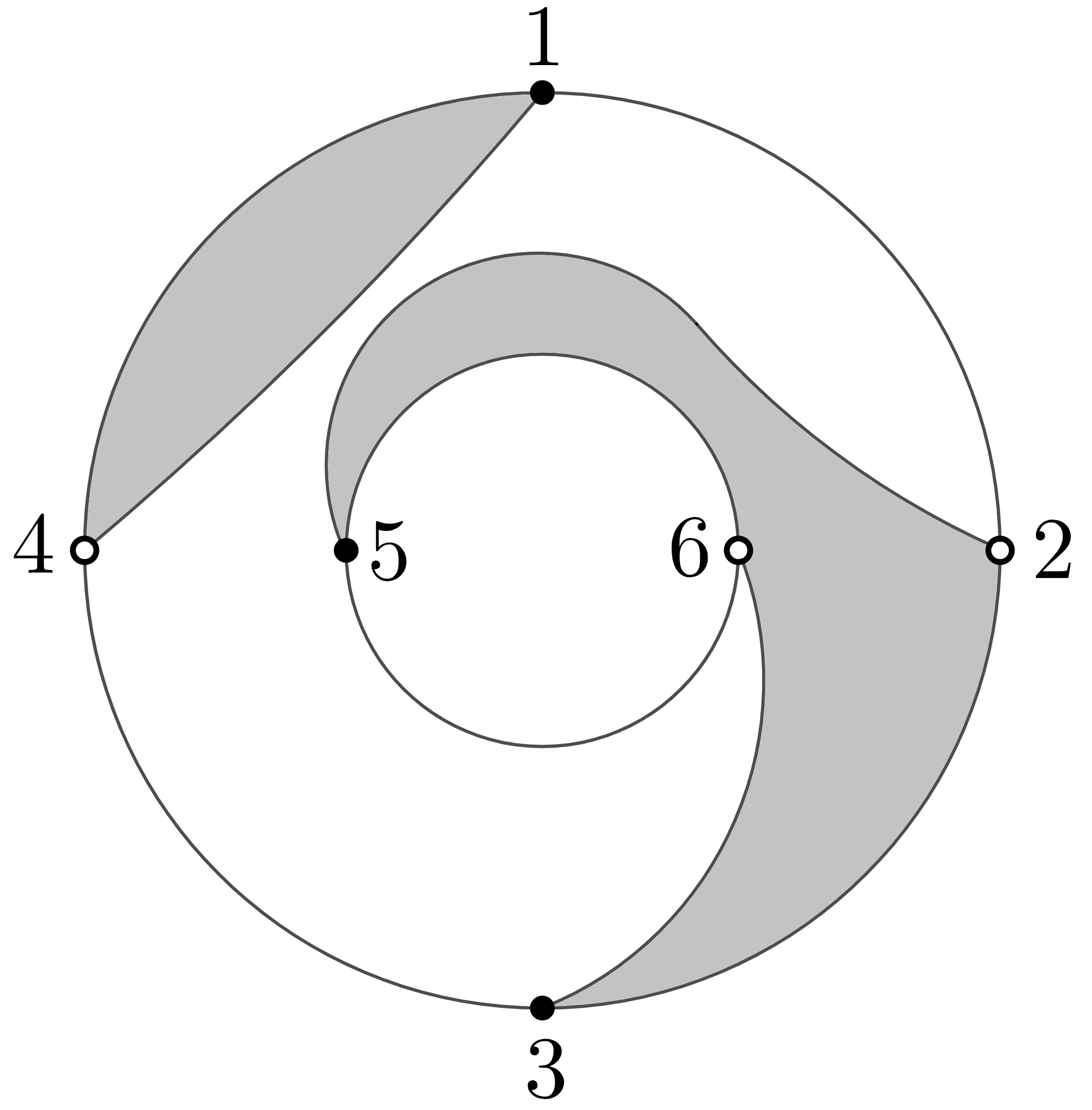}
        \caption{Bipartite non-crossing annular pairing.} \label{fig13c}
    \end{subfigure}\hfill
    \begin{subfigure}{0.49\textwidth}
        \centering
        \captionsetup{justification=centering}
        \includegraphics[width=0.5\textwidth]{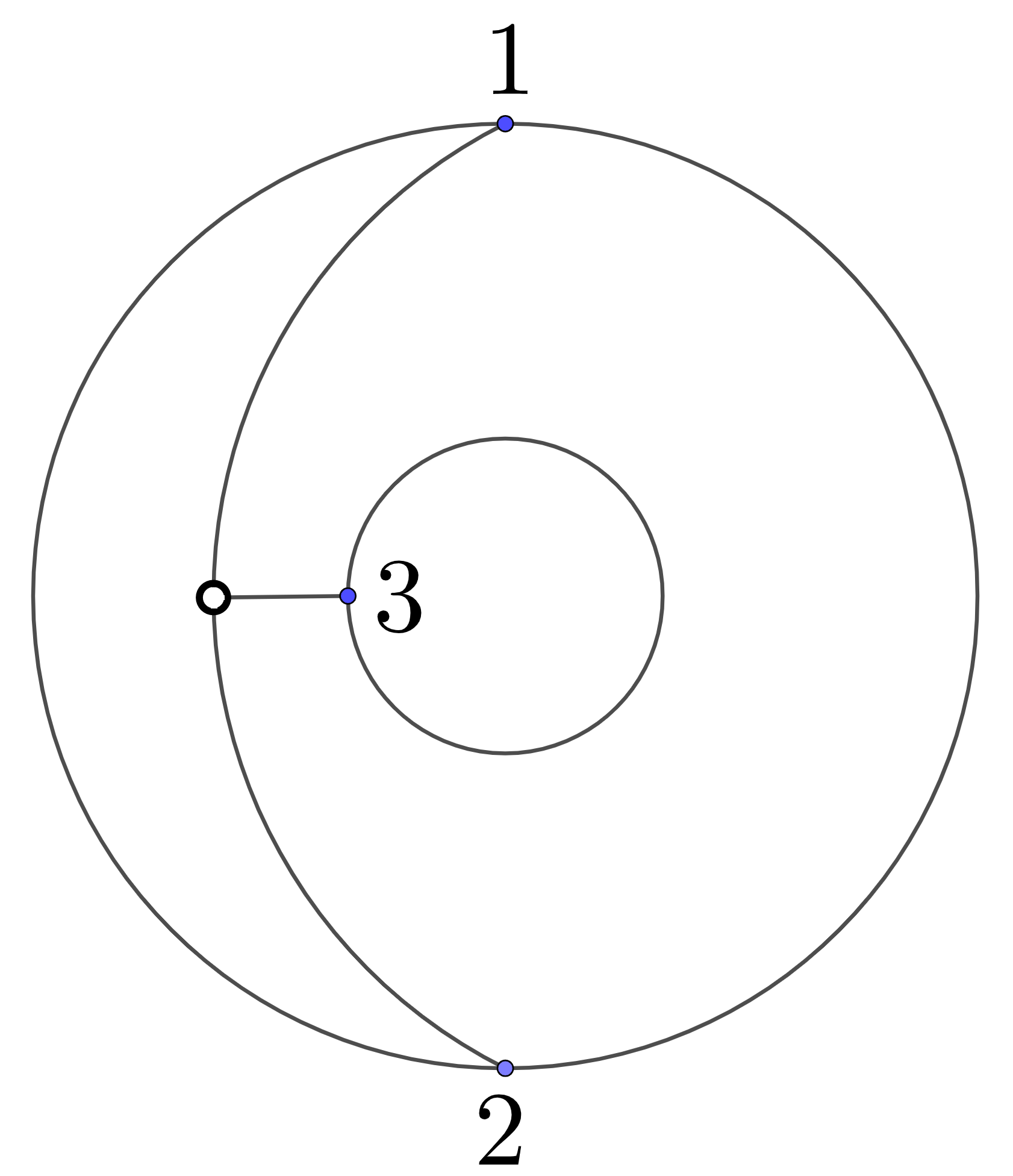}
        \caption{Non-crossing annular permutation.} \label{fig13d}
    \end{subfigure}
    \caption{Here, we make explicit in (a) the bipartite ribbon graph $(1,4)(2,5)(3,6)$ referred to in Figure \ref{fig11}. Identifying and relabelling the half-edges according to $1\equiv2\mapsto1$, $3\equiv4\mapsto2$, $5\equiv6\mapsto3$ shrinks the white areas to vertices and produces the hypermap (b). Cutting this along the dotted line and identifying the left and right sides produces (d). Alternatively, we may arrive at (d) by bicolouring the non-crossing annular pairing of Figure \ref{fig11a} to obtain (c) and then identify half-edges.} \label{fig13}
\end{figure}

Moving on, let us now consider bipartite ribbon graphs on the torus, i.e., those belonging to $\widetilde{a}_{1,p}(n)$. As in the case of $\widetilde{b}_{1,p}(n)$, we are able to identify half-edges of the ribbon graph to form a hypermap on the torus, then cut and reglue as prescribed in \S\ref{s2.2} to form the desired non-crossing annular permutation. Moreover, we are able to instead begin at the non-crossing annular pairing produced by following the algorithm of \S\ref{s2.2} and then identify half-edges to get the same annular permutation. However, there are two closely related subtleties to consider, those being of where to cut the hypermap on the torus and of the fact that if the minimal label, say $u$, on the outer circle of a non-crossing annular pairing is even, we would need to understand how to identify this half-edge with $u-1$, which lies on the inner circle. Both of these issues can be resolved by defining $u\in[2n]$ to be the minimal \textit{odd} label such that $(u,\pi(u))$ is a crossing cycle of our ribbon graph $\pi\in\widetilde{a}_{1,p}(n)$ with $\pi(u)>u$ --- indeed, $u$ was only chosen to be minimal in \S\ref{s2.2} to ensure our mapping was a bijection, but the choice of $u$ could be made unambiguous in many ways. We explain momentarily why there must exist an odd $u$ as described above, but for now, we proceed by assuming existence of said $u$. Then, drawing our ribbon graph on the torus with the edge $(u,\pi(u))$ being vertical and cutting along the left of said edge produces a non-crossing annular pairing that is amenable to being transformed into a non-crossing annular permutation through the half-edge identification mechanism described above. Furthermore, if we opt to first identify half-edges of our ribbon graph to form a hypermap on the torus, we simply need to cut along the left of the half-edge corresponding to $u$ (now labelled $(u+1)/2$) and continue along (passing the white vertex and going through the boundary of the fundamental polygon of the torus) until arriving at the half-edge corresponding to $\pi(u)$ (now labelled $\pi(u)/2$). We give an example in Figure \ref{fig13}.

In the above, we assumed that given $\pi\in\widetilde{a}_{1,p}(n)$, there always exists a minimal odd $u\in[2n]$ such that $(u,\pi(u))$ is crossing and $\pi(u)>u$. We now prove the veracity of this assumption. First, note that by the definition of $\widetilde{a}_{1,p}(n)$, one has by the same argument as in the first observation in the proof of Proposition \ref{prop6} that there exist $1\le a<a'<b<b'\le 2n$ such that $(a,b),(a',b')\in\pi$. Let $a,a'$ be minimal such that this property holds. If either $a,a'$ are odd, then the set of odd $u'\in[2n]$ such that $(u',\pi(u'))$ is crossing with $\pi(u')>u'$ is non-empty and we may set $u$ as the minimal element of this set. Otherwise, we have a parity argument: Suppose that $a,a'$ are both even. Then, since $\pi$ is bipartite, $b,b'$ must both be odd. Then, the set $\{a+1,\ldots,b-1\}\setminus\{a'\}$ contains one more odd label than even label. As odd labels must pair with even ones, at least one of the odd labels in $\{a+1,\ldots,b-1\}\setminus\{a'\}$, say $c$, must be such that $(c,\pi(c))$ crosses with $(a,b)$. Since $a\in[2n]$ is the minimal half-edge participating in a crossing edge, we must have that $\pi(c)>b>c$, and we are done.

Finally, let us consider the set $\widetilde{b}_{2,p}(n)$ of bipartite ribbon graphs on the Klein bottle. As this case is treated by combining the ideas used in the previous two settings, it should come as no surprise that the algorithm for obtaining a non-crossing annular permutation from a ribbon graph $\tau_1\in\widetilde{b}_{2,p}(n)$ is much the same as just discussed. Thus, we draw the restriction of $\tau_1$ to $[2n]\cong\pm[2n]/\tau_0$ on the fundamental polygon of the Klein bottle, as in Figure \ref{fig14a}, and then proceed in one of two ways:
\begin{enumerate}
\item Glue the ribbon graph to its inverted copy (along the bottom boundary of the original and the top boundary of the copy) to form a graph on a torus, cut along an appropriate (vertical) edge and reglue along the left and right boundaries to form a bipartite non-crossing annular pairing, and then pairwise identify half-edges to produce the desired non-crossing annular permutation.
\item Identify half-edges to form a hypermap on the Klein bottle, glue this hypermap to its inverted copy to form a hypermap on the torus, and then cut along the appropriate half-edges and reglue as needed to arrive yet again at the desired non-crossing annular permutation.
\end{enumerate}

The key point is that the subtlety of the choice of half-edges to cut along that arose in the toroidal case has an analogue on the Klein bottle. This is resolved by taking $u$ to be the minimal odd element of $[2n]$ such that $\tau_1(u)\in[2n]$ and cutting the ribbon graph on the torus along the half-edges $\pm u,\pm\tau_1(u)$, with the cuts placed between $\pm u$ and $\pm(1-u)$ and between $\pm\tau_1(u)$ and $\pm(\tau_1(u)-1)$; for cutting the hypermap on the torus, we cut along $(u+1)/2$ and follow the permutation in the natural way. We work through one final example in the following figure.

\begin{figure}[H]
    \centering
    \begin{subfigure}{0.32\textwidth}
        \centering
        \captionsetup{justification=centering}
        \includegraphics[width=0.9\textwidth]{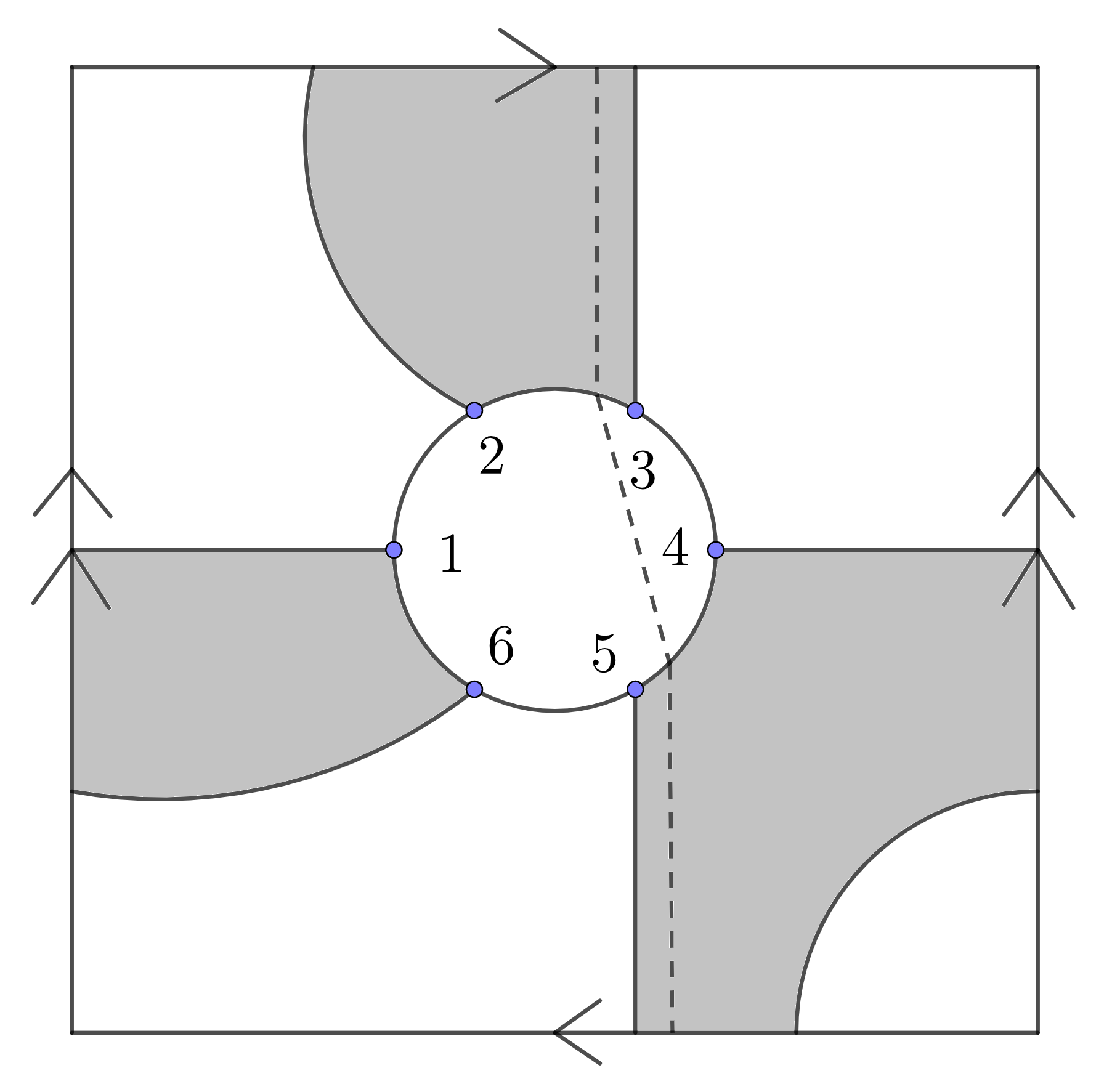}
        \caption{Bipartite ribbon graph on the Klein bottle.} \label{fig14a}
    \end{subfigure}\hfill
    \begin{subfigure}{0.32\textwidth}
        \centering
        \captionsetup{justification=centering}
        \includegraphics[width=0.9\textwidth]{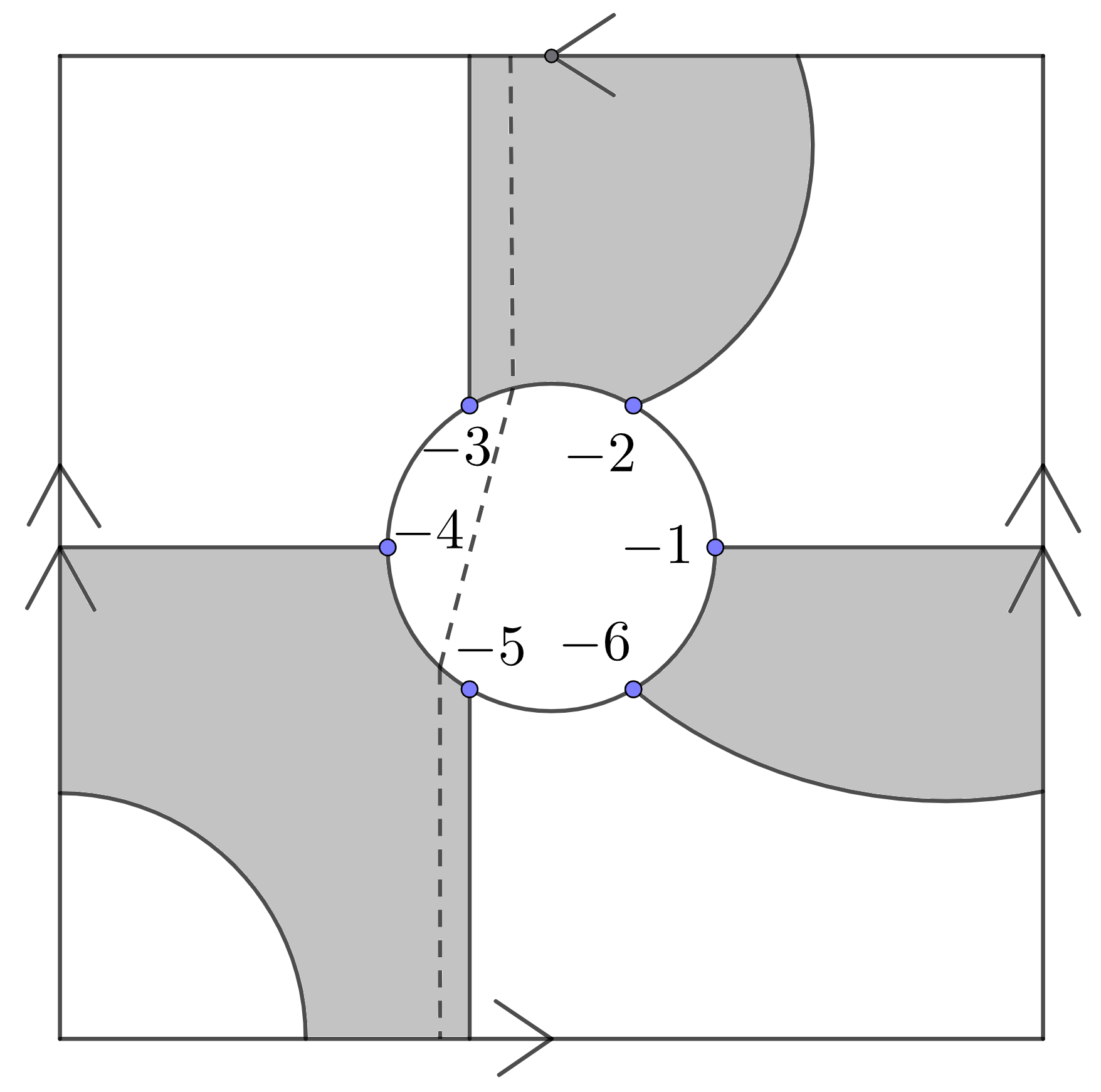}
        \caption{Inverted copy of the bipartite ribbon graph.} \label{fig14b}
    \end{subfigure}\hfill
    \begin{subfigure}{0.32\textwidth}
        \centering
        \captionsetup{justification=centering}
        \includegraphics[width=0.9\textwidth]{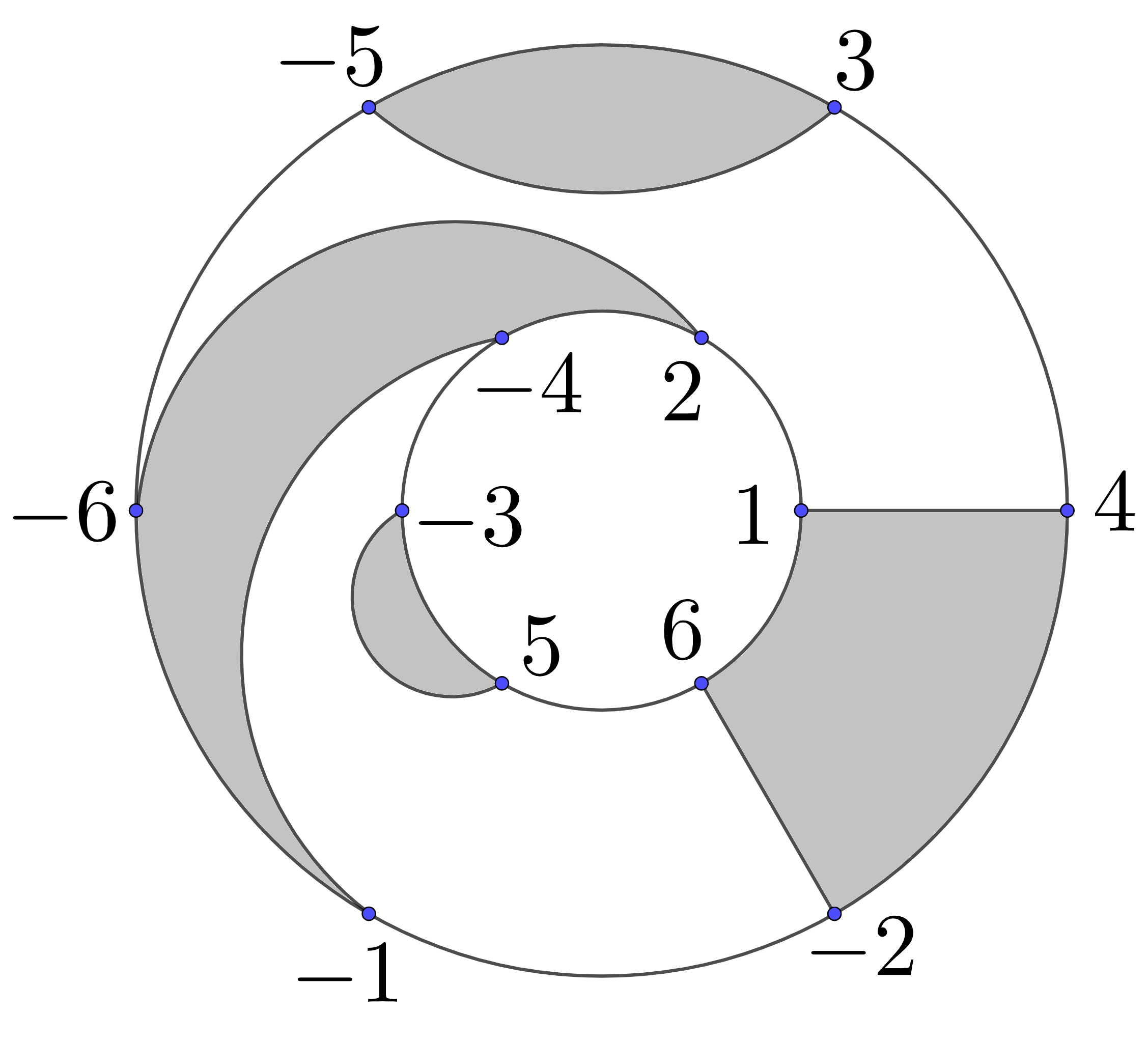}
        \caption{Bipartite non-crossing annular pairing.} \label{fig14c}
    \end{subfigure}

    \begin{subfigure}{0.32\textwidth}
        \centering
        \captionsetup{justification=centering}
        \includegraphics[width=0.9\textwidth]{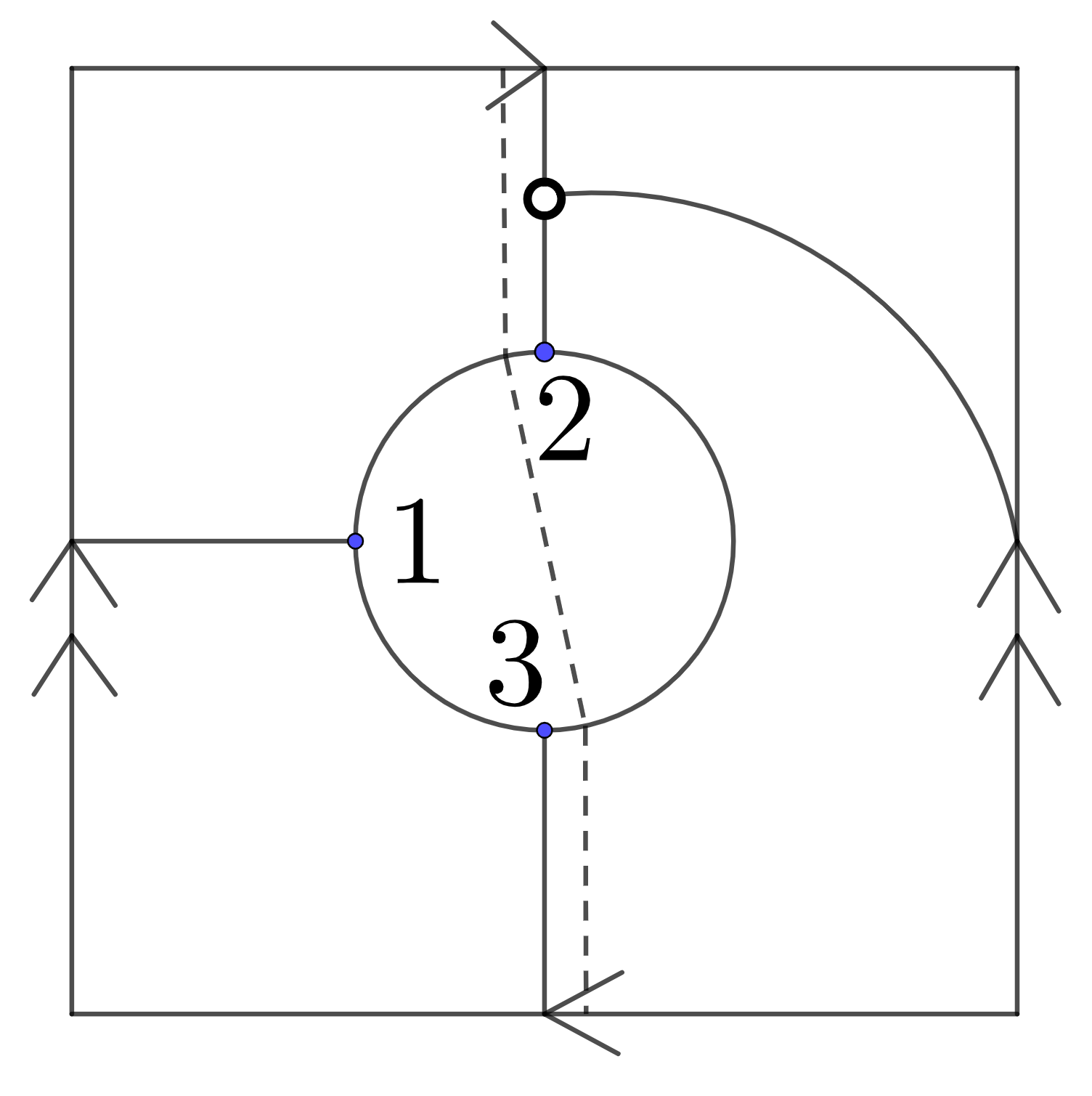}
        \caption{Hypermap on the Klein bottle.} \label{fig14d}
    \end{subfigure}\hfill
    \begin{subfigure}{0.32\textwidth}
        \centering
        \captionsetup{justification=centering}
        \includegraphics[width=0.9\textwidth]{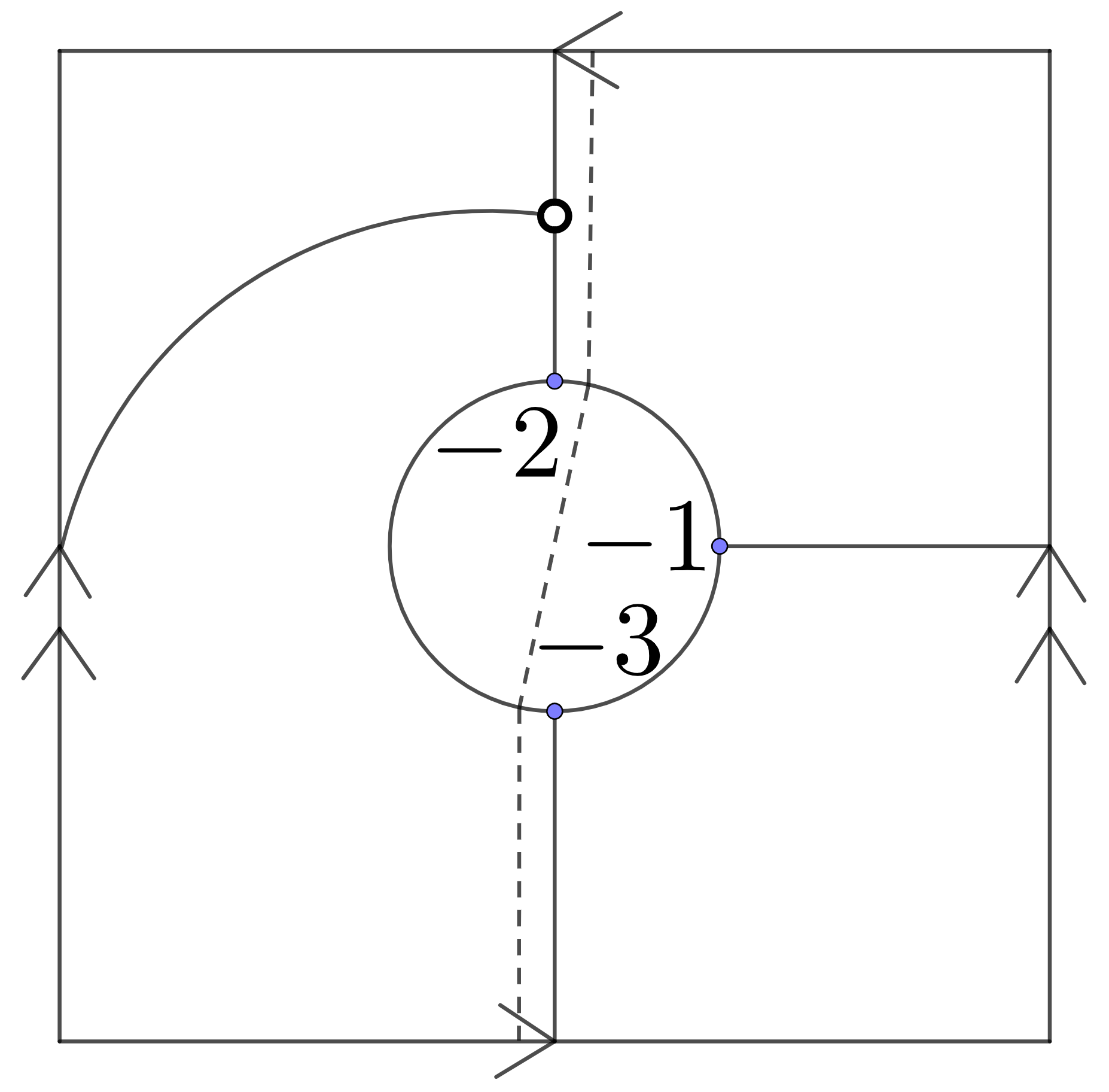}
        \caption{Inverted copy of the hypermap.} \label{fig14e}
    \end{subfigure}\hfill
    \begin{subfigure}{0.32\textwidth}
        \centering
        \captionsetup{justification=centering}
        \includegraphics[width=0.9\textwidth]{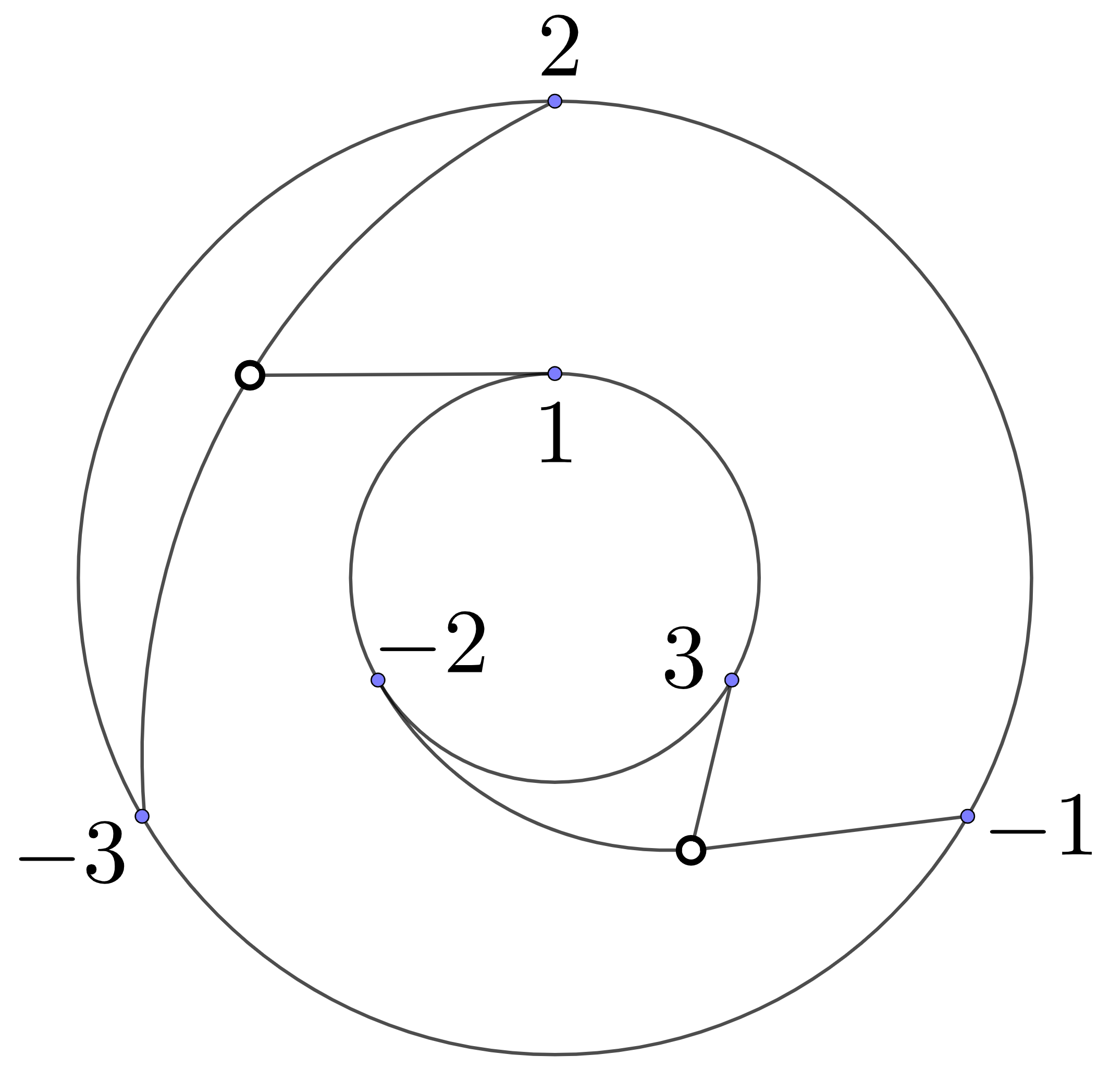}
        \caption{Non-crossing annular permutation.} \label{fig14f}
    \end{subfigure}
    \caption{Although $2\in[6]$ is minimal such that the edge of (a) starting at it traverses the upper boundary, indicating that $\tau_1(2)=6>0$, we will cut along $u=3$, as it is the minimal \textit{odd} element of $[6]$ that maps to $[6]$ under $\tau_1$. Gluing (a) to its inverted copy (b), then cutting along $\pm u,\pm\tau_1(u)$ produces (c). Identifying half-edges then yields (f). Alternatively, we may identify half-edges in (a) to produce (d), glue it to its inverted copy (e), and then cut along the indicated line to form (f).} \label{fig14}
\end{figure}

It remains to justify our assumption in the above that there always exists an odd $u\in[2n]$ such that $\tau_1(u)\in[2n]$. Thus, return to the definition of $\widetilde{b}_{2,p}(n)$ and note that there must exist some $u\in[2n]$ such that $\tau_1(u)\in[2n]$; take $u$ henceforth to be the minimal element of $[2n]$ with this property. If $u$ is odd, we are done. Otherwise, suppose for the sake of contradiction that every odd element $u'$ of $[2n]$ is such that $\tau_1(u')\in-[2n]$. Then, as $\tau_1$ is bipartite, each of these $u'$ must in fact map to an element of $-2[2n]$ under $\tau_1$ so that $\tau_1$ pairs every positive element of $B(n)$ with a negative one. This is in contradiction with the definition of $u$, since the bipartiteness condition means that $\tau_1(u)\in1-2[2n]$, hence the symmetry $\tau_1\tau_0=\tau_0\tau_1$ requires that $\tau_1$ pairs together the two negative elements $-u,-\tau_1(u)\in B(n)$. Thus, there must exist some odd $u'\in[2n]$ such that $\tau_1(u')\in[2n]$ and we are able to have well-defined cuts in the algorithm outlined above.

\begin{acknowledgement}
The authors would like to thank Norman Do for valuable discussion on this project. The first two authors were supported by Hong Kong GRF 16304724 and 17304225. The research of the third author is supported by a Discovery Grant from the Natural Sciences and Engineering Research Council of Canada. This work was initiated during  the event \emph{Log-gases in Caeli Australi: Recent Developments in and around Random Matrix Theory} held at the MATRIX Institute during August 2025. Rahman also benefited from a visit to the University of Melbourne funded by the MATRIX--Simons Young Scholarship program.
\end{acknowledgement}

\bigskip

\end{document}